\newtheoremstyle{mystyle}
{11pt}				% above space 
{11pt}				% below space 
{\itshape}					% body font 
{}					% indent amount 
{\bfseries}			% head font 
{}					% post head punctuation 
{5.5pt}				% space between head and body
{}					% head spec
\newtheoremstyle{mystyle2}
{11pt}				% above space 
{11pt}				% below space 
{}					% body font 
{}					% indent amount 
{\bfseries}			% head font 
{}					% post head punctuation 
{5.5pt}				% space between head and body
{}					% head spec
\theoremstyle{mystyle}
\newtheorem{theorem}{Theorem}[section]
\newtheorem{definition}[theorem]{Definition}
\newtheorem{proposition}[theorem]{Proposition}
\newtheorem{corollary}[theorem]{Corollary}
\theoremstyle{mystyle2}
\newtheorem{example}[theorem]{Example}
\renewenvironment{proof}[1][Proof.]{\vspace{-11pt} \begin{trivlist}
		\item[\hskip \labelsep {\bfseries #1}]}{\qed \end{trivlist}}
\newcommand{\gap}{\vspace{11pt}}
\newcommand{\smallgap}{\vspace{5.5pt}}
\newcommand{\diag}{\operatorname{diag}}
\newcommand{\tr}{\operatorname{tr}}
\newcommand{\conv}{\operatorname{conv}}
\newcommand{\spn}{\operatorname{span}}
\newcommand{\Aut}{\operatorname{Aut}}
\newcommand{\cone}{\operatorname{cone}}
\newcommand{\R}{\mathcal{R}}
\newcommand{\Rn}{\mathcal{R}^n}
\newcommand{\Sn}{\mathcal{S}^n}
\newcommand{\Hn}{\mathcal{H}^n}
\newcommand{\V}{{\cal V}}
\newcommand{\W}{{\cal W}}
\newcommand{\A}{{\cal A}}
\newcommand{\G}{{\cal G}}
\newcommand{\h}{{\cal H}}
\newcommand{\lc}{\lambda(c)}
\newcommand{\ly}{\lambda(y)}
\newcommand{\lx}{\lambda(x)}
\newcommand{\lu}{\lambda(u)}
\newcommand{\lv}{\lambda(v)}
\newcommand{\abs}[1]{\left\vert #1 \right\vert}
\newcommand{\norm}[1]{\left\Vert #1 \right\Vert}
\newcommand{\ip}[2]{\left< #1, #2 \right>}
\newcommand{\starprec}{\mathrel{\begin{aligned}[t] \prec \\[-3.2ex] {\scriptstyle *\;} \end{aligned}}}
\title{\bf Commutativity, majorization, and reduction in \\
	Fan-Theobald-von Neumann systems 
}
\author{
	M. Seetharama Gowda\\
	Department of Mathematics and Statistics\\
	University of Maryland, Baltimore County\\
	Baltimore, Maryland 21250, USA\\
	gowda@umbc.edu\\
	and\\
	Juyoung Jeong \\
	Applied Algebra and Optimization Research Center \\
	Sungkyunkwan University \\
	2066 Seobu-ro, Suwon 16419, Republic of Korea \\
	jjycjn@skku.edu}
\date{\today}
\begin{document}

\maketitle

\begin{abstract}
	A Fan-Theobald-von Neumann system \cite{gowda-ftvn} is a triple $(\V,\W,\lambda)$, where $\V$ and $\W$ are real inner product spaces and $\lambda:\V \to \W$ is a norm-preserving map satisfying a Fan-Theobald-von Neumann type inequality together with a condition for equality. Examples include Euclidean Jordan algebras, systems induced by certain hyperbolic polynomials, and normal decompositions systems (Eaton triples). In \cite{gowda-ftvn}, we presented some basic properties of such systems and described results on optimization problems dealing with certain combinations of linear/distance and spectral functions. 
	We also introduced the concept of commutativity via the equality in the Fan-Theobald-von Neumann type inequality.
	In the present paper, we elaborate on the concept of commutativity and introduce/study automorphisms, majorization, and reduction in Fan-Theobald-von Neumann systems.
\end{abstract}

\vspace{1cm}
\noindent{\bf Key Words}: Fan-Theobald-von Neumann system, Euclidean Jordan algebra, normal decomposition system, Eaton triple, hyperbolic polynomial, spectral set, eigenvalue map, strong operator commutativity 
\\

\noindent{\bf AMS Subject Classification:} 
15A27, 17C20, 46N10, 52A41, 90C25, 90C33.

%%%%%%%%%%%%%%%%%%%%%%%%%%%%%%%%%%%%%%%%%%%%%%%%%%%%%%%%%%%%%%%%%%%%%%%%%%%
\section{Introduction}

A Fan-Theobald-von Neumann system (FTvN system, for short), introduced in \cite{gowda-ftvn}, is a triple $(\V,\W,\lambda)$, where $\V$ and $\W$ are real inner product spaces and $\lambda:\V \to \W$ is a norm-preserving map satisfying the property
\begin{equation} \label{intro ftvn}
	\max \Big\{ \ip{c}{x} : \, x \in [u] \Big\} = \ip{\lc}{\lu} \quad (c,u\in \V),
\end{equation}
with $[u]:=\{x\in \V: \lx=\lu\}$ denoting the so-called $\lambda$-orbit of $u\in \V$. This property is a combination of an inequality and a condition for equality, see Section 2 for details and an elaborated version. The inequality
\[ \ip{x}{y} \leq \ip{\lx}{\ly} \quad (x,y \in \V), \]
which comes from (\ref{intro ftvn}) will be called {\it Fan-Theobald-von Neumann inequality} and the equality
\[ \ip{x}{y} = \ip{\lx}{\ly} \]
defines the {\it commutativity} of $x$ and $y$ in this system. Examples of such systems include \cite{gowda-ftvn}: 

\begin{itemize}
	\item [$(a)$] The triple $(\V,\Rn,\lambda)$, where  $\V$ is a Euclidean Jordan algebra of rank $n$ carrying the trace inner product with $\lambda:\V \to \Rn$ denoting the eigenvalue map, 
	
	\item [$(b)$] The triple $(\V,\Rn,\lambda)$, where $\V$ is a finite dimensional real vector space and $p$ is a real homogeneous polynomial of degree $n$ that is hyperbolic with respect to a vector $e\in \V$, complete and isometric, with $\lx$ denoting the vector of roots of the univariate polynomial $t \mapsto p(te-x)$ written in the decreasing order, and 
	
	\item [$(c)$] The triple $(\V,\W,\gamma)$ where $(\V,\G,\gamma)$ is a normal decomposition system (in particular, an Eaton triple) and $\W := \spn(\gamma(\V))$.
\end{itemize}

The article \cite{gowda-ftvn} covered basic properties of FTvN systems and was primarily focused on ways of transforming certain optimization problems on $\V$ to problems on $\W$ and describing attainment in terms of commutativity. A subsequent article \cite{gowda-ftvn-commutation} dealt with  certain related commutation principles.  In the present article, we  introduce and study the concepts of center, unit element, automorphism, majorization, and reduced system. While some of these concepts are known in familiar settings, our goal here is to present them in the general framework of FTvN systems. We now  briefly define the terms  with  illustrations and results to come later.

\gap
Let $(\V,\W,\lambda)$ be a FTvN system.
\begin{itemize}
	\item The {\it center} of $(\V,\W,\lambda)$ consists of those elements of $\V$ that commute with every element of $\V$. A nonzero element $e\in \V$ is a {\it unit element} if the center consists of just (scalar) multiples of $e$. 
	
	\item An invertible linear transformation $A:\V \to \V$ is said to be an {\it automorphism} of $(\V,\W,\lambda)$ if $\lambda(Ax) = \lx$ for all $x \in \V$.
	
	\item Given two elements $x,y\in \V$, we say that {\it $x$ is majorized by $y$} and write $x\prec y$ if $x \in \conv\,[y]$, where `$\conv$' is an abbreviation for convex hull.
	
	\item A linear transformation $D:\V \to \V$ is called {\it doubly stochastic} if $Dx\prec x$ for all $x\in \V$.
	
	\item If there is a map $\mu:\W \to \W$ such that $(\W,\W,\mu)$ is a FTvN system with $\mu\circ \lambda=\lambda$ and $\mathrm{ran} \, \mu \subseteq \mathrm{ran} \, \lambda$, then $(\W,\W,\mu)$ is said to be a {\it reduced system} of $(\V,\W,\lambda)$.
\end{itemize}

\gap

In a follow up paper, we plan to consider transfer principles. A typical result here says that if $(\W,\W,\mu)$ is a reduced system of $(\V,\W,\lambda)$ and a set $Q$ in $\W$ has some specific property (e.g., closed, open, convex, and/or compact) then $\lambda^{-1}(Q)$ will also have the same property in $\V$. We also plan to  study subdifferentials of certain functions and  extend a result of Lewis from normal decomposition systems  to the setting of FTvN systems.

\gap

The organization of the paper is as follows. 
In Section 2, we cover preliminary definitions regarding FTvN systems; numerous examples are given in Section 3. Section 4 covers some basic properties. By defining a spectral set in $\V$ as a set of the form $\lambda^{-1}(Q)$ for some $Q\subseteq \W$, we state  a result that shows the invariance of the spectral property  under certain topological and algebraic operations. In Section 5, we introduce the concept of center of a FTvN system  and establish several results. We show that  the center is a closed subspace of $\V$ and prove a  result wherein the given FTvN system is decomposed as the orthogonal `sum' of a FTvN system with `trivial' center and another with `full' center. We also introduce the concept of a unit element and show that in the settings of Euclidean Jordan algebras and hyperbolic systems, certain familiar objects are unit elements. Section 6 covers automorphisms of FTvN systems. We show that in the setting of Euclidean Jordan algebras, algebra automorphisms coincide with automorphims of the corresponding FTvN system. We introduce the concept of orbit-transitive FTvN system and describe a result characterizing a FTvN system as a normal decomposition system. In Sections 7 and 8, we cover the concepts of  majorization and doubly stochastic transformations. Section 9 deals with reduced systems. Finally, in the Appendix, we recall the definitions of normal decomposition systems, Eaton triples, and the rearrangement inequality for measurable functions.

%%%%%%%%%%%%%%%%%%%%%%%%%%%%%%%%%%
\section{Preliminaries}
Throughout this paper, we deal with real inner product spaces. In any given real inner product space, we write $\ip{x}{y}$ for  the inner product between two elements $x$ and $y$ and $\norm{x}$ for the (corresponding) norm of $x$. Also,  for a set $S$ in such a space, we write $\overline{S}$, $S^\circ$, $\partial(S)$, and $S^\perp$ for the {\it closure, interior, boundary, and orthogonal complement} of $S$, respectively. We also write $\conv(S)$ (or $\conv\,S$) and $\spn(S)$ for the {\it convex hull} and the {\it span} of $S$, respectively. We say that a nonempty set $S$  is a {\it cone} if $tx\in S$ for all $x\in S$ and $t\geq 0$ in $\R$. We write $\cone(S):=\{tx:t\geq 0,\,x\in S\}$ for the cone generated by $S$.

\gap

Throughout the paper, we will be using  some standard convexity results. For ease of reference, we state them here.

\begin{theorem} \label{minkowski} (\cite{rudin}, Theorem 3.25) \\
	In a finite dimensional Hilbert space, the convex hull of a compact set is compact.
\end{theorem}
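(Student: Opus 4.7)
The plan is to invoke Carath\'eodory's theorem to exhibit $\conv(S)$ as the continuous image of a compact set, from which compactness follows immediately. Let $n = \dim(\V)$ where $\V$ is the ambient finite dimensional Hilbert space, and let $S \subseteq \V$ be compact. Carath\'eodory's theorem tells us that every element of $\conv(S)$ can be written as a convex combination of at most $n+1$ points from $S$, that is,
\[
\conv(S) = \Big\{ \textstyle\sum_{i=0}^{n} t_i\, x_i : t_i \geq 0,\ \sum_{i=0}^{n} t_i = 1,\ x_i \in S \Big\}.
\]

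The key step is to consider the product space $K := \Delta_n \times S^{n+1}$, where
\[
\Delta_n := \Big\{ (t_0, \ldots, t_n) \in \R^{n+1} : t_i \geq 0,\ \textstyle\sum_{i=0}^{n} t_i = 1 \Big\}
\]
is the standard $n$-simplex (compact as a closed bounded subset of $\mathbb{R}^{n+1}$) and $S^{n+1}$ is compact as a finite product of compact sets. Then $K$ is compact. Define the map $\Phi : K \to \V$ by
\[
\Phi\big((t_0,\ldots,t_n),(x_0,\ldots,x_n)\big) := \sum_{i=0}^{n} t_i\, x_i.
\]
This map is continuous since scalar multiplication and addition in $\V$ are continuous. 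By the Carath\'eodory representation above, $\Phi(K) = \conv(S)$. The continuous image of a compact set is compact, so $\conv(S)$ is compact.

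The only nontrivial ingredient is Carath\'eodory's theorem, which is a standard convexity result and the main obstacle if one wishes to give a fully self-contained account. Its proof itself reduces to a linear dependence argument: if a point in $\conv(S)$ is written as a convex combination $\sum_{i=0}^{m} t_i x_i$ with $m > n$, then the vectors $x_1 - x_0, \ldots, x_m - x_0$ are linearly dependent, and this dependence can be used to decrease the number of active terms in the combination without leaving $\conv(S)$. Iterating yields a representation using at most $n+1$ points. Everything else in the argument is routine topology in finite dimensions.
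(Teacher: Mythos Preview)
Your proof is correct and is essentially the standard argument (indeed, it is the proof Rudin gives for Theorem~3.25). Note that the paper does not supply its own proof of this statement; it merely quotes it from \cite{rudin} as a background convexity fact, so there is nothing further to compare.
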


\begin{theorem} (Supporting hyperplane theorem, \cite{rockafellar}, p.100) \label{supporting hyperplane theorem} \\
	Suppose $\V$ is a finite dimensional Hilbert space, $K$ is a closed convex set in $\V$, and $u\in \partial(K)$. Then, there exists a nonzero $c\in \V$ such that
	\[ \ip{c}{u} \geq \ip{c}{x} \,\,\text{for all}\,\, x \in K. \]
\end{theorem}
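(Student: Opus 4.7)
The plan is to prove this by a projection-based separation argument, which is the standard approach in finite dimensional Hilbert space. First I would use the boundary hypothesis to extract an approximating sequence $(u_n)$ in $\V \setminus K$ with $u_n \to u$; such a sequence exists because $u \in \partial(K) = \overline{K} \cap \overline{\V \setminus K}$. Since $u \in \partial(K) \subseteq \overline{K} = K$ (using that $K$ is closed), we also have $u \in K$ itself, which will be useful later.

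Next I would invoke metric projection: because $K$ is a closed convex set in a finite dimensional Hilbert space, for each $n$ there is a unique $p_n := P_K(u_n) \in K$ minimizing $\norm{u_n - x}$ over $x \in K$. The variational characterization of projection gives the fundamental inequality
\[ \ip{u_n - p_n}{x - p_n} \leq 0 \quad \text{for all } x \in K. \]
Because $u_n \notin K$, we have $u_n \neq p_n$, so I can set $c_n := (u_n - p_n)/\norm{u_n - p_n}$, a unit vector. Rearranging the projection inequality yields $\ip{c_n}{x} \leq \ip{c_n}{p_n}$ for every $x \in K$.

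The key step is then a compactness argument: since $\norm{c_n} = 1$ for every $n$ and $\V$ is finite dimensional, the Bolzano-Weierstrass theorem (compactness of the unit sphere) lets me extract a subsequence $c_{n_k} \to c$ with $\norm{c} = 1$, in particular $c \neq 0$. Meanwhile, using $u \in K$, the minimality of the projection gives $\norm{u_n - p_n} \leq \norm{u_n - u} \to 0$, hence $p_n \to u$ as well. Passing to the limit along the subsequence in the inequality $\ip{c_{n_k}}{x} \leq \ip{c_{n_k}}{p_{n_k}}$ yields $\ip{c}{x} \leq \ip{c}{u}$ for every $x \in K$, which is the desired conclusion.

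The only genuine subtlety is the need for finite dimensionality to guarantee the subsequential limit of the unit normals is nonzero; without it, one would have to replace Bolzano-Weierstrass with weak compactness and then argue separately that the weak limit does not vanish. In the present finite dimensional setting this difficulty disappears, which is why the theorem is stated in that generality.
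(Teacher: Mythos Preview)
Your argument is correct and is the standard projection-plus-compactness proof of the supporting hyperplane theorem. Note, however, that the paper does not supply a proof of this statement at all: it is stated in Section~2 as a known convexity fact with a citation to Rockafellar, so there is no ``paper's own proof'' to compare against.
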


The following is a specialized version of the well-known Hahn-Banach theorem (\cite{rudin}, Theorem 3.4) coupled with the Riesz representation theorem.

\begin{theorem} (Strong separation theorem) \label{strong separation theorem} \\
	Suppose $\V$ is a Hilbert space, $u\in \V$, and 
	$K$ is a closed convex set in $\V$. If $u\not\in K$, then there exist  $c\in \V$  and $\alpha\in \R$ such that
	\[ \ip{c}{u} >\alpha \geq \ip{c}{x} \,\, \text{for all} \,\, x \in K. \]
	In particular, if $K$ is also a cone, we can take $\alpha=0$. 
\end{theorem}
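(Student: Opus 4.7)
The plan is to prove this via the closest-point projection in a Hilbert space, which is more constructive than invoking an abstract Hahn-Banach argument and immediately produces the separating vector $c$ as an element of $\V$ (so no Riesz step is needed). I may assume $K\neq\emptyset$, as otherwise any nonzero $c$ and $\alpha:=\ip{c}{u}-1$ work trivially.

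First I would invoke the projection theorem onto a nonempty closed convex set in a Hilbert space to obtain a unique $p\in K$ minimizing $\norm{u-x}$ over $x\in K$, and set $c:=u-p$. Since $u\notin K$, we have $c\neq 0$. The key variational inequality $\ip{c}{x-p}\leq 0$ for every $x\in K$ is derived by noting that $(1-t)p+tx\in K$ for $t\in[0,1]$ by convexity, expanding $\norm{u-(1-t)p-tx}^{2}\geq\norm{u-p}^{2}$, dividing by $t>0$, and letting $t\to 0^{+}$. Rearranging yields $\ip{c}{x}\leq \ip{c}{p}$ for all $x\in K$, while $\ip{c}{u}=\ip{c}{p}+\norm{c}^{2}>\ip{c}{p}$. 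Taking $\alpha:=\ip{c}{p}$ (or any number strictly between the two values) finishes the first part.

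For the cone case, I would observe that if $K$ is a nonempty cone then $0\in K$, forcing $\alpha\geq 0$; moreover for any $x\in K$ and $t>0$, the point $tx\in K$ gives $\alpha\geq t\ip{c}{x}$, and letting $t\to\infty$ forces $\ip{c}{x}\leq 0$ on all of $K$. Replacing the old $\alpha$ by $0$ therefore preserves the right inequality $0\geq \ip{c}{x}$ for every $x\in K$, and strict separation is also preserved since $\ip{c}{u}>\alpha_{\mathrm{old}}\geq 0$.

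There is essentially no hard step here; the only real subtlety is that the projection theorem requires completeness, which is precisely the Hilbert hypothesis, and cannot be dropped (in a non-complete inner product space, minimizers need not exist, and one would have to fall back on a transfinite Hahn-Banach argument together with the Riesz representation theorem — the route suggested in the sentence preceding the theorem statement). Since the paper is stated in the Hilbert setting, the projection approach is both sufficient and cleanest.
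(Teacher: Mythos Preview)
Your proof is correct. Note, however, that the paper does not actually supply a proof of this theorem: it is quoted as a standard preliminary result, described as ``a specialized version of the well-known Hahn-Banach theorem \ldots\ coupled with the Riesz representation theorem,'' with a reference to Rudin. So there is no in-paper proof to compare against.

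That said, your projection-based argument is a genuinely different route from the one the paper points to. The paper's suggested path (Hahn-Banach plus Riesz) works in any locally convex space and then specializes; your argument exploits the Hilbert structure directly via the nearest-point projection, which is more elementary (no transfinite extension, no Riesz representation step) and constructive. Given that the theorem is stated only for Hilbert spaces, your approach is both sufficient and arguably preferable here. Your handling of the cone case is also fine under the paper's convention that a cone contains $0$ (since $t=0$ is allowed), which justifies $\alpha\geq 0$.
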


%%%%%%%%%%%%%%%%%%%%%%%%%%%%%%%%%%%%%%%%%%%%%%%%%%%%%%%%%%%%%%%%%
\section{FTvN systems: Basic definitions}

We start with an expanded version of the definition of a FTvN system.

\begin{definition} \cite{gowda-ftvn} \label{ftvn}
	A Fan-Theobald-von Neumann system (FTvN system, for short) is a triple $(\V,\W,\lambda)$, where $\V$ and $\W$ are real inner product spaces and $\lambda:\V \to \W$ is a map satisfying the following conditions:
	\begin{itemize}
		\item [$(A1)$] $\norm{\lx} = \norm{x}$ for all $x\in \V$.
		\item [$(A2)$] $\ip{x}{y} \leq \ip{\lx}{\ly}$ for all $x,y\in \V$.
		\item [$(A3)$] For any given $c\in \V$ and $q\in \lambda(\V)$, there exists $x\in \V$ such that
		\begin{equation}\label{A3}
			\lx=q \,\, \text{and} \,\, \ip{c}{x} = \ip{\lc}{\lx}.
		\end{equation}
	\end{itemize}
\end{definition}

Here, $(A1)$ refers to the norm-preserving property of $\lambda$;  properties $(A2)$ and $(A3)$ together describe (\ref{intro ftvn}). We note that $(A1)$ is, actually, a consequence of $(A2)$ and $(A3)$. This is seen as follows.
Consider any $u\in \V$. Letting $x=y=u$ in $(A2)$, we get $||u||\leq ||\lambda(u)||$. On the other hand, letting $c=u$ and $q=\lambda(u)$ in $(A3)$, we get an $x\in \V$ such that $\lambda(x)=\lambda(u)$ and $\ip{u}{x}=\ip{\lambda(u)}{\lx}$. Then,
\[ \ip{\lambda(u)}{\lambda(u)}=\ip{\lambda(u)}{\lx}=\ip{u}{x}\leq ||u||\,||x||\leq ||u||\,||\lambda(x)||=||u||\,||\lambda(u)||, \]
where the first inequality is due to the Cauchy-Schwarz inequality. This gives $||\lambda(u)||\leq ||u||$. As $u\in \V$ is arbitrary, we have $||\lambda(u)||=||u||$ for all $u \in \V$.

\gap

Let $(\V,\W,\lambda)$ be a FTvN system. We denote the {\it range} of $\lambda$ by $\mathrm{ran}\,\lambda$; the {\it $\lambda$-orbit} of an element $u\in \V$ is defined by 
\[ [u]:=\{x\in \V:\lx=\lu\}. \] 
More generally, for any set $S\subseteq \V$,
\[ [S]:=\underset{u\in S}{\bigcup}\,[u]. \]
We write $x\sim y$ if $\lx=\ly$, or equivalently, $[x]=[y]$. This defines an equivalence relation on $\V$.

\begin{itemize}
	\item A set $E$ in $\V$ is said to be a {\it spectral set} if it is of the form 
	$E=\lambda^{-1}(Q)$ for some $Q\subseteq \W$, or equivalently, a union of $\lambda$-orbits. It is easy to see that a set $E$ is a spectral set if and only if the implication $x\in E \Rightarrow [x]\subseteq E$ holds.
	
	\item A spectral set that is also a convex cone (that is, it is closed under nonnegative linear combinations) is said to be a {\it spectral cone}.
	
	\item A real-valued function $\Phi:\V \to \R$ is a {\it spectral function} if it is of the form $\Phi=\phi\circ \lambda$ for some function $\phi:\lambda(\V) \to \R$, or equivalently, $\Phi$ is a constant function on every $\lambda$-orbit.
\end{itemize}

\begin{definition}
    We say that elements $x,y\in \V$ {\it commute} in the FTvN system $(\V,\W,\lambda)$ if
    \[ \ip{x}{y} = \ip{\lx}{\ly}. \]
\end{definition}

\gap

%%%%%%%%%%%%%%%%%%%%%%%%%%%%%%%%%%%%%%%%%%%%%%%%%
\section{FTvN systems: Examples}
In what  follows, we present some examples of FTvN systems; a few come  from \cite{gowda-ftvn}. 

\begin{example} \label{ips} ({\it Real inner product space}\,) 
	Consider a real inner product space $(\V, \ip{\cdot}{\cdot})$ with the corresponding norm $\norm{\cdot}$. Then, with $\lx := \norm{x}$, the triple $(\V,\R,\lambda)$ is a FTvN system. In this system, any $\lambda$-orbit is either $\{0\}$ or a sphere centered at the origin. A real-valued function is spectral if it is radial (that is, it depends only on the norm). Two elements commute if and only if one is a nonnegative multiple of the other.
\end{example}

\begin{example}\label{discrete space} ({\it Discrete system}\,)
	On a real inner product space $\V$, let $S$ be a linear isometry, that is, $S$ is linear and $\norm{Sx} = \norm{x}$ for all $x \in \V$. Then, with $\lx:=Sx$, $(\V,\V,\lambda)$ becomes a FTvN system. 
	Here, as $S$ is one-to-one, every $\lambda$-orbit is a singleton; hence, every set in $\V$ is a spectral set. Moreover, any two elements commute (as $S$ preserves inner products). We will call this system a {\it discrete system.}
\end{example}

\begin{example} \label{rn} ({\it The Euclidean space $\Rn$}\,)
	Let $\V=\Rn$, equipped with the usual inner product. Corresponding to a vector $x = (x_1, x_2, \ldots, x_n)$ in $\Rn$, let $x^\downarrow$ denote the decreasing  rearrangement of $x$, that is, $x^\downarrow:=(x_1^\downarrow, x_2^\downarrow,\ldots, x_n^\downarrow)$ with $x_1^\downarrow\geq x_2^\downarrow\geq \cdots \geq x_n^\downarrow$. We also let $|x|$ denote the vector of absolute values of entries of $x$. It is easy to see that $|x|^\downarrow=|x^\downarrow|^\downarrow$.
	For any $x \in \Rn$, let $\lx := x^\downarrow$ and $\mu(x) := |x|^\downarrow$. Then, $(\Rn,\Rn,\lambda)$ is a FTvN system with the FTvN inequality reducing to the {\it Hardy-Littlewood-P\'{o}lya rearrangement inequality}
	\[ \ip{x}{y} \leq \big\langle x^\downarrow, y^\downarrow \big\rangle \quad (x, y \in \Rn). \]
	In this system, two elements $x$ and $y$ commute if and only if there is a permutation that simultaneously takes $x$ to $x^\downarrow$ and $y$ to $y^\downarrow$.  The triple $(\Rn,\Rn,\mu)$ also a FTvN system. These two are particular instances of (Euclidean Jordan algebra) examples given below.
\end{example}

\begin{example}\label{sn,hn} ({\it The set of $n \times n$ real symmetric/complex Hermitian matrices}\,)
	Let $\Sn$ ($\Hn$) denote the space of all $n\times n$ real symmetric (respectively, complex Hermitian) matrices. These spaces carry the real inner product $\ip{X}{Y} := \tr(XY)$, where `tr' denotes the trace. For $X$ in $\Hn$, let $\lambda(X)$ denote the vector of eigenvalues of $X$ written in the decreasing order. From the spectral decomposition theorem, we see  that $||X||=||\lambda(X)||$ for all $X$; so, $\lambda$ map is norm preserving.
	
	In \cite{fan}, Theorem 1, Fan proved that for any $n\times n$ Hermitian matrix $X$ and $1\leq k\leq n$,
	the sum $\sum_{i=1}^{k}\lambda_i(X)$ is the maximum of $\sum_{i=1}^{k} \ip{Xu_i}{u_i}$ with  $k$ orthonormal vectors $u_1, u_2, \ldots, u_k$ varying in the underlying space. We can interpret this result as:
	\[ \underset{U}{\max} \ip{X}{UI_kU^*} = \ip{\lambda(X)}{\lambda(I_k)}, \]
	where $I_k$ is the diagonal matrix with $k$ leading $1$s and $0$s elsewhere, and $U$ denotes a unitary/orthogonal matrix. Noting that the set of matrices $UI_kU^*$ is precisely the $\lambda$-orbit $[I_k]$, the above statement verifies $(1)$ in one special case.
	 
	This result proves Schur's theorem on majorization (that the diagonal of a Hermitian matrix is majorized by the eigenvalue vector) as well as the inequality
	\begin{equation}\label{fan-particular case}
		\ip{X}{I_k} \leq \ip{\lambda(X)}{\lambda(I_k)}.
	\end{equation}
	As an extension of (\ref{fan-particular case}), {\it Richter} \cite{richter} proves the following:
	\begin{equation}\label{richter}
		\ip{X}{Y} \leq \ip{\lambda(X)}{\lambda(Y)} \quad (X, Y \in \Hn).
	\end{equation}
	Noting that Richter's proof is somewhat analytical, Mirsky \cite{mirsky} gives a simple  algebraic proof based on the above result of Fan. 
	
	Later, {\it Theobald} \cite{theobald} showed that equality holds in (\ref{richter}) if and only if there exists a unitary/orthogonal matrix $U$ such that
	\[ X = U\mathrm{diag}(\lambda(X))U^* \quad \text{and} \quad Y = U \mathrm{diag}(\lambda(Y))U^*. \]
	Putting all these together, we see that $(\Sn, \Rn, \lambda)$ and $(\Hn, \Rn, \lambda)$ are FTvN systems.
\end{example}

\begin{example} \label{space mn} ({\it The space $M_n$}\,)
	Consider the triple $(M_n,M_n,\lambda)$, where $M_n$ denotes the set of all $n\times n$ complex matrices with real inner product
	$\ip{X}{Y} := Re \tr(X^*Y)$, and the map $\lambda$ takes $X$ in $M_n$ to $\diag (s(X))$ (the diagonal matrix consisting of the singular values of $X$ written in the decreasing order). 
	
	In this setting, {\it von Neumann} \cite{neumann} proved the following: For all $X,Y\in M_n$,
	\[ \ip{X}{Y} \leq \ip{\lambda(X)}{\lambda(Y)}, \]
	with equality if and only if there exist unitary matrices $U$ and $V$ such that 
	\[ X = U\mathrm{diag}(s(X))V \quad \text{and} \quad Y = U\mathrm{diag}(s(Y))V. \]
	This result shows that $(M_n, M_n, \lambda)$ is a FTvN system.
\end{example}

\begin{example} \label{eja} ({\it Euclidean Jordan algebra}\,) 
	Let $\V$ be a Euclidean Jordan algebra of rank $n$ carrying the trace inner product and $\lambda : \V \to \Rn$ denotes the eigenvalue map (so, for any $x \in \V$, $\lx$ denote the vector of eigenvalues of $x$ written in the decreasing order). Then, $\lambda$ is a norm-preserving map and the inequality 
	\[ \ip{x}{y} \leq \ip{\lx}{\ly} \quad (x, y\in \V) \]
	is known \cite{lim et al, baes, gowda-tao}. Additionally, for any $c\in \V$ and $q=\lu$ for some $u\in \V$, we can write $c = \lambda_1(c)e_1 + \lambda_2(c)e_2 + \cdots + \lambda_n(c)e_n$, where  $\{e_1, e_2, \ldots, e_n\}$ is a Jordan frame, and define
	\[ x =\lambda_1(u)e_1 + \lambda_2(u)e_2 + \cdots + \lambda_n(u)e_n. \]
	Then, $\lx=\lu=q$ and, due to the orthonormality of any Jordan frame,   $\ip{c}{x} = \ip{\lc}{\lx}$. These arguments show that conditions in Definition \ref{ftvn} are verified. Thus, $(\V,\Rn,\lambda)$ is a FTvN system.
	In this setting, a set in $\V$ is a spectral set if it is of the form $\lambda^{-1}(Q)$ for some (permutation invariant) set $Q$ in $\Rn$; a function $\Phi:\V \to \R$ is a spectral function if it is of the form $\phi \circ \lambda$ for some (permutation invariant) function $\phi:\Rn \to \R$. 
	When $\V$ is a simple Euclidean Jordan algebra or equivalent to $\Rn$, these are precisely sets and functions that are invariant under algebra automorphisms of $\V$ \cite{jeong-gowda-spectral set}. (Algebra automorphisms of $\V$ are invertible linear transformations that preserve the Jordan product.) Commutativity of elements $x$ and $y$ in the FTvN system $(\V,\W,\lambda)$ means that there is a Jordan frame $\{e_1,e_2,\ldots, e_n\}$ in $\V$ such that $x = \lambda_1(x)e_1 + \lambda_2(x)e_2 + \cdots + \lambda_n(x)e_n$ and 	$y = \lambda_1(y)e_1 + \lambda_2(y)e_2 + \cdots + \lambda_n(y)e_n$. This defines the {\it strong operator commutativity} in the algebra $\V$. The algebras of $n \times n$ real/complex Hermitian matrices are primary examples of Euclidean Jordan algebras of rank $n$.
\end{example}

\begin{example} \label{eja with absolute value} ({\it Euclidean Jordan algebra with absolute value map}\,) 
	Let $\V$ be a Euclidean Jordan algebra of rank $n$ carrying the trace inner product and $\lambda:\V \to \Rn$ denote the eigenvalue map. Then, as noted in Example \ref{eja}, $(\V, \Rn, \lambda)$ is a FTvN system. 
	Now define $\mu : \V \to \Rn$ by $\mu(x) := |\lx|^\downarrow$. {\it We claim that $(\V,\Rn, \mu)$ is also a FTvN system.} Clearly, conditions $(A1)$ and $(A2)$ in Definition \ref{ftvn} hold. We verify $(A3)$. Let $c\in \V$ and $q=\mu(u)=|\lu|^\downarrow$ for some $u\in \V$. Let the spectral decomposition of $c$ in $\V$ be given by $c = c_1e_1 + c_2e_2 + \cdots + c_ne_n$, where $c_1, c_2, \ldots, c_n$ denote the eigenvalues of $c$ and $\{e_1, e_2, \ldots, e_n\}$ is a Jordan frame. We define $\varepsilon_i: = \mathrm{sign} \, c_i$ for $i = 1, 2, \ldots, n$ so that 
	\[ c = \varepsilon_1|c_1|e_1 + \varepsilon_2|c_2|e_2 + \cdots + \varepsilon_n|c_n|e_n. \]
	By rearranging $e_k$s, if necessary, we may assume that $|c_1| \geq |c_2| \cdots \geq |c_n|$, in which case, $\mu(c)=(|c_1|, |c_2|, \ldots, |c_n|)$. Now, corresponding to $q=|\lu|^\downarrow$, we define
	\[ x = \varepsilon_1q_1e_1 + \varepsilon_2q_2e_2 + \cdots + \varepsilon_n q_ne_n. \]
	Then, $\mu(x) = |\lx|^\downarrow=q$ (observe that entries of $q$ are nonnegative and decreasing). Furthermore, due to the orthonormality of the Jordan frame $\{e_1,e_2,\ldots, e_n\}$, we see that $\ip{c}{x} = \sum_{i=1}^{n}|c_i|q_i = \ip{\mu(c)}{\mu(x)}$. This verifies $(A3)$. We conclude that $(\V,\Rn,\mu)$ is a FTvN system. 
	It is easy to verify that if $x$ and $y$ commute in $(\V,\Rn,\mu)$, then they do so in $(\V,\Rn,\lambda)$.
\end{example}

\begin{example} \label{hyperbolic} ({\it System induced by a hyperbolic polynomial} \cite{bauschke et al})
	Let $\V$ be a finite dimensional real vector space, $e\in \V$, and $p$ be a real homogeneous polynomial of degree $n$ on $\V$. We say that $p$ is {\it hyperbolic} with respect to $e$ if $p(e)\neq 0$ and for every $x\in \V$,  the roots of the univariate polynomial $t \mapsto p(te-x)$ are all real. Let $p$ be such a polynomial. For any $x\in \V$, let $\lx$ denote the vector of roots of this univariate polynomial with entries written in the decreasing order. When $p$ is {\it complete} (which means that $\lx=0$ implies $x=0$), $\V$ becomes an inner product space under the inner product
	\[ \ip{x}{y} := \frac{1}{4} \Big[ \norm{\lambda(x+y)}^2 - \norm{\lambda(x-y)}^2 \Big], \]
	(where the right-hand side is computed in $\Rn$ with the usual norm), see \cite{bauschke et al}, Theorem 4.2. Relative to this inner product, $\lambda : \V \to \Rn$ becomes norm-preserving and $\ip{x}{y} \leq \ip{\lx}{\ly}$ for all $x,y\in \V$, see \cite{bauschke et al}, Proposition 4.4. 
	When $p$ is also {\it isometric} (which means that for every $c\in \V$ and $q \in \lambda(\V)$, there exists $x\in \V$ such that $\lx=q$ and $\lambda(c+x)=\lc+\lx$). In this case, it is shown in \cite{bauschke et al}, Proposition 5.3, that (\ref{intro ftvn}) holds. Thus, when the hyperbolic polynomial $p$ is complete and isometric, the triple $(\V,\Rn,\lambda)$ becomes a FTvN system.
	
	We note that the previous example (Euclidean Jordan algebra, Example \ref{eja}) becomes a special case via the polynomial
	$p(x):=\det x$ (the product of eigenvalues of $x$). While a number of other examples are known \cite{bauschke et al}, the authors are not aware of any  characterization of the isometric property. 
\end{example}

\begin{example} \label{nds} ({\it Normal decomposition system}\,)
	Consider a normal decomposition system $(\V,\G,\gamma)$ \cite{lewis}. Here $\V$ is a real inner product space, $\G$ is a closed subgroup of the orthogonal group of $\V$, and the map $\gamma:\V \to \V$ satisfies certain conditions, see the Appendix. 
	Then, with $\W = \spn(\gamma(\V))$ (which is a subset of $\V$) and $\lambda = \gamma$, the triple $(\V,\W,\lambda)$ becomes a FTvN system \cite{gowda-ftvn}. In this system, $\lambda^2=\lambda$ and spectral sets/functions are those that are invariant under elements of $\G$. Also, $x$ and $y$ commute in $(\V,\W,\lambda)$ if and only if there exists $A\in \G$ such that $x=A\gamma(x)$ and $y=A\gamma(y)$. The space of all $n\times n$ complex matrices is a primary example of a normal decomposition system, see Example \ref{space mn}.
	It is known, see \cite{lim et al}, that every simple Euclidean Jordan algebra can be regarded as a normal decomposition system.
\end{example}

\begin{example} ({\it Eaton triple}\,) 
	Consider an Eaton triple $(\V,\G,F)$ \cite{eaton-perlman} (see the Appendix for the definition). Here, $\V$ is a finite dimensional real inner product space, $\G$ is a closed subgroup of the orthogonal group of $\V$, and $F$ is a closed convex cone in $\V$. For any $x\in \V$, let $\mathrm{Orb}(x) = \{Ax:A\in \G\}$. Then with $\W:=F-F$ and $\lx$ denoting the unique element in the singleton set $\mathrm{Orb}(x)\cap F$, the triple $(\V,\W,\lambda)$ becomes a FTvN system \cite{gowda-ftvn}. 
	It is known that every Eaton triple is a normal decomposition system and every finite dimensional normal decomposition system is an Eaton triple.
\end{example}

\begin{example} \label{sequence space} ({\it The sequence space $\ell_2(\R)$}\,)
	Let $\V$ denote the space $\ell_2(\R)$ of  all of square summable real sequences  with the usual inner product and norm. We show that $\V$ can be made into a FTvN system. First, let ${\cal F}$ denote the subspace of all ``finite" sequences:  $x=(x_k)\in {\cal F}$ if either $x$ is zero or there exists a natural number $n \in \mathbb{N}$ such that $x_k=0$ for all $k\geq n$.  If such an $x$ is nonzero, we write $x=(\overline{x},\overline{0})$, where $\overline{x}\in \Rn$  and $\overline{0}$ is a sequence of zeros. (Note that this representation is not unique.) We then consider, as in Example \ref{rn}, $|\bar{x}|^\downarrow$ in $\Rn$ and 
	\[ \lx:=|x|^\downarrow:=\big( |\overline{x}|^\downarrow, \overline{0} \big), \]
	which is uniquely defined as an element of  $\ell_2(\R)$. We also let $\lambda(0)=0$. Now, given $x,y\in {\cal F}$, we write $x = (\overline{x}, \overline{0})$ and $y = (\overline{y}, \overline{0})$ where $\overline{x}, \overline{y}\in \Rn$ for some $n$. Then,
	\[ \ip{x}{y} = \ip{\overline{x}}{\overline{y}} \leq \big\langle |\overline{x}|^\downarrow, |\overline{y}|^\downarrow \big\rangle = \ip{\lx}{\ly}. \]
	(Note that inner products are computed in different spaces; for the inequality, see Example \ref{rn}.) Moreover, for all $x,y\in {\cal F}$,
	\[ ||\lx-\ly||=|| \,|\overline{x}|^\downarrow-|\overline{y}|^\downarrow\,||\leq ||\overline{x}-\overline{y}||=||x-y||.\]
	The above Lipschitzian property implies that $\lambda$ is uniformly continuous on ${\cal F}$. Since any element $x=(x_1,x_2,\ldots)\in \ell_2(\R)$ can be written as the limit of $x^{(n)}:=(x_1,x_2,\ldots, x_n,0,0,\ldots)\in {\cal F}$, We can extend $\lambda$ to $\ell_2(\R)$ uniquely; this extension -- still denoted by $\lambda$ -- satisfies the following properties:
	\begin{itemize}
		\item [$(i)$] $\lambda$ is norm-preserving.
		\item [$(ii)$] $\ip{x}{y} \leq \ip{\lx}{\ly}$ for all $x,y\in \ell_2(\R)$.
		\item [$(iii)$] The components of  $\lx$ are nonnegative and decreasing.
		\item [$(iv)$] If $x=(x_n)\in {\cal F}$, then $\lx\in {\cal F}$.
		\item [$(v)$] If $x=(x_n)$ has infinitely many nonzero entries, say, $x_{n_1},x_{n_2},\ldots$, then there is a permutation $\sigma$ on $\{n_1,n_2,\ldots\}$ such that 
		$\lx=\big (|x_{\sigma(n_1)}|, |x_{\sigma(n_2)}|,\ldots\big )$. In particular, in this case, all entries of $\lx$ are positive. 
	\end{itemize}
	
	We remark that the permutation $\sigma$ in Item $(v)$ can be constructed by observing that each interval
	of the form $\left( \frac{||x||}{2^k},\, \frac{||x||}{2^{k-1}} \right]$ contains a finite number of nonzero entries of $x$. Another (formal and known) way of constructing $\lx$ for any $x\in \ell_2(\R)$ is by letting:
	$\lx=x^*$, where 
	\[ x_n^* = \inf \{ \alpha \geq 0 : \mu \big( \{k \in \mathbb{N} : \abs{x_k} > \alpha \} \big) \leq n-1 \} \;\; (\forall\, n \in \mathbb{N}),\]
	with $\mu$ denoting the counting measure on $\mathbb{N}$, see the Appendix.
	
	Now that conditions $(A1)$ and $(A2)$ of Definition \ref{ftvn} are verified, see Items $(i)$ and $(ii)$ above, we  show that condition $(A3)$ also holds.  
	Suppose we are given $c := (c_n) \in \V$ and $q := (q_n) \in \lambda(\V)$; note that $q_{n} \geq q_{n+1} \geq 0$ for all $n \in \mathbb{N}$. Define the set 
	$D = \{ n \in \mathbb{N} : c_n = 0\}$.
	
	\gap
	
	{\it Case 1:}  $\mathbb{N} \setminus D$ is finite, i.e., there are only finitely many, say, $k$ nonzero entries in $c$. Let $c_{n_1},c_{n_2},\ldots, c_{n_k}$ be these entries.
	We rearrange  these entries of $c$ (for example, by relabeling the indices $n_1,n_2,\ldots, n_k$) so that  $|c_{n_1}| \geq \cdots \geq |c_{n_k}|$, in which case,
	$\lc=\big (|c_{n_1}|, |c_{n_2}|,\ldots, |c_{n_k}|,0,0,\ldots \big )$.  Let $\varepsilon_{n_i}$ denote the sign of $c_{n_i}$. 
	Define $x := (x_n)$ in $\V$ such that $x_{n_i} = \varepsilon_{n_i}\,q_i$ for $i = 1, \ldots, k$ with  all the other entries of $x$ taken (in order) from $q_{k+1}, q_{k+2}, \ldots$. Then, $\lx = q$ and
	\[ \ip{c}{x} = \sum_{n=1}^{\infty} c_n x_n = \sum_{i=1}^{k} c_{n_i} x_{n_i} = \sum_{i=1}^{k} \varepsilon_{n_i} |c_{n_i}| \varepsilon_{n_i} q_i = \sum_{i=1}^{k} |c_{n_i}| q_i = \ip{\lc}{q}. \]
	
	\gap
	
	{\it Case 2:}  $\mathbb{N} \setminus D$ is countably infinite.  Then there exists a bijection $\delta : \mathbb{N} \to \mathbb{N} \setminus D$. Note that $\tilde{c} := (\tilde{c}_n)$, defined by $\tilde{c}_n = c_{\delta(n)}$, is a sequence without zero entries and $\norm{\tilde{c}} = \norm{c}$.
	
	Now, for each $n \in \mathbb{N}$, write $\tilde{c}_n = \varepsilon_n \abs{\tilde{c}_n}$, where $\varepsilon_n$ is the sign of $\tilde{c}_n$. Consider $\tilde{c}^* := \lambda(\tilde{c})$; by Item $(v)$ above,  there exists a bijection $\sigma : \mathbb{N} \to \mathbb{N}$ such that $\tilde{c}_n^* = \abs{\tilde{c}_{\sigma(n)}}$ for all $n \in \mathbb{N}$. Define $x := (x_n)$ in $\V$ by
	\[ x_n = \begin{cases}
		\varepsilon_{\delta^{-1}(n)} q_{\sigma^{-1}(\delta^{-1}(n))} & \text{if} \;\; n \notin D, \\
		0 & \text{if} \;\; n \in D.
	\end{cases} \]
	
	Note that $x_{\delta(n)} = \varepsilon_n q_{\sigma^{-1}(n)}$ for all $n \in \mathbb{N}$ and $\lx = q$, as $\mathbb{N} \setminus D$ is countably infinite. Thus, $\{\abs{x_n} : n \in \mathbb{N} \setminus D\}$ contains all the entries of $q$. Moreover, we have
	\begin{align*}
		\ip{c}{x} 
		& = \sum_{n=1}^{\infty} c_n x_n                              
		= \sum_{n=1}^{\infty} c_{\delta(n)} x_{\delta(n)}
		= \sum_{n=1}^{\infty} \varepsilon_n \abs{\tilde{c}_n} \varepsilon_n  q_{\sigma^{-1}(n)} \\
		& = \sum_{n=1}^{\infty} \abs{\tilde{c}_n} q_{\sigma^{-1}(n)} 
		= \sum_{n=1}^{\infty} \abs{\tilde{c}_{\sigma(n)}} q_{n}
		= \sum_{n=1}^{\infty} \tilde{c}_n^* q_n = \ip{\lc}{q}.
	\end{align*}
	Thus we have verified condition $(A3)$. Hence, $(\V, \V, \lambda)$ is a FTvN system.
\end{example}

{\bf Remark.} From the {\it Case 1} above, we see that $({\cal F},{\cal F},\lambda)$ is also a FTvN system. If $H$ is an infinite dimensional real separable Hilbert space, then $H$, being isometrically isomorphic to $\ell_2(\R)$, can be made into a FTvN system.

\gap

{\bf Problem.} Consider the Hilbert space $\V$ of real square integrable functions over a $\sigma$-finite measure space. Corresponding to $f\in \V$, define the decreasing rearrangement $f^*$, see the Appendix. With $\lambda(f)=f^*$, the problem is to decide if (or  when) $(\V,\V,\lambda)$ is a FTvN system.

\begin{example} \label{twisted FTvn system} ({\it Twisted FTvN system}\,)
	Consider a FTvN system $(\V,\W,\lambda)$. Define $\widetilde{\lambda}:\V\to \W$ by 
	\[ \widetilde{\lambda}(x):=-\lambda(-x)\quad (x\in \V). \]
	It is easy to see that $(\V,\W,\widetilde{\lambda})$ is also a FTvN system. (In Example \ref{rn}, if $\lambda$ denotes the decreasing rearrangement, then $\widetilde{\lambda}$ denotes the increasing rearrangement.)
	We note that $E$ is spectral in $(\V,\W,\lambda)$ if and only if $-E$ is spectral in $(\V,\W,\widetilde{\lambda})$. While in certain settings (Examples \ref{eja} and \ref{hyperbolic}) $E$ is spectral if and only if $-E$ is also spectral in the same system, it is not clear if this holds in general FTvN systems. 
\end{example}

\begin{example} \label{cartesian prouct} ({\it Cartesian product}\,)
	A Cartesian product of (a finite number of) FTvN systems can be made into a FTvN system in an obvious way. 
	For example, if $(\V_1,\W_1,\lambda_1)$ and $(\V_2,\W_2,\lambda_2)$ are two FTvN systems, then so is $(\V_1\times \V_2,\W_1\times \W_2,\lambda_1\times \lambda_2)$, where
	the inner product in $V_1\times \V_2$ is defined by
	\[ \big\langle (x_1, x_2), (y_1, y_2) \big\rangle := \ip{x_1}{y_1} + \ip{x_2}{y_2} \]
	(with a similar definition in $\W_1\times \W_2$), and 
	\[ (\lambda_1\times \lambda_2)(v_1,v_2):=\,\big (\lambda_1(v_1),\lambda_2(v_2)\,\big ). \] 
	We consider two particular instances:
	\begin{itemize}
		\item [$\bullet$] Given any FTvN system $(\V, \W, \lambda)$, let $(\V_1, \W_1, \lambda_1)= (\R, \R, \tau)$ with $\tau(t)=t$ for all $t\in \R$ and $(\V_2,\W_2,\lambda_2)=(\V,\W,\lambda)$. We can then form the product FTvN system $(\R\times \V,\R\times \W,\tau\times \lambda)$, where
		\[ (\tau \times \lambda)(t,v) = (t, \lv) \quad \big( (t,v) \in \R \times \V \big). \]
		
		\item [$\bullet$] Consider a FTvN system $(\V,\W,\lambda)$ and take  $w_0\in \W$ with $||w_0||=1$. Then $(\R, \W, \nu)$ is a FTvN system, where $\nu(t)=tw_0$ for all $t\in \R$. We can then form the system $(\R\times \V,\W\times \W,\nu\times \lambda)$, where
		\[ (\nu \times \lambda)(t,v) = (tw_0, \lv) \quad \big( (t,v) \in \R \times \V \big). \]
	\end{itemize}
\end{example}

\begin{example} \label{composition} ({\it Composition}\,)
	Suppose $(\V,\W,\lambda)$ is a FTvN system and $S:\W \to \W$ is a linear isometry. It  is easy to see that $(\V,\W,S\circ \lambda)$ is also a FTvN system. 
	Now, starting with a FTvN system $(\V,\W,\lambda)$ and $w_0\in \W$ with $||w_0||=1$, we can form the 
	product system $(\R\times\V,\W\times \W,\nu\times \lambda)$, see the previous example. Now consider the linear isometry $S$ on $\W\times \W$ defined by 
	\[ S : (w_1, w_2) \mapsto \frac{1}{\sqrt{2}} \Big( w_1 + w_2, w_1 - w_2 \Big). \]
	Then, writing $\mu=S\circ (\nu\times \lambda)$, we see that
	\[ \mu(t,v) := \frac{1}{\sqrt{2}} \Big( tw_0 + \lv, tw_0 - \lv \Big). \] 
	Thus, we get the new FTvN system $(\R\times \V,\W\times \W,\mu)$.
	
	Specializing further, if $\V$ is an inner product space, then with $\W=\R$, $w_0=1$, and $\lx = \norm{x}$, we get the FTvN system $(\R\times \V,\R^2,\mu)$, where
	\[ \mu(t,v) := \frac{1}{\sqrt{2}} \Big( t+\norm{v}, t-\norm{v} \Big). \]
	One may note the similarity between the above $\mu$ and the eigenvalue map that appears in the Jordan spin algebra ${\cal L}^n$ in the study of Euclidean Jordan algebras.
\end{example}

\begin{example} \label{subspace of a FTvN} ({\it Subspace of a FTvN system}\,) 
	Let $(\V,\W,\lambda)$ be a FTvN system. If $\mathcal{U}$ is a (linear) subspace of $\V$ that is  spectral, then $(\mathcal{U},\W,\lambda)$ is  a FTvN system. However, this conclusion may not hold if $\mathcal{U}$ is not spectral. We provide an example taken from \cite{bauschke et al}, appropriately modified. Consider $(\R ^3,\R ^3,\lambda)$ with $\lx=x^\downarrow$, see Example \ref{rn}. Let $\mathcal{U}$ be the span of vectors  $(1, 1, 1)$ and $(3, 1,0)$. Then, for any $x=\alpha (1, 1, 1) + \beta (3, 1, 0)$ in $\mathcal{U}$, with $\alpha,\beta\in \R$, $\lx=x$ if $\beta \geq 0$ and $\lx= \alpha (1, 1, 1) + \beta (0, 1, 3)$ if $\beta<0$.
	Now, $\lambda : \mathcal{U} \to \R^3$ satisfies conditions $(A1)$ and $(A2)$ in Definition \ref{ftvn}. However, on $\mathcal{U}$, $\lx=\ly \Rightarrow x=y$  and condition $(A3)$ fails to hold with  $c=(3,1,0)$, $u=-(3,1,0)$, and $q=\lu=(0,-1,-3)$.
\end{example}

We end this section with a problem.

\gap

{\bf Problem.} Suppose $(\V, \W, \lambda)$ is an FTvN system. Let $\overline{\V}$ and $\overline{\W}$ be the completions of (the inner product spaces) $\V$ and $\W$, respectively. As $\lambda:\V\to \W$ is Lipschitz, see Theorem \ref{basic theorem}, it can be extended to $\overline{\lambda} : \overline{\V} \to \overline{\W}$. It is easy to see that conditions $(A1)$ and $(A2)$ of Definition \ref{ftvn} hold for $\overline{\lambda}$. The problem is to decide whether condition $(A3)$ also holds. 
See Example \ref{sequence space} for motivation.

%%%%%%%%%%%%%%%%%%%%%%%%%%%%%%%%%%%%%%%%%%%%%%%%
\section{FTvN systems: some basic properties}

In this section, we describe some basic properties that hold in FTvN systems.

\begin{theorem} (\cite{gowda-ftvn}, Section 2) \label{basic theorem}
	Let $(\V,\W,\lambda)$ be a FTvN system. Then, the following hold for $x,y,c\in \V$:
	\begin{itemize}
		\item [$(a)$] $\lambda(tx)=t\lx$ for all $t\geq 0$.
		\item [$(b)$] $\norm{\lx - \ly} \leq \norm{x-y}$.
		\item [$(c)$] $\ip{\lc}{\lambda(x+y)} \leq \ip{\lc}{\lx} + \ip{\lc}{\ly}$. More generally, 
		for $c,x_1,x_2,\ldots, x_k$ in $\V$,
		\begin{equation} \label{general sublinearity}
			\big\langle \lc, \lambda(x_1 + x_2 + \cdots + x_k) \big\rangle \leq \big\langle \lc, \lambda(x_1) + \lambda(x_2) + \cdots + \lambda(x_k) \big\rangle.  
		\end{equation}
		\item [$(d)$] $F:=\mathrm{ran}\,\lambda$ is a convex cone in $\W$. It is closed if $\V$ is finite dimensional. 
		\item [$(e)$] The following are equivalent:
		\begin{itemize}
			\item [$(i)$] $x$ and $y$ commute in $(\V,\W,\lambda)$, that is, $\ip{x}{y} = \ip{\lx}{\ly}$.
			\item [$(ii)$] $\lambda(x+y)=\lx+\ly$.
			\item [$(iii)$] $\norm{\lx - \ly} = \norm{x-y}$.
		\end{itemize}
	\end{itemize}
\end{theorem}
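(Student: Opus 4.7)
The plan is to establish the six items in the logical order (a), (b), (c), (e), (d), since the convex cone property in (d) relies on (e).

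For (a), I would combine $(A2)$ with Cauchy-Schwarz in $\W$. Given $t\ge 0$ and $x\in\V$, apply $(A2)$ to the pair $(tx,x)$ to get $t\norm{x}^2=\ip{tx}{x}\le\ip{\lambda(tx)}{\lambda(x)}$, and then Cauchy-Schwarz combined with $(A1)$ yields $\ip{\lambda(tx)}{\lambda(x)}\le\norm{\lambda(tx)}\,\norm{\lambda(x)}=t\norm{x}^2$. Equality throughout forces (assuming $x\ne 0$) $\lambda(tx)=s\,\lambda(x)$ for some $s\ge 0$, and matching norms gives $s=t$; the case $x=0$ is immediate from $\norm{\lambda(0)}=\norm{0}=0$. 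For (b), I would simply expand
\[ \norm{x-y}^2-\norm{\lambda(x)-\lambda(y)}^2=2\bigl(\ip{\lambda(x)}{\lambda(y)}-\ip{x}{y}\bigr)\ge 0 \]
using $(A1)$ and $(A2)$.

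For (c), the key maneuver is to reverse the roles of $c$ and $x+y$ in $(A3)$. Applying $(A3)$ with the first argument taken to be $x+y$ and $q=\lambda(c)$, I obtain some $z\in\V$ with $\lambda(z)=\lambda(c)$ and $\ip{x+y}{z}=\ip{\lambda(c)}{\lambda(x+y)}$. Then $(A2)$, applied separately to $\ip{x}{z}$ and $\ip{y}{z}$, gives
\[ \ip{\lambda(c)}{\lambda(x+y)}=\ip{x}{z}+\ip{y}{z}\le\ip{\lambda(x)}{\lambda(z)}+\ip{\lambda(y)}{\lambda(z)}=\ip{\lambda(c)}{\lambda(x)}+\ip{\lambda(c)}{\lambda(y)}. \]
The $k$-term version in (\ref{general sublinearity}) then follows by a routine induction.

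For (e), the equivalence (i)$\Leftrightarrow$(iii) is immediate on expanding both $\norm{x-y}^2$ and $\norm{\lambda(x)-\lambda(y)}^2$ using $(A1)$. For (ii)$\Rightarrow$(i), take squared norms of both sides of $\lambda(x+y)=\lambda(x)+\lambda(y)$, use $(A1)$ to replace $\norm{\lambda(x+y)}^2$ by $\norm{x+y}^2$, and expand. The heart of (e) is (i)$\Rightarrow$(ii). Under commutativity, $(A1)$ yields $\norm{\lambda(x+y)}^2=\norm{x+y}^2=\norm{\lambda(x)+\lambda(y)}^2$. Next, $(A2)$ applied twice gives
\[ \ip{\lambda(x+y)}{\lambda(x)+\lambda(y)}\ge\ip{x+y}{x}+\ip{x+y}{y}=\norm{x+y}^2=\norm{\lambda(x+y)}^2, \]
while the sublinearity just proved in (c) yields the reverse inequality; combined with the norm identity above, the Cauchy-Schwarz equality case in $\W$ forces $\lambda(x+y)=\lambda(x)+\lambda(y)$.

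Finally, for (d), positive homogeneity from (a) tells us $\mathrm{ran}\,\lambda$ is a cone, and for closure under addition I would use $(A3)$ with the given $c=x$ and $q=\lambda(y)$ to produce $y'\in[y]$ with $\ip{x}{y'}=\ip{\lambda(x)}{\lambda(y)}$; then $x$ and $y'$ commute and (e)(ii) gives $\lambda(x+y')=\lambda(x)+\lambda(y)\in\mathrm{ran}\,\lambda$. Closure in the finite-dimensional case follows because $\lambda$ is continuous by (b) and $(A1)$ carries the unit sphere in $\V$ into the unit sphere in $\W$; the image of this compact sphere is compact in $\W\setminus\{0\}$, and its conic hull, namely $\mathrm{ran}\,\lambda$, is therefore closed. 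The subtlest step is (c), as it depends on the non-obvious ``dual'' use of $(A3)$; the only place where finite-dimensionality enters is the closure assertion in (d).
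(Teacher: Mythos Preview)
The paper does not supply its own proof of this theorem; it is quoted from \cite{gowda-ftvn} without argument, so there is nothing to compare against. Your proof is correct and self-contained. The organization---in particular the ``dual'' application of $(A3)$ (with $x+y$ in the role of $c$) to obtain the sublinearity in (c), and the use of $(A3)$ plus (e)(ii) to show $\mathrm{ran}\,\lambda$ is closed under addition in (d)---matches the standard route to these facts. One clarification worth spelling out in (e)(i)$\Rightarrow$(ii): when you say ``the sublinearity in (c) yields the reverse inequality,'' the intended step is to apply (c) once with $c=x$ and once with $c=y$ and add, obtaining $\ip{\lambda(x)+\lambda(y)}{\lambda(x+y)}\le\|\lambda(x)+\lambda(y)\|^2=\|\lambda(x+y)\|^2$; alternatively, Cauchy--Schwarz together with the norm identity gives the same bound directly, so the reference to (c) there is optional.
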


Observe that Item $(b)$ above gives the continuity of $\lambda$; thus, every $\lambda$-orbit is closed in $\V$, and, additionally compact when $\V$ is finite dimensional.

\begin{proposition}\label{image of a convex set}
	Consider a FTvN system $(\V,\W,\lambda)$. If $E$ is convex and spectral in $\V$, then $\lambda(E)$ is convex in $\W$. In particular, if $E$ is a spectral set that is also a convex cone, then $\lambda(E)$ is a convex cone.
\end{proposition}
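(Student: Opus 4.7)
The plan is to reduce convexity of $\lambda(E)$ to a statement about a single convex combination and then exploit property $(A3)$ to move to representatives that commute, so that the value of $\lambda$ on the combination can be computed explicitly via Theorem \ref{basic theorem}$(e)$.

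First I would fix $p,q\in \lambda(E)$ and $t\in[0,1]$, and pick $x,y\in E$ with $\lambda(x)=p$ and $\lambda(y)=q$. The goal is to exhibit a point of $E$ whose $\lambda$-image equals $tp+(1-t)q$. The key observation is that I am free to replace $y$ by any element of its $\lambda$-orbit $[y]$ without leaving $E$, because $E$ is spectral. I would apply $(A3)$ with $c:=x$ and $q:=\lambda(y)$ to obtain $y'\in \V$ satisfying $\lambda(y')=\lambda(y)$ and $\ip{x}{y'}=\ip{\lambda(x)}{\lambda(y')}$; this says exactly that $x$ and $y'$ commute in the sense of the FTvN system. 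Since $y\in E$ and $E$ is spectral, $[y]\subseteq E$, so $y'\in E$.

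Next, convexity of $E$ gives $z:=tx+(1-t)y'\in E$. I claim that $\lambda(z)=tp+(1-t)q$. Scaling the commutativity $\ip{x}{y'}=\ip{\lambda(x)}{\lambda(y')}$ by the nonnegative scalars $t$ and $1-t$ and using Theorem \ref{basic theorem}$(a)$ shows that $tx$ and $(1-t)y'$ still commute. By Theorem \ref{basic theorem}$(e)$ this yields
\[
\lambda(z)=\lambda\bigl(tx+(1-t)y'\bigr)=\lambda(tx)+\lambda\bigl((1-t)y'\bigr)=t\lambda(x)+(1-t)\lambda(y')=tp+(1-t)q,
\]
so $tp+(1-t)q\in \lambda(E)$. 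This establishes convexity of $\lambda(E)$.

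For the second assertion, assume additionally that $E$ is a cone. Given $p\in \lambda(E)$ with $p=\lambda(x)$, $x\in E$, and $\alpha\geq 0$, Theorem \ref{basic theorem}$(a)$ gives $\alpha p=\alpha\lambda(x)=\lambda(\alpha x)\in \lambda(E)$ since $\alpha x\in E$. Combined with convexity this shows $\lambda(E)$ is a convex cone. The only nontrivial ingredient is the commuting-representative step, and that is supplied directly by $(A3)$; no genuine obstacle arises once one recognizes that condition $(A3)$ is precisely what lets us align two orbit representatives so that $\lambda$ becomes additive on their nonnegative combinations.
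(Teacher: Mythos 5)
Your proof is correct and follows essentially the same route as the paper's: both apply $(A3)$ to replace one orbit representative by one that commutes with the other (spectrality keeping it inside $E$), then use positive homogeneity and Theorem \ref{basic theorem}$(e)$ to make $\lambda$ additive on the convex combination. The only difference is cosmetic — which of the two points gets replaced by its commuting representative.
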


\begin{proof}
	Assume that $E$ is convex and spectral in $\V$. Let $\lu, \lv \in \lambda(E)$, where $u, v \in E$. We show that $t\lu + (1-t) \lv \in \lambda(E)$ for any $0 \leq t \leq 1$. To show this, we apply $(A3)$ in Definition \ref{ftvn} with $q = \lu$ and $c=v$ to get an $x \in \V$ such that
	\[ \lx = q = \lu \quad \text{and} \quad \ip{x}{v} = \ip{\lx}{\lv}. \]
	As $x$ and $v$ commute and  $\lambda$ is positively homogeneous, $tx$ and $(1-t)v$ commute as well. By Theorem \ref{basic theorem}(e), $\lambda \big( tx + (1-t)v \big) = \lambda(tx) + \lambda \big( (1-t)v \big)$. Since $\lx = \lu$ and $E$ is spectral, $x \in [u]\subseteq E$. Also, as $x, v\in E$, because of convexity, $tx + (1-t)v \in E$. Hence,
	\[ t \lu + (1-t)\lv = \lambda(tx) + \lambda \big( (1-t)v \big) = \lambda \big( tx + (1-t)v \big) \in \lambda(E). \]
	Thus, $\lambda(E)$ is convex.
	The last asserted statement follows from the positive homogeneity of $\lambda$.
\end{proof}

The above result together with the equality $E=\lambda^{-1}(\lambda(E))$ shows that 
	 every convex spectral set in $\V$ can be written as the $\lambda$-inverse of a convex set in $\W$.

\begin{theorem} \label{basic theorem2} 
	Consider a FTvN system $(\V,\W,\lambda)$. Let $E$ be a spectral set in $\V$, $d\in \V$, and $\alpha\in \R$. Then the  following are equivalent:
	\begin{itemize}
		\item [$(i)$] $\alpha \geq \ip{d}{z} \,\, \text{for all} \,\, z \in E$.
		\item [$(ii)$] $\alpha \geq \ip{d}{z} \,\, \text{for all} \,\, z \in \overline{\conv(E)}$.
		\item [$(iii)$] $\alpha\geq \ip{\lambda(d)}{\lambda(z)} \,\, \text{for all} \,\, z\in E$.
		\item [$(iv)$] $\alpha\geq \ip{\lambda(d)}{\lambda(z)} \,\, \text{for all} \,\, z \in \overline{\conv(E)}$.
	\end{itemize}
\end{theorem}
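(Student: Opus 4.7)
My plan is to establish (i) $\Leftrightarrow$ (ii), (i) $\Leftrightarrow$ (iii), and (iii) $\Leftrightarrow$ (iv), using three ingredients: the FTvN inequality (A2), the equality clause (A3) combined with the spectral property of $E$, and the convexity of the composite $f(z) := \ip{\lambda(d)}{\lambda(z)}$. The equivalence (i) $\Leftrightarrow$ (ii) is standard: trivially $E \subseteq \overline{\conv(E)}$, and the closed half-space $\{z \in \V : \ip{d}{z} \leq \alpha\}$ is closed and convex, so once it contains $E$ it contains $\overline{\conv(E)}$.

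For (i) $\Leftrightarrow$ (iii), the direction (iii) $\Rightarrow$ (i) is immediate from (A2). Conversely, for (i) $\Rightarrow$ (iii), fix $z \in E$ and apply (A3) with $c = d$ and $q = \lambda(z) \in \mathrm{ran}\,\lambda$ to obtain $z' \in \V$ with $\lambda(z') = \lambda(z)$ and $\ip{d}{z'} = \ip{\lambda(d)}{\lambda(z')} = \ip{\lambda(d)}{\lambda(z)}$. Since $E$ is spectral and $z \in E$, the orbit $[z]$ lies in $E$, so $z' \in E$, and (i) yields $\ip{\lambda(d)}{\lambda(z)} = \ip{d}{z'} \leq \alpha$.

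The main step is (iii) $\Leftrightarrow$ (iv). The direction (iv) $\Rightarrow$ (iii) is immediate since $E \subseteq \overline{\conv(E)}$. For (iii) $\Rightarrow$ (iv), I would verify that $f(z) := \ip{\lambda(d)}{\lambda(z)}$ is continuous and convex on $\V$, after which (iii) extends from $E$ to $\conv(E)$ by convexity and then to $\overline{\conv(E)}$ by continuity. Continuity is immediate from the $1$-Lipschitz property in Theorem \ref{basic theorem}(b). For convexity, let $t \in [0,1]$ and $x,y \in \V$; Theorem \ref{basic theorem}(c) applied with $c = d$ gives the subadditivity
\[ f(tx + (1-t)y) \leq f(tx) + f((1-t)y), \]
and positive homogeneity from Theorem \ref{basic theorem}(a) turns the right-hand side into $tf(x) + (1-t)f(y)$.

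The main obstacle is recognizing the convexity of the spectral composite $z \mapsto \ip{\lambda(d)}{\lambda(z)}$ even though $\lambda$ itself is only positively homogeneous and $1$-Lipschitz. Without this observation, the natural strategy of transferring (iii) from $E$ to $\overline{\conv(E)}$ point by point via (A3) breaks down, because an orbit of a point in $\overline{\conv(E)}$ need not meet $E$ at all; the convexity of $f$ bypasses this orbit-chasing obstruction entirely.
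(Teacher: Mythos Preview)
Your proof is correct and follows essentially the same approach as the paper: the equivalence $(i)\Leftrightarrow(ii)$ is standard, $(i)\Leftrightarrow(iii)$ uses $(A2)$ in one direction and $(A3)$ plus spectrality of $E$ in the other, and the extension to $\overline{\conv(E)}$ rests on the sublinearity inequality of Theorem~\ref{basic theorem}(c) together with positive homogeneity and continuity of $\lambda$. The only cosmetic difference is that you package the last step as ``$f(z)=\ip{\lambda(d)}{\lambda(z)}$ is a continuous convex function, hence its sublevel set is closed and convex,'' whereas the paper writes out the convex-combination computation directly and proves $(i)\Rightarrow(iv)$ rather than $(iii)\Rightarrow(iv)$; the underlying argument is identical.
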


\begin{proof}
	$(i) \Leftrightarrow (ii)$: The implication $(i) \Rightarrow (ii)$ comes from the bilinearity and continuity of the inner product. The reverse implication is obvious. 
	
	\smallgap
	
	$(i) \Leftrightarrow (iii)$: When $(i)$ holds, $\alpha \geq \ip{d}{x}$ for all $x \in [z]$, where $z\in E$. Taking the maximum over $x$ in $[z]$ and using (\ref{intro ftvn}), we get $(iii)$. The reverse implication comes from the inequality $\ip{\lambda(d)}{\lambda(z)} \geq \ip{d}{z}$.
	
	\smallgap
	
	$(i) \Leftrightarrow (iv)$: Assume that $\alpha\geq \ip{d}{x}$ for all $x\in E$. Let $y\in \conv(E)$ so that $y=\sum_{i=1}^{k} t_ix_i$, where $t_i$s are nonnegative numbers adding up to one and $x_i$s belong to $E$. From $(iii)$, we get
	\[ \alpha \geq \ip{\lambda(d)}{\lambda(x_i)} \,\, \text{for all} \,\, i = 1, 2, \ldots, k; \]
	hence,
	\[ \alpha\geq \Big\langle \lambda(d),\, \sum_{i=1}^{k}t_i\lambda(x_i) \Big\rangle. \]
	Now, by (\ref{general sublinearity}) and the positive homogeneity of $\lambda$, we have
	\[ \alpha \geq \Big\langle \lambda(d),\, \sum_{i=1}^{k}\lambda(t_ix_i)\Big \rangle \geq \Big \langle \lambda(d), \lambda \Big( \sum_{i=1}^{k} t_ix_i \Big) \Big \rangle = \ip{\lambda(d)}{\ly}. \]
	As this inequality holds for all $y\in \conv(E)$, by the continuity of $\lambda$, we have 
	\[ \alpha \geq \ip{\lambda(d)}{\lambda(z)} \,\, \text{for all} \,\, z \in \overline{\conv(E)}. \]
	Thus we have $(iv)$. The reverse implication follows from the inequality $\ip{\lambda(d)}{\lambda(z)} \geq \ip{d}{z}$. This completes the proof.
\end{proof}

The implications $(i)\Leftrightarrow (iii)$ and $(i)\Leftrightarrow (iv)$ in the above theorem show the advantage of working with spectral sets: A linear optimization problem over a spectral set in $\V$ can be reformulated as a similar problem in $\W$.  Specifically, {\it for any vector $d$ and a spectral set $E$ in $\V$, the maximum (or supremum) of the function $z \mapsto \ip{d}{z}$ over  $E$ is the same as the maximum (respectively, supremum) of the function $w \mapsto \ip{\lambda(d)}{w}$ over the set $\lambda(E)$ or the set $\lambda\big(\, \overline{\conv(E)} \,\big)$. } Some similar statements can be made for distance and convex functions, see \cite{gowda-ftvn}.

\gap

The following result shows that the property of being spectral is invariant under certain topological and algebraic operations. Recall that, for a set $S$ in $\V$,  $S^*$ and $S^p$ denote, respectively, the {\it dual} and {\it polar cones} of $S$. We note that $S^p=-(S^*)$ and so, if $S$ is a closed convex cone, then $S^{pp}=S$. 

\begin{comment}
The following result shows that the property of being spectral is invariant under certain topological and algebraic operations. Recall that, for a set $S$ in $\V$,  we write $\overline{S}$, $S^\circ$, $\partial(S)$, and $S^\perp$ for the {\it closure, interior, boundary, and orthogonal complement} of $S$, respectively; $\conv(S)$ denotes the {\it convex hull} of $S$. Also, $S^*$ and $S^p$ denote, respectively, the dual and polar cones of $S$. We note that $S^p=-(S^*)$ and so, if $S$ is a closed convex cone, then $S^{pp}=S$. 
\end{comment}

\begin{proposition} \label{spectral invariance prop} Let $E$ be a spectral set in a FTvN system $(\V, \W, \lambda)$. Then the following statements hold:
	\begin{itemize}
		\item [$(a)$] $\overline{E}$, $E^\circ$, and $\partial(E)$ are  spectral.
		\item [$(b)$] If $\V$ is a Hilbert space, then $\overline{\conv(E)}$ is a spectral set.
		\item [$(c)$] If $\V$ is finite dimensional, then $\conv(E)$ is a spectral set; additionally, the convex cone generated by $E$ is also spectral.
		\item [$(d)$] If $\V$ is a Hilbert space, then $E^p$ is a spectral set. In particular, if $\V$ is a Hilbert space and $S$ is a spectral set which is also a subspace in $\V$, then, $S^\perp$ is spectral. 
		\item [$(e)$] If $\V$ is a Hilbert space, then the sum of two compact convex spectral sets in $\V$ is spectral. 
		\item [$(f)$] If $\V$ is finite dimensional, then the sum of two convex spectral sets is spectral.
	\end{itemize}
\end{proposition}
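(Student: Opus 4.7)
The common engine is a consequence of $(A3)$: given any $c, z \in \V$, applying $(A3)$ with $q = \lambda(z)$ produces $z' \in [z]$ with $\ip{c}{z'} = \ip{\lambda(c)}{\lambda(z)}$, and for this $z'$,
\begin{equation*}
\norm{c - z'}^2 = \norm{c}^2 - 2\ip{c}{z'} + \norm{z'}^2 = \norm{\lambda(c) - \lambda(z)}^2.
\end{equation*}
I will call this the \emph{commuting representative} of $z$ relative to $c$. Combined with Theorem \ref{basic theorem2} (which replaces a linear bound over a spectral set by its spectral counterpart) and the strong separation theorem, this will handle all six parts.

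For $(a)$, I would establish $\overline{E} = \lambda^{-1}\bigl(\overline{\lambda(E)}\bigr)$. The inclusion $\subseteq$ is continuity of $\lambda$ (Theorem \ref{basic theorem}(b)). For $\supseteq$, given $w$ with $\lambda(w) = \lim \lambda(x_n)$ for some $x_n \in E$, the commuting representative $w_n$ of $x_n$ relative to $w$ lies in $[x_n] \subseteq E$ (by spectrality of $E$) and satisfies $\norm{w - w_n} = \norm{\lambda(w) - \lambda(x_n)} \to 0$. For the interior, suppose $B(x,r) \subseteq E$ and $y \sim x$; for any $z \in B(y,r)$, the commuting representative $x'$ of $z$ relative to $x$ satisfies $\norm{x - x'} = \norm{\lambda(y) - \lambda(z)} \leq \norm{y-z} < r$ (non-expansiveness), hence $x' \in E$ and $z \in [x'] \subseteq E$. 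For the boundary, $\partial E = \overline{E} \cap \overline{E^c}$ reduces to the closure case, using the trivial fact that complements and intersections of spectral sets are spectral. Part $(d)$ is a direct application of Theorem \ref{basic theorem2} with $\alpha = 0$: the condition $\ip{d}{x} \leq 0$ on $E$ is equivalent to $\ip{\lambda(d)}{\lambda(x)} \leq 0$ on $E$, which depends on $d$ only through $\lambda(d)$, hence is preserved under $d' \sim d$; for a spectral subspace $S$ the identity $-S = S$ gives $S^\perp = S^p$, so spectrality follows.

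For $(b)$ and $(e)$ I invoke the strong separation theorem (Theorem \ref{strong separation theorem}). In $(b)$, if $z \sim y \in \overline{\conv(E)}$ but $z \notin \overline{\conv(E)}$, separation yields $d, \alpha$ with $\ip{d}{z} > \alpha \geq \ip{d}{x}$ for all $x \in E$; Theorem \ref{basic theorem2} then forces
\begin{equation*}
\alpha \geq \ip{\lambda(d)}{\lambda(y)} = \ip{\lambda(d)}{\lambda(z)} \geq \ip{d}{z},
\end{equation*}
a contradiction. For $(e)$, the sum $A + B$ is compact convex; separation gives $d, \alpha$ with $\ip{d}{z} > \alpha \geq \sup_A \ip{d}{\cdot} + \sup_B \ip{d}{\cdot}$, and Theorem \ref{basic theorem2} converts the two suprema to their spectral versions. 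Writing $\lambda(z) = \lambda(a + b)$ and applying the sublinearity of Theorem \ref{basic theorem}(c),
\begin{equation*}
\ip{d}{z} \leq \ip{\lambda(d)}{\lambda(a + b)} \leq \ip{\lambda(d)}{\lambda(a)} + \ip{\lambda(d)}{\lambda(b)} \leq \alpha,
\end{equation*}
again a contradiction.

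Parts $(c)$ and $(f)$ reduce to $(b)$ and $(e)$ via the compactness of $\lambda$-orbits in finite dimension. For $\conv(E)$, any $y = \sum t_i x_i \in \conv(E)$ lies in $\conv\bigl([x_1] \cup \cdots \cup [x_k]\bigr)$, which is compact by Theorem \ref{minkowski}, spectral by $(b)$, and contained in $\conv(E)$, so $[y]$ sits inside it. For the convex cone generated by $E$, positive homogeneity (Theorem \ref{basic theorem}(a)) makes $\cone(E)$ spectral, and the convex hull argument then upgrades this to $\conv(\cone(E))$, which equals the convex cone generated by $E$. For $(f)$, given $y = a + b$ with $a \in A$, $b \in B$, the sets $\conv([a]) \subseteq A$ and $\conv([b]) \subseteq B$ are compact convex spectral by $(c)$, so $(e)$ applies to their sum, a compact convex spectral subset of $A + B$ containing $y$ and hence $[y]$. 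The most delicate step I expect is the separation argument in $(e)$: it works precisely because Theorem \ref{basic theorem}(c) provides the sublinearity in the favourable direction, and because the supremum of a linear functional over $A + B$ splits additively. Finite dimensionality enters $(c)$ and $(f)$ only through the compactness of orbits and Theorem \ref{minkowski}.
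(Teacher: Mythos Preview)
Your proof is correct and follows essentially the same architecture as the paper's: separation plus Theorem~\ref{basic theorem2} for parts $(b)$, $(e)$; compactness of orbits and Theorem~\ref{minkowski} to reduce $(c)$, $(f)$ to the closed cases. Two local differences are worth recording. For $E^\circ$ in $(a)$ you argue directly via commuting representatives that $B(x,r)\subseteq E$ implies $B(y,r)\subseteq E$ whenever $y\sim x$, whereas the paper takes the complement route $E^\circ=(\overline{E^c})^c$; both are fine, and yours is arguably more transparent. For $(d)$ your one-line use of Theorem~\ref{basic theorem2} (the membership condition $\ip{d}{x}\le 0$ on $E$ depends only on $\lambda(d)$) is cleaner than the paper's argument, which first reduces to $E$ a closed convex cone and then runs a separation argument through $E^{pp}=E$; your version avoids both the reduction and the bipolar.
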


\gap

\begin{proof}
	$(a)$ First consider the closure of $E$. Let $x\in \overline{E}$ and $y\in [x]$. We need to show that $y\in \overline{E}$. As $x\in \overline{E}$, there exists a sequence $(x_k)$ in $E$ such that $x_k \to x$. For each $k$, corresponding to $c:=y$ and $q:=\lambda(x_k)$ in Definition \ref{ftvn}, there exists $y_k$ that commutes with $y$ and $\lambda(y_k)=\lambda(x_k)$. We then have
	\[ \norm{y_k - y} = \norm{\lambda(y_k) - \ly} = \norm{\lambda(x_k) - \lx} \leq \norm{x_k - x}, \]
	where the first equality is due to Item $(e)(iii)$ and the (last) inequality follows from Item $(b)$ in Theorem \ref{basic theorem}.
	Since $x_k \to x$, we see that $y_k \to y$, where $y_k\in [x_k]\subseteq E$ (the inclusion is due to $E$ being spectral).
	Thus, $y\in \overline{E}$. Hence, $\overline{E}$ is spectral.\\
	Since the relation $x\sim y$ is an equivalence relation, every spectral set is a union of $\lambda$-orbits; thus the complement of a spectral set is also a spectral set. Hence, $E^c$ is a spectral set. By what has been proved, $\overline{E^c}$ is spectral; thus, $(\overline{E^c})^c$ is also spectral. But the latter set is $E^\circ$ (the interior of $E$). Hence $E^\circ$ is spectral.\\
	Finally, $\partial(E)$, being the intersection of two spectral sets $\overline{E}$ and $(E^\circ)^c$, is also spectral.
	
	\smallgap
	
	$(b)$ Suppose $\V$ is a Hilbert space. Let $x\in \overline{\conv(E)}$. We show that $[x]\subseteq \overline{\conv(E)}$. Suppose, if possible, there is a $y\in [x]$ such that $y\not\in \overline{\conv(E)}$. Then, by the strong separation theorem (see Theorem \ref{strong separation theorem}), there exist $d \in \V$ and $\alpha \in \R$ such that
	\[ \ip{d}{y} > \alpha \geq \ip{d}{z} \,\, \text{for all} \,\, z \in \overline{\conv(E)}, \]
	and, in particular, 
	\[ \ip{d}{y} > \alpha \geq \ip{d}{z} \,\, \text{for all} \,\, z \in E. \]
	This implies, from Theorem \ref{basic theorem2}, 
	\[ \ip{d}{y} > \alpha \geq \ip{\lambda(d)}{\lambda(z)} \,\, \text{for all}\, \,z\in \overline{\conv(E)}.\]
	In particular, we have $\ip{d}{y} > \ip{\lambda(d)}{\lx}$. Since $\lx=\ly$, this implies
	\[ \ip{\lambda(d)}{\ly} \geq \ip{d}{y} > \ip{\lambda(d)}{\ly}, \]
	which is clearly a contradiction. Hence, $[x]\subseteq \overline{\conv(E)}$, proving $(b)$.
	
	\smallgap
	
	$(c)$ Suppose $\V$ is finite dimensional and let $x\in \conv(E)$. We show that $[x]\subseteq \conv(E)$. As $x\in \conv(E)$, we can write $x$ as a convex combination of elements $x_1,x_2,\ldots, x_N$ in $E$. Then the set $P:=\bigcup_{i=1}^{N} [x_i]$ is a spectral set which is contained in $E$. Since $\V$ is now assumed to be finite dimensional, each $\lambda$-orbit $[x_i]$ is compact; hence so is $P$.
	Since $\V$ is finite dimensional, we see that $\conv(P)$ is compact (see Theorem \ref{minkowski}) and, in particular, closed. Thus, by Item $(b)$ applied to the set $P$, $\conv(P)$ is a spectral set. This means,
	\[ [x] \subseteq [\conv(P)] = \conv(P) \subseteq \conv(E). \]
	This proves the spectrality of $\conv(E)$. Finally, due to the positive homogeneity of $\lambda$, the cone generated by $\conv(E)$ is also spectral. Thus we have Item $(c)$.
	
	\smallgap
	
	$(d)$ Since $[tx]=t[x]$ for all $t\geq 0$ in $\R$ and $x\in \V$, we see, by $(b)$, that closed (convex) conic hull of $E$ is also spectral. Since the polar of a set is the same as the polar of its closed conic hull, we assume without loss of generality that $E$ is a closed convex cone. We show that (the closed convex cone) $E^p$ is also spectral. Let $x\in E^p$ and $y\in \V$ with $\ly=\lx$. We claim that $y\in E^p$. If this were not true, then by the strong separation theorem (see Theorem \ref{strong separation theorem}), there would exist a nonzero $d$ in $\V$ such that 
	\[ \ip{d}{y} > 0 \geq \ip{d}{u} \,\, \text{for all} \,\, u \in E^p. \]
	It follows that $d\in E^{pp}=E$ and hence $[d]\subseteq E$. Since $x\in E^p$, we have
	\[ 0 \geq \ip{z}{x} \,\, \text{for all} \,\, z \in [d]. \]
	Taking the maximum over $z$, this results in $0\geq \ip{\lambda(d)}{\lx}$; hence,
	\[ \ip{d}{y} > 0 \geq \ip{\lambda(d)}{\lx} = \ip{\lambda(d)}{\ly}, \]
	which is clearly a contradiction. Hence, $y\in E^p$, proving the spectrality of $E^p$. Now suppose $S$ is a spectral set that is also a subspace. Then, $S^\perp=S^p$ is spectral.
	
	\smallgap
	
	$(e)$ Suppose that $\V$ is a Hilbert space and $E_1$ and $E_2$ are  compact, convex, and spectral in $\V$. As $E_1+E_2$ is convex, we show that $E_1+E_2$ is spectral. Let  $x\in \V$ and  $u\in E_1+E_2$ with $\lx=\lu$. We need to show that $x\in E_1+E_2$.
	Suppose, if possible, $x\not \in E_1+E_2$. Since $E_1+E_2$ is compact and convex (and $\V$ is a Hilbert space), by the strong separation theorem (see Theorem \ref{strong separation theorem}), there exist $c\in \V$ and $\alpha\in \R$ such that 
	\[ \ip{c}{x} > \alpha \geq \ip{c}{y_1+y_2} = \ip{c}{y_1} + \ip{c}{y_2} \,\, \text{for all} \,\, y_1 \in E_1, y_2 \in E_2. \]
	Now, writing $u = u_1 + u_2$, where $u_1\in E_1$ and $u_2\in E_2$, we vary $y_1$ over $[u_1]$ (which is a subset of $E_1$) and $y_2$ over $[u_2]$ (a subset of $E_2$). Applying (\ref{intro ftvn}), this results in
	\[ \ip{c}{x} > \alpha \geq \ip{\lc}{\lambda(u_1)} + \ip{\lc}{\lambda(u_2)} = \ip{\lc}{\lambda(u_1)+\lambda(u_2)} \]
	and
	\[ \ip{\lc}{\lx} > \alpha \geq \ip{\lc}{\lambda(u_1) + \lambda(u_2)}.\] 
	As $\lx = \lu = \lambda(u_1 + u_2)$, we see that
	\[ \ip{\lc}{\lambda(u_1 + u_2)} > \alpha \geq \ip{\lc}{\lambda(u_1) + \lambda(u_2)}, \]
	contradicting Theorem \ref{basic theorem}(c). Hence, $x \in E_1+E_2$, proving the spectrality of the sum.
	
	\smallgap
	
	$(f)$ Suppose $\V$ is finite dimensional with $E_1$ and $E_2$ convex and spectral. As before, let $x\in \V$, $u\in E_1 + E_2$, $\lx = \lu$. Let $u = u_1 + u_2$, where $u_1\in E_1$ and $u_2\in E_2$. Now, for $i=1,2$, $[u_i]$ (which is a subset of $E_i$) is spectral and, since $\V$ is finite dimensional, compact.
	By Item $(c)$ above, $\conv\,[u_i]$ is compact, convex, and spectral. By our previous case, $\conv\,[u_1] + \conv\,[u_2]$ is spectral. Since $u=u_1+u_2\in \conv\,[u_1] + \conv\,[u_2]$ and $\lx=\lu$, we see that 
	\[ x\in \conv\,[u_1] + \conv\,[u_2]\subseteq E_1+E_2, \]
	where the inclusion comes from the convexity of sets $E_1$ and $E_2$. Thus, $E_1+E_2$ is spectral.
\end{proof}

\begin{corollary}
	Consider a FTvN system $(\V, \W, \lambda)$, where $\V$ is finite dimensional. Then, for all $a, b \in \V$,
	\[ \conv\,[a+b] \subseteq \conv\,[a]+\conv\,[b]. \]
\end{corollary}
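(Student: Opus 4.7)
The plan is to chain together three facts from Proposition \ref{spectral invariance prop} together with the trivial inclusion $a+b\in \conv[a]+\conv[b]$. Specifically, since $[a]$ and $[b]$ are spectral sets (each being a single $\lambda$-orbit) and $\V$ is finite dimensional, Proposition \ref{spectral invariance prop}(c) tells us that $\conv[a]$ and $\conv[b]$ are spectral. Then Proposition \ref{spectral invariance prop}(f), applied to these two convex spectral sets, yields that the Minkowski sum $\conv[a]+\conv[b]$ is itself a spectral set.

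Next I would observe that $a+b\in \conv[a]+\conv[b]$ holds trivially, since $a\in [a]\subseteq \conv[a]$ and $b\in [b]\subseteq \conv[b]$. Because $\conv[a]+\conv[b]$ is spectral, it contains the entire $\lambda$-orbit $[a+b]$. Finally, as a sum of two convex sets, $\conv[a]+\conv[b]$ is itself convex, so it must contain the convex hull of $[a+b]$, giving the desired inclusion $\conv[a+b]\subseteq \conv[a]+\conv[b]$.

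There is essentially no obstacle here — the content of the corollary is already packaged into parts (c) and (f) of Proposition \ref{spectral invariance prop}; the only thing left is to recognize that these parts apply to orbits and to invoke the defining property of a spectral set (closure under taking orbits) to pass from the single point $a+b$ to its orbit and then to its convex hull. The proof is therefore just a three-line assembly, with finite dimensionality of $\V$ used implicitly through the two cited parts of the proposition.
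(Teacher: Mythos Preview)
Your proof is correct, but it takes a different route from the paper's. The paper gives a self-contained separation argument: assuming some $u\in\conv[a+b]$ lies outside the compact convex set $\conv[a]+\conv[b]$, it separates with a hyperplane, maximizes over the orbits $[a]$ and $[b]$ to obtain $\ip{c}{u}>\ip{\lc}{\lambda(a)}+\ip{\lc}{\lambda(b)}\geq\ip{\lc}{\lambda(a+b)}$ via sublinearity (Theorem~\ref{basic theorem}(c)), and then derives a contradiction from Theorem~\ref{basic theorem2}. Your argument instead recognizes that the corollary is an immediate consequence of parts~(c) and~(f) of Proposition~\ref{spectral invariance prop}, which were just proved: those parts guarantee that $\conv[a]+\conv[b]$ is a convex spectral set, so containing $a+b$ forces it to contain $[a+b]$ and hence $\conv[a+b]$. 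Your route is shorter and more conceptual, exploiting the machinery already in place; the paper's direct proof is more transparent about the underlying mechanism but essentially re-runs the separation argument embedded in the proof of Proposition~\ref{spectral invariance prop}(e)--(f).
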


\begin{proof}
	As $\V$ is finite dimensional, the orbits $[a]$, $[b]$, and $[a+b]$ are compact; hence their convex hulls are also compact. Suppose $u \in \conv\,[a+b]$ but $u \notin \conv\,[a] + \conv\,[b]$. As the set $\conv\,[a] + \conv\,[b]$ is compact and convex, by the strong separation theorem, there exist $c \in \V$ and $\alpha \in \R$ such that 
	\[ \ip{c}{u} > \alpha \geq \ip{c}{x} + \ip{c}{y} \,\, \text{for all} \,\, x \in [a], y \in [b]. \]
	By applying (\ref{intro ftvn}) and using Theorem \ref{basic theorem} (c), we get
	\[ \ip{c}{u} > \alpha \geq \ip{\lc}{\lambda(a)} + \ip{\lc}{\lambda(b)} \geq \ip{\lc}{\lambda(a+b)}. \]
	However, by specializing Theorem \ref{basic theorem2}, we have
	\[ \ip{\lc}{\lambda(a+b)} \geq \ip{\lc}{z} \,\, \text{for all} \,\, z \in \conv\,[a+b]. \]
	As $u\in \conv\,[a+b]$, we see that
	\[ \ip{c}{u} > \alpha \geq \ip{\lc}{\lambda(a+b)} \geq \ip{c}{u}, \]
	thus reaching a contradiction. Hence, the stated inclusion follows.
\end{proof}

\textbf{Remark.} It is known, see \cite{jeong-gowda-spectral cone}, that if $E$ is a spectral set in a Euclidean Jordan algebra, then the sets $-E$ and $E^*$ (the dual of $E$) are also spectral. A similar statement can easily be verified in the setting of a normal decomposition system. However, we do not know if these hold in general FTvN systems. Hence, we do not know if we can go from (the spectrality of) $E^p$ to $E^*:=-E^p$.

%%%%%%%%%%%%%%%%%%%%%%%%%%%%%%%%%%%%%%%%%%%%%%%%%%%%%%%%%%%%%%%
\section{The center of a FTvN system}
In this section, we make an in-depth study of commutativity property. We begin with a definition.

\begin{definition}
	Let $(\V, \W, \lambda)$ be a FTvN system. For any $x\in \V$, let 
	\[ C(x) := \{y\in \V : \text{$y$ commutes with $x$} \}. \]
	The set $C := \bigcap_{x\in \V} C(x)$ is called the center of $(\V, \W, \lambda)$.
\end{definition}

To illustrate, we consider Example \ref{rn} with $n\geq 2$. If $e_1, e_2, \ldots, e_n$ are the standard coordinate vectors in $\Rn$ (so $e_k$ has $1$ in its $k$th slot and zeros elsewhere), then $y = (y_1, y_2, \ldots, y_n) \in C(e_k)$ if and only if $y_k \geq y_i$ for all $i = 1, 2, \ldots, n$. It is easy to see that $C = \R\,e$, where $e$ is the vector of ones in $\Rn$.
We specifically note that $e_2 \notin C(e_1)$ while $e_2 \in [e_1]$. Thus, $C(e_1)$ is not a spectral set.

\begin{proposition} \label{prop: subspace}
	In a FTvN system $(\V, \W, \lambda)$, the following hold:
	\begin{itemize}
		\item [$(a)$] For every $x\in \V$, $C(x)$ is a closed convex cone. Also, $[x] \cap C(x) = \{x\}$.
		\item [$(b)$] If $\lx = -\ly$ for some $x, y\in \V$, then $x = -y$ and $x, y\in C$.
		\item [$(c)$] $C$ is a closed (linear) subspace of $\V$ and $\lambda$ is linear on it.
		\item [$(d)$] $C = \{x \in \V : \lambda(-x) = -\lx\}$.
		\item [$(e)$] If $x$ and $-x$ commute, then $x \in C$. Consequently, $C = \{x \in \V : \text{$x$ and $-x$ commute}\}$.
	\end{itemize}
\end{proposition}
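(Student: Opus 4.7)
The plan is to prove the five items roughly in the given order, exploiting Theorem \ref{basic theorem}(e) (which allows switching between the inner-product definition of commutativity and the additivity $\lambda(x+y)=\lambda(x)+\lambda(y)$) as the central tool, together with $(A2)$ and the sublinearity in Theorem \ref{basic theorem}(c).

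For $(a)$, the closedness of $C(x)$ follows from the continuity of $\lambda$ (Theorem \ref{basic theorem}(b)) and of the inner product. To show $C(x)$ is a convex cone, I would fix $y_1,y_2\in C(x)$ and $t,s\geq 0$. Scalar closure is immediate since $\langle x,ty\rangle = t\langle\lambda(x),\lambda(y)\rangle = \langle\lambda(x),\lambda(ty)\rangle$ using positive homogeneity of $\lambda$. For additive closure, I would estimate $\langle x,y_1+y_2\rangle$ from above by $(A2)$, then by Theorem \ref{basic theorem}(c) bound $\langle\lambda(x),\lambda(y_1+y_2)\rangle\leq \langle\lambda(x),\lambda(y_1)+\lambda(y_2)\rangle$, and finally recognize that the right side equals $\langle x,y_1\rangle+\langle x,y_2\rangle=\langle x,y_1+y_2\rangle$; this forces equality throughout, so $y_1+y_2\in C(x)$. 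For $[x]\cap C(x)=\{x\}$, if $y\in[x]\cap C(x)$, then $\|x\|=\|y\|=\|\lambda(x)\|$ and $\langle x,y\rangle=\langle\lambda(x),\lambda(y)\rangle=\|\lambda(x)\|^2=\|x\|^2$, so $\|x-y\|^2=0$.

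For $(b)$, assuming $\lambda(x)=-\lambda(y)$, I would combine $(A2)$ giving $\langle x,y\rangle\leq\langle\lambda(x),\lambda(y)\rangle=-\|\lambda(x)\|^2=-\|x\|^2$ with Cauchy--Schwarz and $(A1)$ giving $\langle x,y\rangle\geq -\|x\|\,\|y\|=-\|x\|^2$; equality in Cauchy--Schwarz with $\|x\|=\|y\|$ forces $y=-x$. Thus $\lambda(-x)=-\lambda(x)$, and for arbitrary $z\in\V$, applying $(A2)$ to both $(x,z)$ and $(-x,z)$ yields $\langle x,z\rangle\leq\langle\lambda(x),\lambda(z)\rangle$ and $-\langle x,z\rangle\leq\langle-\lambda(x),\lambda(z)\rangle$, so equality holds and $x,y\in C$.

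For $(c)$, $C$ is the intersection of the closed convex cones $C(x)$ from $(a)$, hence itself closed and a convex cone; the remaining task is closure under negation. If $c\in C$, then $c$ commutes with $-c$, so by Theorem \ref{basic theorem}(e) we get $0=\lambda(0)=\lambda(c)+\lambda(-c)$, i.e.\ $\lambda(-c)=-\lambda(c)$. Then for any $z\in\V$, $\langle -c,z\rangle = -\langle\lambda(c),\lambda(z)\rangle = \langle\lambda(-c),\lambda(z)\rangle$, so $-c\in C$. Linearity of $\lambda$ on $C$ then follows by combining positive homogeneity, the identity $\lambda(-c)=-\lambda(c)$, and Theorem \ref{basic theorem}(e) applied to any two elements $c_1,c_2\in C$.

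For $(d)$, the inclusion $\subseteq$ was shown in $(c)$; conversely, if $\lambda(-x)=-\lambda(x)$, setting $y=-x$ puts us in the hypothesis of $(b)$, giving $x\in C$. For $(e)$, if $x$ and $-x$ commute, Theorem \ref{basic theorem}(e) gives $\lambda(x)+\lambda(-x)=\lambda(0)=0$, whence $(d)$ yields $x\in C$; the converse direction of the last claim is immediate since every element of $C$ commutes with $-x\in\V$. The only subtle step is the additive closure of $C(x)$ in $(a)$, which is the one place where the sublinearity bound from Theorem \ref{basic theorem}(c) must be combined with $(A2)$ in just the right direction to force equality.
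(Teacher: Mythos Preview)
Your proposal is correct and follows essentially the same global structure as the paper's proof, with two local arguments handled differently. For the additive closure of $C(x)$ in $(a)$, the paper takes $z\in[x]$, bounds $\ip{z}{u+v}\leq \ip{\lx}{\lu}+\ip{\lx}{\lv}=\ip{x}{u+v}$, and then maximizes over $z\in[x]$ using \eqref{intro ftvn} to get $\ip{\lx}{\lambda(u+v)}\leq\ip{x}{u+v}$; you instead sandwich $\ip{x}{y_1+y_2}$ between itself via $(A2)$ and the sublinearity inequality of Theorem~\ref{basic theorem}(c), which is a bit more direct. For $(b)$, the paper shows $x+y=0$ by specializing the sublinearity bound $\ip{\lc}{\lambda(x+y)}\leq\ip{\lc}{\lx+\ly}=0$ to $c=x+y$, whereas you use the equality case of Cauchy--Schwarz; both arrive at $y=-x$ with the same subsequent verification that $x\in C$. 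The remaining parts $(c)$--$(e)$ match the paper almost verbatim.
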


\begin{proof}
	$(a)$ Fix $x\in \V$. Since $\lambda$ is continuous and positively homogeneous (see Theorem \ref{basic theorem}), $C(x)$ is a closed cone. We now prove that it is convex. Let $u,v\in C(x)$. Then, for any $z\in [x]$,
	\begin{align*}
		\ip{z}{u+v} & = \ip{z}{u} + \ip{z}{v}  \\
		& \leq \ip{\lambda(z)}{\lu} + \ip{\lambda(z)}{\lv} \\
		& = \ip{\lx}{\lu} + \ip{\lx}{\lv} \\
		& = \ip{x}{u} + \ip{x}{v}.                                       
	\end{align*} 
	Taking the maximum over $z$ in $[x]$ we get $\ip{\lx}{\lambda(u+v)} \leq \ip{x}{u+v}$. As the reverse inequality is obvious, we get the equality $\ip{x}{u+v} = \ip{\lx}{\lambda(u+v)}$. This proves that $u+v\in C(x)$. Hence, $C(x)$ is a closed convex cone.
	
	Now we show that $[x] \cap C(x) = \{x\}$. As $\lambda$ is norm-preserving, $x$ commutes with itself;  so
	$x\in [x]\cap C(x)$. If $y\in [x]\cap C(x)$, then $y$ commutes with $x$ and $\ly=\lx$. From Theorem \ref{basic theorem}(e),
	\[ \norm{x-y} = \norm{\lx - \ly} = 0. \]
	Thus, $y=x$. 
	
	\smallgap
	
	$(b)$ Let $\lx=-\ly$. Then, $\lx+\ly=0$. Applying Theorem \ref{basic theorem}, for any $c\in \V$ we have  
	$\ip{\lc}{\lambda(x+y)} \leq \ip{\lc}{\lx + \ly} = 0$. Specializing this to $c = x+y$, we get $\lambda(x+y)=0$, hence $x+y=0$ (as $\lambda$ is norm-preserving). This shows that $x=-y$. Now we show that $x\in C$. For any $z\in \V$, we have
	\[ \ip{x}{z} \leq \ip{\lx}{\lambda(z)} = \ip{-\lambda(-x)}{\lambda(z)} = -\ip{\lambda(-x)}{\lambda(z)} \leq -\ip{-x}{z}=\ip{x}{z}. \]
	This shows that $\ip{x}{z} = \ip{\lx}{\lambda(z)}$. So, $x$ commutes with (every) $z\in \V$; hence $x\in C$. similarly, $y\in C$.
	
	\smallgap
	
	$(c)$ Clearly $C$, being the intersection of closed convex cones, is also a closed convex cone. To show that it is a subspace, we show that $x\in C$ implies $-x\in C$. Suppose $x\in C$. Then, $x$ and $-x$ commute; hence by Item $(e)$ of Theorem \ref{basic theorem}, $0 = \lambda(0) = \lambda(x-x) = \lx + \lambda(-x)$. This implies that $\lambda(-x) = -\lx$. By $(b)$, $-x\in C$. Hence, $C$ is a subspace.
	
	Now, as observed above, for every $x\in C$, $\lambda(-x) = -\lx$. Since $\lambda$ is additive on $C$ (from  Theorem \ref{basic theorem}, Item $(e)$) and positively homogeneous, we see that $\lambda$ is linear on $C$. 
	
	\smallgap
	
	$(d)$ If $x \in C$, by the linearity of $\lambda$ on $C$, we have $\lambda(-x) = -\lx$. On the other hand, if $\lambda(-x) = -\lx$, then by $(b)$, $x \in C$. Thus we have the stated equality.
	
	\smallgap
	
	$(e)$ Suppose $x$ and $-x$ commute. From Theorem \ref{basic theorem} $(e)$, we see that
	\[ 0 = \lambda(0) = \lambda \big( x + (-x) \big) = \lambda(x) + \lambda(-x). \]
	It follows that $\lambda(-x) = -\lambda(x)$; hence $x \in C$ by $(d)$.
\end{proof}

\begin{proposition}\label{e-orbit prop}
	Let $(\V,\W,\lambda)$ be a FTvN system. Then, 
	\[ C = \big\{ u \in \V : [u] = \{u\} \big\}. \]
\end{proposition}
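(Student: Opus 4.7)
The plan is to establish the two inclusions $C \subseteq \{u \in \V : [u] = \{u\}\}$ and $\{u \in \V : [u] = \{u\}\} \subseteq C$ separately. Both directions should be short because all the heavy lifting has been done in Proposition \ref{prop: subspace} and in axiom $(A3)$.

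For the forward inclusion, suppose $u \in C$. I would pick an arbitrary $y \in [u]$ and show that $y = u$. Since $u \in C$, the element $y$ commutes with $u$ by definition; equivalently, $y \in C(u)$. But $y \in [u]$ as well, so $y \in [u] \cap C(u)$. By Proposition \ref{prop: subspace}(a), $[u] \cap C(u) = \{u\}$, forcing $y = u$. Hence $[u] = \{u\}$. (As a double-check, one may verify this directly: since $u$ and $y$ commute with $\ly = \lu$, one has $\ip{u}{y} = \ip{\lu}{\ly} = \|\lu\|^2 = \|u\|^2$ and $\|y\| = \|u\|$, whence $\|u-y\|^2 = 0$.)

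For the reverse inclusion, suppose $u \in \V$ satisfies $[u] = \{u\}$. To prove $u \in C$, I need to show $u$ commutes with every $c \in \V$. This is exactly where condition $(A3)$ from Definition \ref{ftvn} comes into play: given $c \in \V$ and $q := \lambda(u) \in \lambda(\V)$, there exists $x \in \V$ with $\lambda(x) = q = \lambda(u)$ and $\ip{c}{x} = \ip{\lambda(c)}{\lambda(x)}$. Since $\lambda(x) = \lambda(u)$ means $x \in [u] = \{u\}$, we conclude $x = u$, and therefore $\ip{c}{u} = \ip{\lambda(c)}{\lambda(u)}$. As $c$ was arbitrary, $u$ commutes with every element of $\V$, giving $u \in C$.

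I do not anticipate a genuine obstacle here; the only subtle point is recognizing that axiom $(A3)$ guarantees the existence of a commuting partner within every orbit, which collapses to $u$ itself exactly when that orbit is a singleton. This is what makes the characterization clean and explains why both directions can be dispatched in a few lines.
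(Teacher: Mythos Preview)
Your proof is correct and follows essentially the same approach as the paper: the forward inclusion via Proposition \ref{prop: subspace}(a) is identical, and for the reverse inclusion the paper invokes the max-formulation \eqref{intro ftvn} over the singleton orbit $[u]=\{u\}$, which amounts to exactly your direct appeal to axiom $(A3)$.
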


\begin{proof}
	Clearly, $0\in C$ and $[0]=\{0\}$. We show that $u\in C$ if and only if $[u]=\{u\}$ by assuming $u\neq 0$. First, suppose $[u]=\{u\}$. Then, for any $v\in \V$, maximizing $\ip{v}{x}$ over the singleton set $[u]$ and using (\ref{intro ftvn}), we see that $\ip{v}{u} = \ip{\lv}{\lu}$. This proves that $u$ commutes with every  $v\in \V$; hence $u\in C$. 
	
	Conversely, suppose $u \in C$. If $v \in [u]$, then $v \in [u] \cap C(u)$. Since $[u]\cap C(u)= \{u\}$ by Proposition \ref{prop: subspace}(a), we see that $v = u$, proving $[u] = \{u\}$.
\end{proof}

\gap

The following remark summarizes various results characterizing elements of the center.

\gap

\textbf{Remark.} Let $(\V, \W, \lambda)$ be a FTvN system. Then the following are equivalent for an $x\in \V$:
\begin{itemize}
    \item [$(i)$] $x \in C$.
    \item [$(ii)$] $[x] = \{x\}$.
    \item [$(iii)$] $x$ and $-x$ commute.
    \item [$(iv)$] $\lambda(-x) = -\lambda(x)$.
    \item [$(v)$] $[-x] = \{-x\}$.
    \item [$(vi)$] $-x \in C$.
\end{itemize}

\gap

From the previous results, we know that $\lambda(C)$ is a subspace of $\W$ and $\lambda(\V)$ (the range of $\lambda$) is a convex cone. Our next result relates these two sets and characterizes when  the range of $\lambda$ can be pointed or a subspace.

\begin{corollary}\label{lineality result}
	Let $(\V,\W,\lambda)$ be a FTvN system. Then $\lambda(C)$ is the lineality space of the convex cone $\lambda(\V)$, that is,
	\[ \lambda(C) = \lambda(\V) \cap -\lambda(\V). \]
	Consequently,
	\begin{itemize}
		\item [$(a)$] $\lambda(\V)$ is pointed if and only if $C = \{0\}$, and
		\item [$(b)$] $\lambda(\V)$ is a subspace (of $\W)$ if and only if $C = \V$.
	\end{itemize}
\end{corollary}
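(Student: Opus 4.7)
The plan is to establish the displayed set equality $\lambda(C) = \lambda(\V) \cap -\lambda(\V)$ by double inclusion, leaning entirely on the characterizations of center elements summarized in the remark preceding the corollary (in particular, $x\in C \Leftrightarrow \lambda(-x)=-\lambda(x)$, and the consequence from Proposition~\ref{prop: subspace}(b) that $\lambda(x)=-\lambda(y)$ forces $x=-y$ with $x,y\in C$). Once this is done, parts $(a)$ and $(b)$ will follow from standard convex-cone facts, together with the norm-preserving property of $\lambda$.

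For the inclusion $\lambda(C) \subseteq \lambda(\V) \cap -\lambda(\V)$: take $w = \lambda(x)$ with $x\in C$. Clearly $w\in \lambda(\V)$. By Proposition~\ref{prop: subspace}(d), $\lambda(-x) = -\lambda(x) = -w$, so $-w \in \lambda(\V)$, i.e., $w \in -\lambda(\V)$. For the reverse inclusion, let $w \in \lambda(\V) \cap -\lambda(\V)$. Write $w = \lambda(x)$ and $-w = \lambda(y)$ for some $x,y\in \V$. Then $\lambda(x) = -\lambda(y)$, so by Proposition~\ref{prop: subspace}(b), $x = -y$ and $x,y \in C$. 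Therefore $w = \lambda(x) \in \lambda(C)$.

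Since $\lambda(\V)$ is a convex cone (Theorem~\ref{basic theorem}(d)), its lineality space is exactly $\lambda(\V) \cap -\lambda(\V)$, and we have just identified it with $\lambda(C)$.

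For $(a)$: $\lambda(\V)$ is pointed iff its lineality space is $\{0\}$ iff $\lambda(C) = \{0\}$. By Proposition~\ref{prop: subspace}(c), $\lambda$ is linear on $C$, and since $\lambda$ is norm-preserving on all of $\V$, its restriction to $C$ is injective. Hence $\lambda(C) = \{0\}$ iff $C = \{0\}$. For $(b)$: $\lambda(\V)$ is a subspace of $\W$ iff it equals its lineality space, i.e., iff $\lambda(\V) = \lambda(C)$. The inclusion $\lambda(C) \subseteq \lambda(\V)$ is automatic, so the condition reduces to $\lambda(\V) \subseteq \lambda(C)$: every $x \in \V$ satisfies $\lambda(x) = \lambda(c)$ for some $c \in C$. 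By Proposition~\ref{e-orbit prop}, $[c] = \{c\}$, and $x \in [c]$ forces $x = c \in C$. Thus $\V \subseteq C$, so $C = \V$; the converse is trivial. No step looks like a serious obstacle here, as everything funnels into the already-established characterizations of $C$; the only care needed is in part $(b)$, to avoid the pitfall of concluding $\lambda(\V) = \lambda(C)$ directly implies $\V = C$ without invoking the singleton-orbit property.
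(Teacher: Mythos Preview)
Your proof is correct and follows essentially the same route as the paper's: both directions of the set equality are obtained via Proposition~\ref{prop: subspace} (items (b) and (d)), and parts $(a)$ and $(b)$ are deduced exactly as in the paper using the norm-preserving property and Proposition~\ref{e-orbit prop}. The only cosmetic difference is that the paper phrases the forward inclusion via ``linearity of $\lambda$ on $C$'' rather than citing item (d) directly.
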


\begin{proof}
	We first prove the equality $\lambda(C) = \lambda(\V) \cap -\lambda(\V)$. If $q \in \lambda(C)$, then $q = \lu$ for some $u \in C$. By the linearity of $\lambda$ on $C$, $q = \lu = -\lambda(-u)$; hence $q \in \lambda(\V)\cap -\lambda(\V)$. Conversely, if $q \in \lambda(\V) \cap - \lambda(\V)$, then $q = \lx = -\ly$ for some $x, y \in \V$; by Proposition \ref{prop: subspace}, $x \in C$. Thus, $q = \lx \in \lambda(C)$.
	
	\smallgap
	
	$(a)$ From the above equality, $\lambda(\V)$ is  pointed if and only if $\lambda(C) = \{0\}$. As $\lambda$ is norm-preserving, this can hold if and only if $C = \{0\}$.
	
	\smallgap
	
	$(b)$ Suppose $\lambda(\V)$  is a subspace of $\W$. Then, by the above equality, $\lambda(C) = \lambda(\V)$. Then, for any $x \in \V$, there is a $u \in C$ such that $\lx = \lu$. As $[u] = \{u\}$ from the previous result, we see that $x = u \in C$. Thus, $\V \subseteq C$ proving $\V = C$. On the other hand, if $\V = C$, then, by the linearity of $\lambda$ on $C$, $\lambda(\V) = \lambda(C)$ is a subspace of $\W$.
\end{proof}

\begin{proposition}
	Let $(\V,\W,\lambda)$ be a FTvN system. Then, 
	\begin{itemize}
		\item [$(a)$] Every subset of $C$ is a spectral set. 
		\item [$(b)$] The orthogonal complement $C^\perp$ of $C$ in $\V$ is a spectral set.
	\end{itemize}
\end{proposition}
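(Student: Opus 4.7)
For part $(a)$, the plan is to invoke Proposition \ref{e-orbit prop}, which characterizes the center as $C = \{u \in \V : [u] = \{u\}\}$. Every $u \in C$ therefore has a singleton $\lambda$-orbit, so for any subset $S \subseteq C$ one has
\[
[S] = \bigcup_{u \in S} [u] = \bigcup_{u \in S} \{u\} = S.
\]
Since a set is spectral precisely when it is a union of $\lambda$-orbits (equivalently, when $x \in S$ implies $[x] \subseteq S$), this immediately yields that every $S \subseteq C$ is spectral.

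For part $(b)$, the plan is to fix $x \in C^\perp$ and an arbitrary $y \in [x]$, and show $y \in C^\perp$. The driving observation is that every $c \in C$ commutes with every element of $\V$, so in particular
\[
\ip{c}{x} = \ip{\lc}{\lx} \quad \text{and} \quad \ip{c}{y} = \ip{\lc}{\ly}.
\]
Because $\lx = \ly$, the two right-hand sides are equal, forcing $\ip{c}{y} = \ip{c}{x} = 0$, where the last equality uses $x \in C^\perp$. Since $c \in C$ was arbitrary, $y \in C^\perp$, so $C^\perp$ is closed under passage to $\lambda$-orbits and hence is spectral.

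I do not foresee a serious obstacle in either part: both reductions rest on the commutativity characterization of elements of $C$ --- the singleton-orbit form of Proposition \ref{e-orbit prop} for $(a)$, and the defining commutativity identity $\ip{c}{x} = \ip{\lc}{\lx}$ for $(b)$. The only subtlety worth flagging is the bookkeeping step of recognizing that ``spectral'' is equivalent to ``closed under taking $\lambda$-orbits,'' a fact recorded in the list of definitions in Section 3.
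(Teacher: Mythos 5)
Your proposal is correct and follows essentially the same route as the paper: part $(a)$ via the singleton-orbit characterization of $C$ from Proposition \ref{e-orbit prop}, and part $(b)$ via the chain $\ip{c}{y} = \ip{\lc}{\ly} = \ip{\lc}{\lx} = \ip{c}{x} = 0$ for $c \in C$, which is exactly the paper's computation. No gaps.
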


\begin{proof}
	$(a)$ Let $D \subseteq C$. Then, for any $x \in D$, we see that $[x] = \{x\}$ by Proposition \ref{e-orbit prop}. It follows that $[x] = \{x\} \subseteq D$. Thus, $D$ is a spectral set.
	
	\smallgap
	
	$(b)$ Let $v\in C^\perp$ and $x\in [v]$. Then, for all $c\in C$,
	\[ \ip{x}{c} = \ip{\lx}{\lc} = \ip{\lv}{\lc} = \ip{v}{c} = 0. \]
	Hence, $x\in C^\perp$, that is, $[v]\subseteq C^\perp$. It follows that $C^\perp$ is a spectral set.
\end{proof}

\begin{theorem} (Decomposition Theorem)
	Let $(\V,\W,\lambda)$ be a FTvN system, where $\V$ is a Hilbert space. With $C$ denoting the center, we have the following:
	\begin{itemize}
		\item [$(a)$] $\V = C + C^\perp$.
		\item [$(b)$] $(C,\W,\lambda)$ is a FTvN system whose center is $C$ (so any two elements in this system commute).
		\item [$(c)$] $(C^\perp, \W,\lambda)$ is a FTvN system whose center is $\{0\}$.
	\end{itemize} 
\end{theorem}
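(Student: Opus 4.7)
Part (a) is immediate from the Hilbert space projection theorem: Proposition~\ref{prop: subspace} shows that the center $C$ is a closed subspace of $\V$, and since $\V$ is Hilbert this gives $\V = C + C^\perp$.

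For part (b), I would verify conditions $(A1)$--$(A3)$ of Definition~\ref{ftvn} for $\lambda$ restricted to $C$. Axioms $(A1)$ and $(A2)$ are inherited from the ambient system. For $(A3)$, given $c \in C$ and $q \in \lambda(C)$, write $q = \lambda(u)$ with $u \in C$ and take $x := u$. Then $\lambda(x) = q$, and since $c$ and $u$ both belong to $C$ they commute by the very definition of the center, giving $\ip{c}{x} = \ip{\lc}{\lx}$. The same observation shows that every pair in $C$ commutes within the restricted system, so the center of $(C, \W, \lambda)$ is all of $C$.

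For part (c), I would first check that $(C^\perp, \W, \lambda)$ is a FTvN system. Axioms $(A1)$ and $(A2)$ are again inherited. For $(A3)$, take $c \in C^\perp$ and $q = \lambda(u)$ with $u \in C^\perp$. Applying $(A3)$ in the ambient system produces $x \in \V$ with $\lambda(x) = q$ and $\ip{c}{x} = \ip{\lc}{q}$; the decisive ingredient is that $C^\perp$ is a spectral set (by the earlier proposition), so the orbit $[u]$ is contained in $C^\perp$ and hence $x \in [u] \subseteq C^\perp$, as required.

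The main obstacle is showing that the center of $(C^\perp, \W, \lambda)$ is $\{0\}$. Let $d$ belong to this center; the plan is to prove that $d$ commutes with every element of $\V$ (so that $d \in C$, whence $d \in C \cap C^\perp = \{0\}$). For an arbitrary $x \in \V$, use part (a) to decompose $x = x_1 + x_2$ with $x_1 \in C$ and $x_2 \in C^\perp$. Since $x_1 \in C$ commutes with $x_2$, Theorem~\ref{basic theorem}$(e)$ yields $\lambda(x) = \lambda(x_1) + \lambda(x_2)$. Moreover, $x_1$ commutes with $d$ in the ambient system and $\ip{x_1}{d} = 0$, which together force $\ip{\lambda(x_1)}{\lambda(d)} = 0$. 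Combined with $\ip{d}{x_2} = \ip{\lambda(d)}{\lambda(x_2)}$ (commutativity of $d$ and $x_2$ in $(C^\perp, \W, \lambda)$),
\[
  \ip{\lambda(d)}{\lambda(x)} = \ip{\lambda(d)}{\lambda(x_1)} + \ip{\lambda(d)}{\lambda(x_2)} = 0 + \ip{d}{x_2} = \ip{d}{x_1} + \ip{d}{x_2} = \ip{d}{x},
\]
so $d$ commutes with every $x \in \V$ and thus $d \in C$. The key structural insight making this argument go through is the additivity $\lambda(x_1 + x_2) = \lambda(x_1) + \lambda(x_2)$ whenever $x_1 \in C$, which is precisely what permits commutativity on $C^\perp$ to be lifted to commutativity on all of $\V$.
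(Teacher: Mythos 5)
Your proposal is correct and follows essentially the same route as the paper: verify that the axioms restrict to $C$ and $C^\perp$ (your explicit use of the spectrality of $C^\perp$ to check $(A3)$ is exactly what the paper's ``we easily verify'' elides), then show that an element $d$ of the center of $(C^\perp,\W,\lambda)$ commutes with every $x=x_1+x_2\in C+C^\perp=\V$. The only cosmetic difference is at the last step: you invoke the exact additivity $\lambda(x_1+x_2)=\lambda(x_1)+\lambda(x_2)$ (valid since $x_1\in C$ commutes with everything), whereas the paper sandwiches $\ip{\lambda(d)}{\lambda(x_1+x_2)}$ between the sublinearity bound of Theorem~\ref{basic theorem}$(c)$ and the FTvN inequality; both yield the same identity.
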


\begin{proof}
	$(a)$ As $\V$ is a Hilbert space and $C$ is a closed subspace of $\V$, this is clear.
	
	\smallgap
	
	$(b)$ Note that $C$, being a closed subspace of $\V$, is a Hilbert space and $\lambda$ restricted to $C$ is norm-preserving. Also, for any $u\in C$, $[u]\subseteq C$ (from the previous result). The defining properties of the FTvN system $(\V,\W,\lambda)$ carry over to that of $(C,\W,\lambda)$. Since any two elements in $C$ commute, the center of this new system is $C$.
	
	\smallgap
	
	$(c)$ As in Item $(b)$, we easily verify that $(C^\perp, \W,\lambda)$ is a FTvN system. We now describe its center. Suppose $d\in C^\perp$ commutes with every element in $C^\perp$, that is, $\ip{d}{x} = \ip{\lambda(d)}{\lx}$ for all $x \in C^\perp$. Now, take $x\in C^\perp$ and $y\in C$. Then, we have $\ip{d}{y} = \ip{\lambda(d)}{\ly}$, and thus 
	\begin{align*}
		\ip{d}{x+y} & = \ip{d}{x} + \ip{d}{y}                       \\
		& = \ip{\lambda(d)}{\lx} + \ip{\lambda(d)}{\ly} \\
		& = \ip{\lambda(d)}{\lx + \ly}                  \\
		& \geq \ip{\lambda(d)}{\lambda(x+y)},           
	\end{align*}
	where the inequality comes from Item $(c)$ in Theorem \ref{basic theorem}. Since the reverse inequality always holds, we have the equality $\ip{d}{x+y} = \ip{\lambda(d)}{\lambda(x+y)}$. As $x+y$ is an arbitrary element of $\V$ (from Item $(a)$), $d$ commutes with every element in $\V$; hence belongs to $C$. As $d\in C^\perp$, we see that $d=0$. Thus, the center of $(C^\perp, \W,\lambda)$ is $\{0\}$.
\end{proof}

The above result allows us to write any FTvN system $(\V,\W,\lambda)$ with $\V$ a Hilbert space as a Cartesian product of two FTvN systems where one has a `full center' and the other has a `trivial center'. We elaborate this as follows. Consider a system as in the above theorem.
Since $\lambda$ is linear on $C$, the image $\W_1:=\lambda(C)$ is a subspace of $\W$. Since $(C^\perp, \W,\lambda)$ is a FTvN system, by Theorem \ref{basic theorem} $(d)$, we see that $\lambda(C^\perp)$ is a convex cone in $\W$, hence $\W_2:=\lambda(C^\perp)-\lambda(C^\perp)$ is a subspace of $\W$. Furthermore, for all $c\in C$ and $d\in C^\perp$, $0 = \ip{c}{d} = \ip{\lc}{\lambda(d)}$. Thus, $\W_1 \perp \W_2$ in $\W$.
Now, let $\V_1 := C$, $\V_2 := C^\perp$, and $\lambda_1$ and $\lambda_2$ denote the restriction of $\lambda$ to $\V_1$ and $\V_2$ respectively. Then, $(\V, \W, \lambda)$ is the Cartesian product of $(\V_1, \W_1, \lambda_1)$ and $(\V_2, \W_2, \lambda_2)$,
where the center of the first system is all of $\V_1$, that is the system has a `full center' and in the second system, the center is $\{0\}$, that is, the system has a `trivial center'. 

\gap

Before giving examples, we introduce a definition.

\begin{definition}
	Let $(\V,\W,\lambda)$ be a FTvN system. An element $e\in \V$ is called a unit element if it is nonzero and $C=\R\,e$.
\end{definition}

A unit element in a FTvN system, if it exists, is unique up to a scalar. Moreover, by Proposition \ref{e-orbit prop}, {\it if $e$ is a unit element, then $[e]=\{e\}$.} A general FTvN system may not have a unit element. However, when the system has a trivial center, we can adjoin a unit element as follows:
Suppose the center of $(\V,\W,\lambda)$ is $\{0\}$. Then, in the product space
$(\R\times \V, \R\times \W,\mu)$, where $\mu(t,x):=(t,\lx)$,
$e:=(1,0)$ is a unit element. 

\gap

\noindent{\bf Example \ref{ips}} ({\it Real inner product space, continued\,}) In the FTvN system $(\V, \R, \lambda)$, where $\V$ is an inner product space and $\lx := \norm{x}$, by the equality case in Cauchy-Schwarz inequality, we see that $C(x)=\R_+\,x$ for all nonzero $x\in \V$ and $C(0)=\V$. Hence, in this setting, $C=\{0\}$.

\gap

\noindent{\bf Example \ref{discrete space}} ({\it Discrete system, continued\,}) Consider the FTvN system $(\V,\V,S)$, where $\lambda=S$ is a linear isometry. In this case, any two elements of $\V$ commute (and, as noted before, every $\lambda$-orbit is a singleton); hence $C=\V$. Conversely, if $(\V,\W,\lambda)$ is a FTvN system with $C=\V$, then (thanks to Items $(a)$ and $(e) \, (ii)$ in Theorem \ref{basic theorem}), $\lambda$ is a linear isometry. Hence, as long as $\dim(\V) \geq 2$, this system does not have a unit element.

\gap

\noindent{\bf Example \ref{space mn}} ({\it The space $M_n$, continued\,})
Consider the FTvN system $(M_n,M_n,\lambda)$, where $M_n$ denotes the set of all $n\times n$ complex matrices and the map $\lambda$ takes $X$ in $M_n$ to $\diag s(X)$ (the diagonal matrix consisting of the singular values of $X$ written in the decreasing order). In this system, if $X$ is in the center, then $X$ and $-X$ commute. By Item $(e)(ii)$ in Theorem \ref{basic theorem}, $\lambda(X)+\lambda(-X)=\lambda(0)=0$. As the singular values of a matrix are always nonnegative, we see that $\lambda(X) = 0$ and (hence) $X=0$. So, in this case, $C=\{0\}$.

\gap

\noindent{\bf Example \ref{eja}} ({\it Euclidean Jordan algebras, continued\,}) Let $e$ denote the unit element in the Euclidean Jordan algebra $\V$, that is, $x\circ e=x$ for all $x\in \V$. We claim that in the FTvN system $(\V,\Rn,\lambda)$, we have $C = \R\,e$.
Since $e=e_1+e_2+\cdots+e_n$ for every Jordan frame $\{e_1,e_2,\ldots,e_n\}$ in $\V$, any scalar multiple of $e$ strongly operator commutes with every element of $\V$ (see the definition in Example \ref{eja}); hence $\R\,e\subseteq C$. Conversely, suppose $x\in C$. Then, $x$ commutes with $-x$ in $(\V,\Rn,\lambda)$. By Theorem \ref{basic theorem}(e), $\lambda(x+(-1)x)=\lx+\lambda(-x)$ and so $0=\lx+\lambda(-x)$. We observe that entries of $\lx$ are in decreasing order while those of $-\lambda(-x)$ are in the increasing order. Since these two vectors are equal, all entries of $\lx$ must be equal. By the spectral theorem \cite{faraut-koranyi}, $x$ must be a multiple of $e$. Thus, $C\subseteq \R\,e$. We conclude that $C=\R\,e$.

\gap

\noindent{\bf Example \ref{hyperbolic}} ({\it Hyperbolic polynomials, continued\,})
Consider the FTvN system $(\V,\Rn,\lambda)$ that  arises via a  polynomial $p$ that is hyperbolic relative to $e\in \V$ and also complete and  isometric. We  claim that $C=\R\,e$. Consider any $x\in \V$. As $\lx$ consists of the roots of the  equation $p(te-x)=0$ and $\lambda(e)$ is the vector of $1$s in $\Rn$, we see that $\lambda(x+e)=\lx+\lambda(e)$. So, by Theorem \ref{basic theorem}(e), $x$  and $e$ commute. Thus, $\R\,e\subseteq C$. Now suppose $0\neq x\in C$. Then, $x$ and $-x$ commute; as in the previous example, $\lx=-\lambda(-x)$. Since the entries of $\lx$ are decreasing, we see that $\lx$ is a multiple of $\lambda(e)$; without loss of generality, let $\lx=\rho\,\lambda(e)$, where $\rho\geq 0$. Then, $\lambda(\rho\,e-x)=0$. As $p$ is complete, $x=\rho\,e$. Hence, $C\subseteq \R\,e$. Thus, $C=\R\,e$.

\gap

Motivated by the above example, we formulate the following result.

\begin{proposition}
	Consider a FTvN system $(\V,\W,\lambda)$, where $\dim(\V)\geq 2$. Then $C=\{0\}$ under one of the following conditions:
	\begin{itemize}
		\item [$(i)$] $\ip{\lx}{\ly} = 0 \Rightarrow x=0 \,\, \text{or} \,\, y=0$.
		\item [$(ii)$] $\big[ 0 = \ip{x}{y} = \ip{\lx}{\ly} \big] \Rightarrow x=0 \,\, \text{or} \,\, y=0$.
	\end{itemize}
\end{proposition}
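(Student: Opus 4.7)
The plan is to observe that condition (i) trivially implies condition (ii) — indeed, if $0 = \ip{x}{y} = \ip{\lx}{\ly}$ then in particular $\ip{\lx}{\ly}=0$, so (i) forces $x=0$ or $y=0$. Thus it suffices to prove $C=\{0\}$ assuming only the weaker hypothesis (ii).

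The core argument is a one-line contradiction. Suppose, for contradiction, that there exists some nonzero $x \in C$. Since $\dim(\V)\geq 2$, the orthogonal complement of the line $\R x$ has positive dimension, so I can pick a nonzero $y\in \V$ with $\ip{x}{y}=0$. Now the point is that membership of $x$ in $C$ means $x$ commutes with every element of $\V$, in particular with $y$, so by the definition of commutativity
\[ \ip{\lx}{\ly} = \ip{x}{y} = 0. \]
Therefore $0 = \ip{x}{y} = \ip{\lx}{\ly}$ while both $x$ and $y$ are nonzero, directly contradicting condition (ii). Hence no such nonzero $x$ exists and $C=\{0\}$.

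There is no real obstacle here: the proposition is really a clean bookkeeping fact packaging the definition of $C$ together with the dimension hypothesis. The only small thing to notice is that the dimension hypothesis is used in exactly one place, namely to produce a nonzero vector orthogonal to $x$; in dimension one this would fail (there $x$ and every nonzero $y$ are parallel, so $\ip{x}{y}$ cannot vanish without $y=0$), which is why the assumption $\dim(\V)\geq 2$ is stated.
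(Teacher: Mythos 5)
Your proof is correct and is essentially identical to the paper's: both reduce to case $(ii)$ via the trivial implication $(i)\Rightarrow(ii)$, then pick a nonzero $y$ orthogonal to a putative nonzero $x\in C$ and use commutativity to contradict $(ii)$. Nothing to add.
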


\begin{proof}
	As $(i) \Rightarrow (ii)$, we prove the result assuming $(ii)$. 
	Suppose $0\neq x\in C$. As $\dim(\V)\geq 2$, there is a $y\neq 0$ in $\V$ that is orthogonal to $x$. Then, $0 = \ip{x}{y} = \ip{\lx}{\ly}$ (the second equality is due to $x$ being in the center). Since $x$ and $y$ are nonzero we reach a contradiction to $(ii)$. Thus $C=\{0\}$. 
\end{proof}

We note here that Examples \ref{discrete space}, \ref{eja with absolute value}, \ref{space mn}, and \ref{sequence space} satisfy condition $(a)$ of the above proposition. 

\gap

The weaker condition $(b)$ in the above result leads to an interesting proposition.

\begin{proposition}
 	Suppose $(\V,\W,\lambda)$ is a FTvN system where $\V$ is finite dimensional with $\dim(\V)\geq 2$ and 
	\[ \big[ 0 = \ip{x}{y} = \ip{\lx}{\ly} \big] \Rightarrow x=0 \,\, \text{or} \,\, y=0. \]
	Then, $\{0\}$ and $\V$ are the only  spectral cones in $\V$. Consequently, for any nonzero $u\in \V$, the convex cone generated by $[u]$ equals $\V$.
\end{proposition}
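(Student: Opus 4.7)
The plan is to first settle the conclusion for \emph{closed} spectral cones, then bootstrap to arbitrary spectral cones by taking closures; the ``consequently'' assertion will then follow immediately from Proposition~\ref{spectral invariance prop}(c). Throughout I would use the standing hypothesis only in the form: $0=\ip{x}{y}=\ip{\lambda(x)}{\lambda(y)}$ forces $x=0$ or $y=0$.

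So suppose for contradiction that $K$ is a closed spectral cone with $\{0\}\neq K\neq \V$. By the connectedness of $\V$, the boundary $\partial(K)=K\setminus K^\circ$ is nonempty, and I first claim it equals $\{0\}$. Given any $0\neq x\in \partial(K)$, the supporting hyperplane theorem (Theorem~\ref{supporting hyperplane theorem}) produces a nonzero $d\in \V$ with $\ip{d}{x}\geq \ip{d}{y}$ for all $y\in K$. Substituting $y=0$ and $y=2x$, both of which lie in $K$ because $K$ is a cone containing $x$, pins down $\ip{d}{x}=0$ and yields $\ip{d}{y}\leq 0$ for every $y\in K$. Since $K$ is spectral, $[x]\subseteq K$, so maximizing $\ip{d}{\cdot}$ over $[x]$ and invoking (\ref{intro ftvn}) gives
\[ 0\geq \sup_{y\in [x]}\ip{d}{y}=\ip{\lambda(d)}{\lambda(x)}\geq \ip{d}{x}=0, \]
so that $0=\ip{d}{x}=\ip{\lambda(d)}{\lambda(x)}$ with both $d,x$ nonzero --- a contradiction. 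Thus $\partial(K)=\{0\}$, and therefore $K^\circ\neq\emptyset$ since $K\neq \{0\}$. Using $\dim(\V)\geq 2$, I would now pick $z\in \V\setminus K$ together with two linearly independent $u,v\in K^\circ$; each of the segments $[z,u]$ and $[z,v]$ must cross $\partial(K)=\{0\}$, so $0$ lies strictly between the endpoints, forcing $u$ and $v$ to be nonzero scalar multiples of $z$ and contradicting their linear independence.

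To lift this to an arbitrary spectral cone $K$, I observe that by Proposition~\ref{spectral invariance prop}(a) the closure $\overline{K}$ is spectral, and being the closure of a convex cone it is also a closed convex cone. The closed case just handled therefore gives $\overline{K}\in\{\{0\},\V\}$. If $\overline{K}=\{0\}$ then $K=\{0\}$; otherwise $\overline{K}=\V$, and since the relative interior of a convex set in a finite-dimensional space agrees with the relative interior of its closure, $K^\circ=(\overline{K})^\circ=\V$, forcing $K=\V$. For the consequence, given nonzero $u\in \V$, the convex cone $K$ generated by $[u]$ is spectral by Proposition~\ref{spectral invariance prop}(c) and contains $u\neq 0$, so the assertion just proved yields $K=\V$.

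The main obstacle I anticipate is the boundary analysis of the closed case: the cone property of $K$ must be leveraged to sharpen the supporting-hyperplane inequality to the equality $\ip{d}{x}=0$, after which the spectral structure of $K$ combined with (\ref{intro ftvn}) must be used to synthesize a forbidden pair $(d,x)$ satisfying $0=\ip{d}{x}=\ip{\lambda(d)}{\lambda(x)}$. Once the closed case is settled, the passage to general spectral cones is a short convex-geometry argument resting on the identity $K^\circ=(\overline{K})^\circ$ for convex sets in finite dimensions.
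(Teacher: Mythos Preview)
Your proof is correct and follows essentially the same route as the paper's: supporting hyperplane at a nonzero boundary point, the spectral property plus (\ref{intro ftvn}) to force $0=\ip{d}{x}=\ip{\lambda(d)}{\lambda(x)}$, then the two-segments-through-$\{0\}$ argument, and finally closure plus relative interiors to handle the non-closed case. Your derivation of $\ip{d}{x}=0$ via $y=0$ and $y=2x$ makes explicit what the paper writes directly, and your citation of Proposition~\ref{spectral invariance prop}(c) for the ``consequently'' clause matches the paper's reasoning.
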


\begin{proof}
	Suppose, if possible, $K$ is a (nonempty)  spectral cone in $\V$ such that $\{0\} \neq K \neq \V$. We first assume that $K$ is closed. As $K$ is nonempty, closed, and different from $\V$, by the connectedness of $\V$, $\overline{K} (=K)\neq K^\circ$. So $\partial(K)=\overline{K}\,\setminus\,K^\circ\neq \emptyset$. We claim that $\partial(K) = \{0\}$. Suppose, if possible $0\neq x\in \partial(K)$. Then, as $\V$ is finite dimensional,
	by the supporting hyperplane theorem \cite{rockafellar}, there is a nonzero $d\in \V$ such that 
	\[ \ip{x}{d} \geq 0 \geq \ip{y}{d} \,\, \text{for all} \,\, y \in K. \]
	As $x\in K$, putting $y=x$ in the above inequality results in $0 \geq \ip{x}{d}$; so, $\ip{x}{d} =0$. Moreover, since $K$ is a spectral set, 
	\[ 0 \geq \max_{y\in [x]} \ip{d}{y} = \ip{\lx}{\lambda(d)} \geq \ip{x}{d} \geq 0. \]
	Hence, $0 = \ip{x}{d} = \ip{\lx}{\lambda(d)}$. Since $x$ and $d$ are nonzero, we reach a contradiction to the imposed condition. Thus, $\partial(K)=\{0\}$. This means that $K\setminus K^\circ =\{0\}$. As $K\neq \{0\}$, we see that $K^\circ \neq \emptyset$. Now, as $K\neq \V$, we let $z\in \V\,\setminus\, K$. Noting that $\dim(\V) \geq 2$, we take two linearly independent elements $u,v\in K^\circ$ and consider 
	line segments $[z,u]$ and $[z,v]$. Then, on the open segments $(z,u)$ and $(z,v)$, we must have points that are in $\partial(K)$. As $\partial(K)= \{0\}$, these boundary points must coincide with zero indicating that $u$ and $v$ are both nonzero multiples of $z$. This contradicts the linear independence of $u$ and $v$. Thus, we cannot have $\{0\}\neq K\neq \V$. So, $\{0\}$ and $\V$ are the only two closed spectral cones in $\V$.\\ Now suppose $K$ is a nonempty nonzero spectral cone. Then, $\overline{K}$ (the closure of $K$) is a nonempty nonzero closed spectral cone (the spectrality comes from Proposition \ref{spectral invariance prop}); so, $\overline{K}=\V$. However, in $\V$ (which is finite dimensional), if the closure of a convex set is $\V$, then the set must be equal to $\V$. (One can see this by working with relative interiors.)
	Hence, $K=\V$.\\
	Now suppose $u$ is any nonzero element in $\V$. Then,  the convex cone generated by $[u]$ is a nonzero spectral cone; by the above, this set is $\V$.
\end{proof}

As an illustration of the above result, consider the space
$M_n$ of Example \ref{space mn}. Then, for any nonzero $D\in M_n$, the set of all nonnegative linear combinations of matrices of the form $UDV$, where $U$ and $V$ are unitary matrices, is equal to $M_n$. 

%%%%%%%%%%%%%%%%%%%%%%%%%%%%%%%%%%%%%%%%%%%%%%%%%%%%%%%%%%%%%%%%%%%%%%%%%%%%%%
\section{Automorphisms}

\begin{definition}
	Let $(\V,\W,\lambda)$ be a FTvN system. An invertible linear transformation $A:\V \to \V$ is said to be an automorphism of the system if $\lambda(Ax)=\lx$ for all $x\in \V$. We denote the set of all automorphisms of $(\V,\W,\lambda)$ by $\Aut(\V,\W,\lambda)$, or simply by $\A$ when the context is clear.
\end{definition}

If $A$ is an automorphism of the FTvN system $(\V,\W,\lambda)$, then, $\norm{Ax} = \norm{\lambda(Ax)} = \norm{\lx} = \norm{x}$ for all $x$, so $A$ is norm/inner product preserving. Because $A$ is an invertible linear transformation, we
see that each automorphism of $\V$ is (by definition) an orthogonal transformation of $\V$. Let ${\cal O}(\V)$ denote the orthogonal group of $\V$ (thus forming a subset of the space of all bounded linear transformations on $\V$). It is easy to see that
\begin{center}
	{\it $\Aut(\V,\W,\lambda)$ is a closed subgroup of ${\cal O}(\V)$.}
\end{center}

The following statements are easy to see for any $A\in \Aut(\V,\W,\lambda)$.

\begin{itemize}
	\item If $x$ and $y$ commute in $(\V,\W,\lambda)$, then $Ax$ and $Ay$ commute.
	\item $A$ coincides with the  identity transformation on (the center) $C$, that is, $Ax = x$ for all $x \in C$ (see Proposition \ref{e-orbit prop}).
\end{itemize}

We now identify a few automorphism groups.

\gap

\noindent{\bf Example \ref{ips}} ({\it Real inner product space, continued\,}) 
In this system, $\Aut(\V,\W,\lambda)={\cal O}(\V)$.

\gap

\noindent{\bf Example \ref{discrete space}} ({\it Discrete space, continued\,}) 
Consider $(\V,\V,S)$, where $S$ is a linear isometry (not necessarily onto). If $A$ is an automorphism of this system, then $SAx=Sx$. Since $S$ is injective, we see that $Ax=x$ for all $x\in \V$; so, $A$ is the identity transformation. Hence in this setting, $\Aut(\V,\W,\lambda) = \{\mathrm{Id}\}$. This conclusion can also be seen by observing that on the center $C$ (which is $\V$ in this example), $A$ is the identity transformation.

\gap

\noindent{\bf Example \ref{rn}} ({\it The Euclidean space $\Rn$, continued\,}) 
Consider the FTvN systems $(\Rn,\Rn,\lambda)$ and $(\Rn,\Rn,\mu)$, where $\lx=x^\downarrow$ and $\mu(x)=|x|^\downarrow$. Then, $\Aut(\Rn, \Rn, \lambda)$ is the set of all $n \times n$ permutation matrices and $\Aut(\Rn, \Rn, \mu)$ is the set of all $n \times n$ signed permutation matrices (which are matrices of the form $DP$, where $D$ is a diagonal matrix whose diagonals are $\pm 1$ and $P$ is a permutation matrix).

\gap

\noindent{\bf Example \ref{eja}} ({\it Euclidean Jordan algebras, continued\,}) 
Here the FTvN system corresponds to a Euclidean Jordan algebra $\V$. Let $\Aut(\V)$ denote the set of all algebra automorphisms of $\V$. These are invertible linear transformations on $\V$ that satisfy the property
\[ A(x\circ y)=Ax\circ Ay \quad (x, y\in \V) \]
{\it We claim that the algebra automorphisms of $\V$ are the same as the automorphisms of the corresponding FTvN system, that is,}
\[ \Aut(\V,\Rn,\lambda) = \Aut(\V). \]
To see this, first suppose $A$ is an algebra automorphism of $\V$. Then, thanks to the spectral decomposition in $\V$, $A$ preserves eigenvalues, that is, $\lambda(Ax)=\lx$ for all $x\in \V$. Hence, by our definition, $A$ is an automorphism of the FTvN system $(\V,\Rn,\lambda)$. To see the converse, let $A$ be an automorphism of the system $(\V,\Rn,\lambda)$. Then $\lambda(Ax)=\lx$ for all $x$. So, $x \geq 0$ if and only if $Ax\geq 0$, where we write $x \geq 0$ to mean that all the eigenvalues of $x$ are nonnegative. Writing $\V_+:=\{x\in \V: x\geq 0\}$ (the symmetric cone of $\V$), we see that $A(\V_+)\subseteq \V_+$. Replacing $A$ by its inverse (which is also an automorphism), we see that $A^{-1}(\V_+)\subseteq \V_+$. Thus, $A(\V_+)=\V_+$. Since $A$ is also orthogonal and $\V$ carries the trace inner product, we conclude (see \cite{faraut-koranyi}, p.~57) that $A\in \Aut(\V)$, that is, $A$ is an algebra automorphism of $\V$.

\gap

\noindent{\bf Example \ref{nds}} ({\it Normal decomposition systems, continued\,}) 
We have observed that a normal decomposition system $(\V,\G,\gamma)$ can be regarded as a FTvN system $(\V,\W,\gamma)$, where $\W = \spn(\gamma(\V))$. Built into the definition of NDS is the condition $\gamma(Ax)=\gamma(x)$ for all $x\in \V$ and all $A\in \G$. It follows that
\[ \G\subseteq \Aut(\V,\W,\gamma). \]
The inclusion here can be proper. For example, if $\V$ is a simple Euclidean Jordan algebra carrying the trace inner product, we can let
$\G$ be the connected component of $\Aut(\V)$ containing the identity map and $\gamma$ be a suitable map (defined by means of a fixed Jordan frame, see \cite{gowda-jeong}). Then, we obtain the NDS $(\V,\G,\gamma)$ and the corresponding FTvN system $(\V,\W,\gamma)$ (which can be identified with $(\V,\Rn,\gamma)$). Clearly, $\G$ is a proper subset of $\Aut(\V,\W,\gamma)$.

\gap

\noindent{\bf Example \ref{sequence space}} ({\it The sequence space, continued\,})
Here, we describe all automorphisms of the FTvN system $\ell_2(\R)$. Consider the complete orthonormal system $\{e_1, e_2, \ldots, e_k,\ldots\}$ in $\ell_2(\R)$, where $e_k$ is the  sequence of zeros and ones with  $1$ appearing in the $k$th slot. Then, for any $x=(x_1, x_2, \ldots) \in \ell_2(\R)$, we can write $x = \sum_{k=1}^{\infty} x_ke_k$. Now, let $A$ be any automorphism of the FTvN system $\ell_2(\R)$ (so $A$ is an invertible linear transformation that preserves all $\lambda$-orbits). Then, for any $k$, $\lambda(Ae_k)=\lambda(e_k)=e_1=(1,0,0,\ldots)$. Using the definition of $\lambda$ and the invertibility of $A$, we see the existence of $\varepsilon_k\in \{1,-1\}$ and a permutation  $\sigma$ of $\mathbb{N}$ such that $Ae_k = \varepsilon_ke_{\sigma(k)}$ for all $k$. 
It follows that for any $x=\sum_{k=1}^{\infty}x_ke_k$, 
\[ Ax=\sum_{k=1}^{\infty}x_k\varepsilon_ke_{\sigma(k)}. \]
It is easy to see that every transformation of the above form is an automorphism of the FTvN system $\ell_2(\R)$. We make two important observations: First, the shift operator $S:(x_1, x_2, \ldots) \mapsto (0, x_1, x_2, \ldots)$ leaves every $\lambda$-orbit invariant, but is not invertible; hence it is not an automorphism. Second, if $A$ is an automorphism and $x\in \ell_2(\R)$ has all nonzero entries, then so does $Ax$. This latter observation shows that the elements $\big( 1, \frac{1}{2}, \frac{1}{3}, \ldots \big)$ and $\big( 0, 1, \frac{1}{2}, \frac{1}{3}, \ldots \big)$ are in the same $\lambda$-orbit, but no automorphism can map one to the other. This motivates us to introduce the definition of orbit-transitive system.

\gap

Given an inner product space $\V$, consider a (closed) subgroup $\G$ of the orthogonal group ${\cal O}(\V)$. We say that $\G$ acts {\it transitively} on a set $E\subseteq \V$ if for all $x, y\in E$, there is an $A\in \G$ such that $Ax=y$. For example, the group $\G$ acts transitively on any $\G$-orbit given by
\[ \mathrm{Orb}_{\G}(x) := \{Ax: A\in \G\}. \]

\begin{definition}
	A FTvN system $(\V,\W,\lambda)$ is said to be an orbit-transitive system if the group $\Aut(\V, \W, \lambda)$ is transitive on every $\lambda$-orbit in $\V$, that is, in $\V$, 
	\[ \big[ x, y \in \V ,\, \ly = \lx \big] \Rightarrow y = Ax \,\, \text{for some} \,\, A \in \Aut(\V, \W, \lambda). \]
\end{definition}

Our first result is a characterization of normal decomposition systems among FTvN systems. (We note an attempt made by Orlitzky in \cite{orlitzky-note}.)

\begin{theorem}
	A FTvN system $(\V,\W,\lambda)$ comes from a NDS if and only if it is orbit-transitive, $\W\subseteq \V$, and $\lambda^2=\lambda$.
\end{theorem}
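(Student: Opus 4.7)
The plan is to prove both directions by direct translation between the NDS axioms and the three FTvN-side properties. For the forward direction, I will verify that an FTvN system induced by an NDS $(\V,\G,\gamma)$ (with $\lambda=\gamma$ and $\W=\spn(\gamma(\V))$) satisfies all three conditions. For the reverse direction, I will construct an NDS by taking the group to be $\Aut(\V,\W,\lambda)$ itself; the substance lies in verifying the NDS axiom ``$x = A\gamma(x)$ for some group element $A$,'' which is precisely where $\lambda^2=\lambda$ and orbit-transitivity combine.

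For the forward direction, $\W=\spn(\gamma(\V))\subseteq\V$ is immediate. Since, for each $x\in\V$, some $A\in\G$ satisfies $x=A\gamma(x)$, we have $\gamma(x)=A^{-1}x$, and the $\G$-invariance of $\gamma$ (applied with $A^{-1}\in\G$) yields $\lambda^2(x)=\gamma(A^{-1}x)=\gamma(x)=\lambda(x)$. If $\lambda(x)=\lambda(y)$, then writing $x=A\gamma(x)$ and $y=B\gamma(y)=B\gamma(x)$ for some $A,B\in\G$ gives $y=(BA^{-1})x$, and the inclusion $\G\subseteq\Aut(\V,\W,\lambda)$ recorded in Example \ref{nds} then shows that the FTvN system is orbit-transitive.

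For the reverse direction, I set $\G:=\Aut(\V,\W,\lambda)$, a closed subgroup of ${\cal O}(\V)$ by the general remark on $\Aut$, and view $\lambda$ as a self-map of $\V$ using $\W\subseteq\V$. The $\G$-invariance $\lambda(Ax)=\lambda(x)$ is the defining condition of $\Aut$, and the inequality $\ip{x}{y}\leq\ip{\lambda(x)}{\lambda(y)}$ is axiom $(A2)$. For the remaining NDS axiom, given $x\in\V$ I set $z:=\lambda(x)\in\V$; then $\lambda(z)=\lambda^2(x)=\lambda(x)$, so $z\in[x]$, and orbit-transitivity supplies $A\in\G$ with $Az=x$, i.e., $x=A\lambda(x)$. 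Thus $(\V,\G,\lambda)$ is an NDS whose induced FTvN system, with codomain $\spn(\lambda(\V))\subseteq\W$, is naturally identified with $(\V,\W,\lambda)$. The main (and essentially only) obstacle is recognizing that the idempotence $\lambda^2=\lambda$ is precisely what places $\lambda(x)$ back inside the orbit $[x]$; once this is observed, orbit-transitivity immediately delivers the required group element, and the rest of the argument is routine.
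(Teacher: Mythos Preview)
Your proof is correct and follows essentially the same route as the paper: in both directions you take $\G=\Aut(\V,\W,\lambda)$ for the converse, use $\lambda^2=\lambda$ to place $\lambda(x)$ in $[x]$, and then invoke orbit-transitivity to produce the required $A$, while the forward direction in both cases uses the $BA^{-1}$ trick with $\G\subseteq\Aut(\V,\W,\lambda)$. If anything, you are slightly more explicit than the paper in deriving $\lambda^2=\lambda$ from the NDS axioms and in noting the identification of codomains $\spn(\lambda(\V))\subseteq\W$.
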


\begin{proof}
	Suppose the FTvN system $(\V,\W,\lambda)$ arises from a NDS $(\V,\G,\gamma)$ so that $\lambda=\gamma$ and $\W:= \spn(\gamma(\V))$. From the properties of NDS, we have $\W\subseteq \V$ and $\lambda^2=\lambda$. We now show that $(\V,\W,\lambda)$ is orbit-transitive. To simplify the notation, let $\A := \Aut(\V,\W,\lambda)$. As observed above, $\G\subseteq \A$. In this setting, 
	\[ [x] = \{y \in \V : \ly = \lx\} = \{y \in \V : \gamma(y)=\gamma(x)\}. \]
	Now, if $u,v\in [x]$, then, $\gamma(u)=\gamma(v)$. Since we are working in a NDS, there exist $A,B\in \G$ such that $u = A\gamma(u)$ and $v = B\gamma(v)$. It follows that $(BA^{-1})(u) = v$ where $BA^{-1}\in \G$. As $\G \subseteq \A$, we see that $C := BA^{-1}\in \A$ with $Cu=v$. Thus, $(\V,\W,\lambda)$ is orbit-transitive. We now prove the converse. Suppose $(\V,\W,\lambda)$ is a FTvN system satisfying the given conditions. We claim that $(\V,\A,\lambda)$ is a NDS by verifying the three conditions in its definition (see the Appendix). Clearly, $\lambda(Ax) = \lx$ for all $x\in \V$ and $A\in \A$. For each $x\in \V$, $\lambda(\lx)=\lx$ and so, $\lx\in [x]$. By orbit-transitivity, there exists $A\in \A$ such that $A\lx=x$. Finally, $\ip{x}{y} \leq \ip{\lx}{\ly}$ for all $x,y \in \V$ from the definition of a FTvN system.
\end{proof}

The FTvN  system $\big (l_2(\R),l_2(\R),\lambda\big )$ is not orbit-transitive, see Example \ref{sequence space} given above. So, this system cannot come from a NDS.

\gap

Now consider an  essentially-simple Euclidean Jordan algebra
$\V$, that is,  $\V$ is either a simple or $\Rn$. In this case, we know that $\V$ can be regarded as a NDS. By the above theorem, the FTvN system $(\V,\Rn,\lambda)$ is orbit-transitive. How about general Euclidean Jordan algebras?
This has been answered recently by Orlitzky \cite{orlitzky-proscribed}. We state his result using our terminology above.

\begin{theorem} (\cite{orlitzky-proscribed}, Theorem 4)
	Suppose $\V$ is a Euclidean Jordan algebra. Then, the following are equivalent:
	\begin{itemize}
		\item [$(i)$] $\V$ is essentially-simple, that is, $\V$ is simple or $\Rn$.
		\item [$(ii)$] Every Jordan frame in $\V$ can be mapped onto any other by an automorphism in $\A := \Aut(\V,\W,\lambda)$.
		\item [$(iii)$] The FTvN system $(\V,\Rn,\lambda)$ is orbit-transitive.
	\end{itemize}
\end{theorem}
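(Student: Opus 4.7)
My plan is to prove the equivalence in a cycle (i)$\Rightarrow$(ii)$\Rightarrow$(iii)$\Rightarrow$(i). Recall from the earlier identification that $\A=\Aut(\V,\Rn,\lambda)$ coincides with the algebra automorphism group $\Aut(\V)$; combined with the spectral decomposition theorem for Euclidean Jordan algebras, this drives all three implications.

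For (i)$\Rightarrow$(ii): when $\V$ is simple, this is the classical transitivity of the algebra automorphism group on ordered Jordan frames (see Faraut-Koranyi \cite{faraut-koranyi}, Theorem IV.2.5). When $\V=\Rn$ with coordinate-wise Jordan product, every Jordan frame is an ordering of the standard basis $\{e_1,\ldots,e_n\}$, and permutation matrices (which are algebra automorphisms of $\Rn$) realize every relabeling. For (ii)$\Rightarrow$(iii): given $x,y\in\V$ with $\lx=\ly$, use the spectral theorem to write
\[
    x=\sum_{i=1}^{n}\lambda_i(x)\,e_i,\qquad y=\sum_{i=1}^{n}\lambda_i(y)\,f_i,
\]
with Jordan frames $\{e_i\}$, $\{f_i\}$ indexed so that the eigenvalues appear in the same (decreasing) order. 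By (ii), there is $A\in\A$ with $Ae_i=f_i$ for every $i$; linearity yields $Ax=y$.

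For (iii)$\Rightarrow$(i), I argue by contrapositive. Suppose $\V$ is not essentially-simple, and decompose $\V=\V_1\oplus\cdots\oplus\V_k$ into simple ideals. Since $\V$ is neither simple nor $\Rn$, we have $k\geq 2$ and some $\V_j$ of rank $r_j\geq 2$. Fix any other summand $\V_i$, two orthogonal primitive idempotents $p_1,p_2\in\V_j$, a primitive idempotent $q\in\V_i$, and scalars $\alpha_1>\beta>\alpha_2>0$. Define
\[
    x:=\alpha_1 p_1+\alpha_2 p_2+\beta q,\qquad y:=\alpha_1 p_1+\beta p_2+\alpha_2 q.
\]
Both spectra equal $(\alpha_1,\beta,\alpha_2,0,\ldots,0)$, so $\lx=\ly$. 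However, any algebra automorphism $A$ of $\V$ permutes the simple ideals (only among mutually isomorphic ones) via some permutation $\pi$ of indices with $A(\V_{\pi^{-1}(l)})=\V_l$, restricting to an algebra isomorphism on each summand. Examining the $\V_{\pi(j)}$-component of $Ax=y$, i.e.\ the equation $A x_j = y_{\pi(j)}$, three subcases arise: if $\pi(j)\notin\{i,j\}$, then $y_{\pi(j)}=0$ forces $x_j=0$, contradicting invertibility of $A$; if $\pi(j)=j$, then the eigenvalue multisets $\{\alpha_1,\alpha_2,0,\ldots\}$ of $x_j$ and $\{\alpha_1,\beta,0,\ldots\}$ of $y_j$ must agree, which fails since $\alpha_2\neq\beta$; if $\pi(j)=i$ (which requires $\V_j\cong\V_i$ and hence $r_i=r_j$), the multisets $\{\alpha_1,\alpha_2,0,\ldots\}$ and $\{\alpha_2,0,\ldots\}$ cannot match since $\alpha_1\neq 0$. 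Hence $y$ does not lie in the $\A$-orbit of $x$, and orbit-transitivity fails.

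The main obstacle is the case analysis in the last implication, in particular covering the subcase where some simple summands are mutually isomorphic and can be swapped by an automorphism; the choice of three distinct positive scalars $\alpha_1>\beta>\alpha_2$ is precisely calibrated so that every admissible $\pi$ produces a multiset mismatch. A minor ancillary point, easily handled, is to verify that the strong form of (ii)--mapping ordered frames term-by-term---is what the classical Faraut-Koranyi result actually provides in the essentially-simple setting; this is where the argument in (ii)$\Rightarrow$(iii) requires care.
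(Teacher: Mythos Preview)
Your proof is correct, but the route differs from the paper's in one key place. The paper does not argue $(iii)\Rightarrow(i)$ directly; instead it cites Gowda--Jeong for the full equivalence $(i)\Leftrightarrow(ii)$ and then gives Orlitzky's short argument for $(iii)\Rightarrow(ii)$: take any two Jordan frames $\{e_1,\dots,e_n\}$ and $\{f_1,\dots,f_n\}$, form $x=\sum_i i\,e_i$ and $y=\sum_i i\,f_i$, and use orbit-transitivity to get $A\in\A$ with $Ax=y$; uniqueness of the spectral decomposition for distinct eigenvalues then forces $Ae_i=f_i$. By contrast, your $(iii)\Rightarrow(i)$ is a direct contrapositive: you build two elements with the same global spectrum but whose restrictions to the simple summands have different spectra, so no algebra automorphism (which must permute the simple ideals) can carry one to the other.

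What each approach buys: Orlitzky's trick is a one-line construction that immediately recovers condition $(ii)$ in its strong (ordered) form, but it leaves the passage back to $(i)$ resting on the cited Gowda--Jeong result. Your contrapositive is longer and requires the case analysis on how an automorphism permutes the simple factors, but it is self-contained---you never invoke the hard direction $(ii)\Rightarrow(i)$ of Gowda--Jeong, since your cycle $(i)\Rightarrow(ii)\Rightarrow(iii)\Rightarrow(i)$ closes without it. The one place to be careful, which you flag yourself, is that your $(ii)\Rightarrow(iii)$ step uses the \emph{ordered} version of $(ii)$; this is exactly what Faraut--Kor\'anyi (Theorem IV.2.5) delivers in the simple case and what permutations give in $\Rn$, so the cycle is sound.
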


In the above result, the equivalence of $(i)$ and $(ii)$ comes from a result of Gowda and Jeong \cite{gowda-jeong}. The implication $(ii) \Rightarrow (iii)$ is known and easy to verify. Orlitzky proves the implication $(iii) \Rightarrow (ii)$ by considering two elements
$x:=1e_1+2e_2+\cdots+ne_n$ and $y=1f_1+2f_2+\cdots+nf_n$ where $\{e_1, e_2, \ldots, e_n\}$ and $\{f_1,f_2,\ldots, f_n\}$ are two arbitrary Jordan frames; since $x$ and $y$ are in the same $\lambda$-orbit, there is an automorphism $A\in \A$ with $Ax=y$. Then, by the uniqueness of spectral decomposition when the eigenvalues are distinct, the Jordan frame $\{e_1, e_2, \ldots, e_n\}$ can be mapped onto $\{f_1,f_2,\ldots, f_n\}$ by $A$.

\gap

\textbf{Remark.} In the setting of a FTvN, we have two types of orbits, namely, the $\lambda$-orbits and $\A$-orbits, with $\A$-orbit being a subset of the $\lambda$-orbit. Recall that a set $E\in \V$ is said to be spectral if $x\in E \Rightarrow [x]\subseteq E$. We define a set $E$ to be {\it weakly spectral} if $x\in E \Rightarrow Orb_{\A}(x)\subseteq E$, that is, $A(E) \subseteq E$ for all $A\in \A$. Clearly, every spectral set is weakly spectral, but the converse may not be true. It is known \cite{gowda-jeong} that in a Euclidean Jordan algebra, every weakly spectral set is spectral if and only if the algebra is essentially-simple.

%%%%%%%%%%%%%%%%%%%%%%%%%%%%%%%%%%%%%%%%%%%%%%%%%%%%%%%%%%%%%%%%%%%%%%%%%%%%%%%%%%%%%%%%%%%%
\section{Majorization}

In this section, we formulate the definition of majorization in FTvN systems. First we recall some standard examples and notation.

\begin{itemize}
	\item Consider the FTvN system  $(\Rn,\Rn,\lambda)$ of Example \ref{rn}.  Given $u,v\in \Rn$, we say that $u$ is {\it weakly majorized} by $v$ and write $u\underset{w}{\prec} v$ if for all natural numbers $k$, $1\leq k\leq n$,
	\[ \sum_{i=1}^{k} u_i^\downarrow \leq \sum_{i=1}^{k} v_i^\downarrow. \] Additionally, if the equality holds when $k=n$, we say that $u$ is {\it majorized} by $v$ and write $u\prec v$ \cite{marshall-olkin}. \\
	{\it A result of Hardy, Littlewood, and P\'{o}lya says that $u\prec v$ if and only if $u = Dv$, where $D$ is an $n\times n$ doubly stochastic matrix} (meaning that all entries of $D$ are nonnegative and each row/column sum is one). Additionally,
	{\it a result of Birkhoff (see \cite{bhatia}) says that every doubly stochastic matrix $D$ is a convex combination of permutation matrices.
	}
	So, in the setting of $\Rn$,
	\[ u \prec v  \,\, \iff \,\, u\in \conv\,[v], \]
	where $[v] = \{Pv : P \in \Sigma_n\} = \{w\in \Rn:\lambda(w)=\lv\}$ is the $\lambda$-orbit of $v$ in $\Rn$ and $\Sigma_n$ is the set of all $n\times n$ permutation matrices. 
	
	\item Let $\Hn$ denote the set of all $n\times n$ complex Hermitian matrices. For any $A \in \Hn$, we let $\lambda(A)$ denote the eigenvalues of $A$ written in the decreasing order. Then, majorization between two complex Hermitian matrices $A$ and $B$ is defined by:
	\[ A\prec B \text{ in } \Hn \,\, \iff \,\, \lambda(A)\prec \lambda(B) \text{ in }\Rn. \]
	In this setting, it is known, see e.g., \cite{alberti-uhlmann}, Theorem 2.2 or \cite{ando}, Theorem 7.1 that  {\it  $A\prec B$ if and only if $A$ is a convex combination of matrices of the form $UBU^*$, where $U$ is a unitary matrix.} Observing that for a complex Hermitian matrix $X$, $\lambda(X)=\lambda(B)$ if and only if $X=UBU^*$ for some unitary matrix $U$, we could state this result as:
	\[ A\prec B \text{ in } \Hn \,\, \iff \,\, A\in \conv\, [B]. \]
	One further observation: The space $\Hn$ of all $n \times n$ complex Hermitian matrices is a Euclidean Jordan algebra in which every algebra automorphism is of the form $X\mapsto UXU^*$ for some unitary matrix $U$.
	
	\item Consider a normal decomposition system $(\V,\G,\gamma)$, see Example \ref{nds}. In this setting, a (group) majorization is defined by \cite{niezgoda-group majorization}:
	\[ x\prec y \,\, \iff \,\, x \in \conv\,\{Ay:A\in \G\}. \]
	With the observation that $(\V,\W,\gamma)$, where $\W := \spn(\lambda(\V))$, is a FTvN system, see \cite{gowda-ftvn}, we note that $[y]=\{Ay:A\in \G\}$; hence, in the FTvN system $(\V,\W,\gamma)$ we have the definition 
	\[ x\prec y \,\, \iff \,\, x \in \conv\, [y]. \]
	
	\item Consider the setting of Example \ref{hyperbolic}: Let $\V$ be a finite dimensional real vector space and $p$ be a hyperbolic  polynomial of degree $n$ on $\V$ (relative to some $e$). Consider the corresponding $\lambda$ map from $\V$  to $\Rn$. For any $y\in \V$, let $[y]:=\{z\in \V:\lambda(z)=\ly\}$ denote the $\lambda$-orbit of $y$. \\
	As in the previous examples, we can define majorization between elements $x$ and $y$ in $\V$ in two ways: Either by the condition $x\in \conv\,[y]$ (which is a condition in $\V)$ or by $\lx\prec \ly$ (which is a condition in $\Rn)$. We now relate these two concepts. 
	
	Based on the validity of the Lax conjecture \cite{lewis et al}, Gurvits \cite{gurvits} has shown that in the
	canonical setting of $\V=\Rn$ and $p(e) = 1$, for any two elements $x, y\in \Rn$ there exist real symmetric
	$n \times n$ matrices $A$ and $B$ such that
	\[ \lambda(tx + sy) = \lambda(tA + sB), \]
	for all $t, s \in  \R$, where the right-hand side denotes the eigenvalue vector of a symmetric matrix.
	This result, when translated to our general $\V$, implies the Lidskii (majorization) property \cite{bhatia}:
	\[ \lx-\ly\prec \lambda(x-y)\quad (x,y\in \V). \]
	This, in particular, yields:
	\[ \lambda(x+y)\prec \lx+\ly\quad (x,y\in \V). \]
	Now suppose that $x,y\in \V$ with $x\in \conv\,[y]$.
	We can then write $x=\sum_{k=1}^{m} t_ky_k$, a convex combination of elements $y_1,y_2,\ldots, y_m$ in $[y]$.
	It follows that
	\[ \lx = \lambda \bigg( \sum_{k=1}^{m} t_k y_k \bigg) \prec \sum_{k=1}^{m} t_k \lambda(y_k) = \ly. \]
	Thus, we arrive at the implication:
	\begin{equation} \label{implication from gurvits}
		x\in \conv\,[y] \Rightarrow \lx \prec \ly.
	\end{equation}
	We note that no assumptions are placed on the hyperbolic polynomial $p$ (such as completeness or isometric property).

\end{itemize}
Motivated by the above examples, we now introduce the concept of majorization in the setting of FTvN systems.

\begin{definition}\label{defn of majorization}
	Let $(\V,\W,\lambda)$ be a FTvN system. Given $x,y\in \V$, we say that $x$ is majorized by $y$ and write $x\prec y$ if 
	$x\in \conv\, [y]$.
\end{definition}

\noindent{\bf Remark.} While we have formulated the definition of majorization in the setting of FTvN systems, one can weaken the definition to include broader systems. With $\V$ and $\W$ denoting real vector spaces and $\lambda:\V \to \W$ denoting a (general) map, one can define majorization $x\prec y$ in $\V$ by the condition $x\in \conv\, [y]$, where $[y]=\{z\in \V: \lambda(z)=\ly\}$. This generalized formulation may be beneficial in some settings (see our hyperbolic example above). Meaningful results, perhaps, are obtained only when $\lambda$ satisfies appropriate conditions.

\gap

Suppose we modify  Definition \ref{defn of majorization} and introduce $*$-majorization: $x \starprec y$ if $x \in \overline{\conv\,[y]}$. Clearly, $x \prec y \Rightarrow x \starprec y$ and the reverse implication holds  when $\V$ is finite dimensional. The following example shows that the two concepts can be different in the infinite dimensional setting. Beyond the example, we will not pursue 
this concept here.

\begin{example}
	Consider the sequence space $\ell_2(\R)$; for any $k \in \mathbb{N}$, let $e^{(k)}$ be the vector with 
	one in the $k$th slot and zeros elsewhere. Take $y = e^{(1)}$. Then, $[y]=\{e^{(1)},e^{(2)},\ldots\}$. Let, for each $n\in \mathbb{N}$,
	\[ x^{(n)}: = \big( \underbrace{\tfrac{1}{n},\, \ldots,\, \tfrac{1}{n}}_{\text{$n$-times}},\, 0,\, 0,\, \ldots \big). \]
	Then, $x^{(n)} \in \conv\,[y]$ for all $n$ and $x_n \to 0$ in $\ell_2(\R)$. We see that $0 \in \overline{\conv\,[y]}$ while $0 \notin \conv\,[y]$. 
\end{example}

\begin{proposition}\label{basic majorization prop} 
	In a FTvN system $(\V,\W,\lambda)$,
	\begin{itemize}
		\item [$(a)$] $x\prec y$ implies $\ip{c}{x} \leq \ip{\lc}{\ly}$ for all $c\in \V$. The reverse implication holds if $\V$ is finite dimensional.
		\item [$(b)$] $x\prec y$ implies $\ip{\lc}{\lx} \leq \ip{\lc}{\ly}$ for all $c \in \V$. The reverse implication  holds if $\V$ is finite dimensional.
		\item [$(c)$] $x\prec y$ and $y\prec x$ if and only if $[x]=[y]$.
	\end{itemize}
\end{proposition}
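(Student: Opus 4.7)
For the forward direction of $(a)$, I would write $x = \sum_{i=1}^{N} t_i y_i$ as a (finite) convex combination with each $y_i \in [y]$ (so $\lambda(y_i) = \ly$) and apply the FTvN inequality $(A2)$ termwise:
\[ \ip{c}{x} = \sum_{i} t_i \ip{c}{y_i} \leq \sum_{i} t_i \ip{\lc}{\lambda(y_i)} = \ip{\lc}{\ly}. \]
For the converse in the finite dimensional case, I would argue by contrapositive. Since $\lambda$ is continuous and norm-preserving, $[y]$ is closed and bounded, hence compact; so $\conv\,[y]$ is compact (and in particular closed) by Theorem \ref{minkowski}. If $x \notin \conv\,[y]$, the strong separation theorem (Theorem \ref{strong separation theorem}) yields $c\in \V$ and $\alpha\in \R$ with $\ip{c}{x} > \alpha \geq \ip{c}{z}$ for every $z \in \conv\,[y]$. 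Restricting to $z\in [y]$ and invoking the equality part of (\ref{intro ftvn}) to take the maximum gives $\ip{\lc}{\ly} \leq \alpha < \ip{c}{x}$, contradicting the hypothesis.

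For the forward direction of $(b)$, I would use the generalized sublinearity inequality (\ref{general sublinearity}) together with positive homogeneity of $\lambda$ from Theorem \ref{basic theorem}$(a)$: with $x = \sum_i t_i y_i$ and $y_i \in [y]$ as above,
\[ \ip{\lc}{\lx} = \Big\langle \lc,\, \lambda\big(\textstyle\sum_i t_i y_i\big) \Big\rangle \leq \sum_i \ip{\lc}{\lambda(t_i y_i)} = \sum_i t_i \ip{\lc}{\lambda(y_i)} = \ip{\lc}{\ly}. \]
For the converse in the finite dimensional case, the hypothesis of $(b)$ together with $(A2)$ gives $\ip{c}{x} \leq \ip{\lc}{\lx} \leq \ip{\lc}{\ly}$, and then the converse in $(a)$ yields $x \prec y$.

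For $(c)$, the ``if'' direction is immediate: $[x] = [y]$ gives $x \in [y] \subseteq \conv\,[y]$, so $x\prec y$, and symmetrically $y \prec x$. For ``only if'', write $x = \sum_{i=1}^N t_i y_i$ with $y_i\in [y]$. Norm preservation forces $\norm{y_i} = \norm{\ly} = \norm{y}$ for all $i$, so convexity of the norm yields $\norm{x} \leq \norm{y}$; applying the same to $y\prec x$ gives $\norm{y}\leq \norm{x}$, whence $\norm{x} = \norm{y}$. Now expand
\[ \norm{x}^2 = \Big\langle x,\, \textstyle\sum_i t_i y_i \Big\rangle = \sum_i t_i \ip{x}{y_i} \leq \sum_i t_i \norm{x}\,\norm{y_i} = \norm{x}^2, \]
so equality holds throughout. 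If $x = 0$ then $\norm{y}=0$ and $y = 0 = x$; otherwise $\ip{x}{y_i} = \norm{x}\norm{y_i}$ with $\norm{y_i} = \norm{x} \neq 0$ forces $y_i = x$ for every $i$ with $t_i > 0$ by the equality case of Cauchy--Schwarz. In either case $\ly = \lambda(y_i) = \lx$, i.e., $[x] = [y]$.

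The main obstacle lies in the reverse directions of $(a)$ and $(b)$: the separation argument requires $\conv\,[y]$ to be closed, which is why finite dimensionality (via the Minkowski compactness theorem) is essential. Part $(c)$ requires careful handling of the Cauchy--Schwarz equality case and the degenerate subcase $x = 0$, but once norm equality is established, the forced equality in the Cauchy--Schwarz bound collapses all the $y_i$ with positive weight to $x$ itself.
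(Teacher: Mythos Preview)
Your proof is correct and follows essentially the same approach as the paper. The only minor deviation is in the forward direction of $(b)$: you apply the sublinearity inequality (\ref{general sublinearity}) directly to the convex combination, whereas the paper instead fixes $c$, applies the already-proved part $(a)$ with an arbitrary $z\in[c]$ in place of $c$, and then maximizes over $z\in[c]$ via (\ref{intro ftvn}); both routes are equally valid and short.
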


\begin{proof}
	$(a)$ Suppose $x\prec y$ so that $x=\sum_{i=1}^{k}t_iy_i$, where $t_i$s are nonnegative numbers summing to one and $y_i\in [y]$ for all $i$. Then, for any $c\in \V$,
	\[ \ip{c}{x} = \sum_{i=1}^{k}t_i \ip{c}{y_i} \leq \sum_{i=1}^{k}t_i \ip{\lc}{\lambda(y_i)} = \sum_{i=1}^{k}t_i \ip{\lc}{\ly} = \ip{\lc}{\ly}, \]
	where we have used condition $(A2)$ in Definition \ref{ftvn} and the  assumption that $y_i\in [y]$.
	Thus we have the implication in $(a)$.
	
	Now suppose that $\V$ is finite dimensional and the inequality $\ip{c}{x} \leq \ip{\lc}{\ly}$ holds for all $c \in \V$. Suppose, if possible, $x\not \in \conv\,[y]$. As $\V$ is finite dimensional, $[y]$ is compact; hence  $\conv\,[y]$ is compact and convex. By the strong separation theorem, there exist $c\in \V$ and an $\alpha\in \R$ such that
	\[ \ip{c}{x} > \alpha \geq \ip{c}{u} \,\, \text{for all} \,\, u \in [y]. \]
	Then, by  (\ref{intro ftvn}), 
	\[ \ip{c}{x} > \alpha \geq \max_{u\in [y]} \ip{c}{u} = \ip{\lc}{\ly}. \] 
	As this contradicts our assumption, we have $x\in \conv\, [y]$, justifying  the reverse implication.
	
	\smallgap
	
	$(b)$ Suppose $x\prec y$.
	We fix $c$ and let $z\in [c]$. Then, by $(a)$, $\ip{z}{x} \leq \ip{\lambda(z)}{\ly} = \ip{\lc}{\ly}$. Maximizing over $z$ in $[c]$ and applying (\ref{intro ftvn}), we see that $\ip{\lc}{\lx} \leq \ip{\lc}{\ly}$. To see the reverse implication, assume that for all $c \in \V$, $\ip{\lc}{\lx} \leq \ip{\lc}{\ly}$. Then, $\ip{c}{x} \leq \ip{\lc}{\ly}$. Since $\V$ is assumed to be finite dimensional, we can apply $(a)$ to get $x\prec y$. 
	
	\smallgap
	
	$(c)$ Let $x \prec y$ and $y\prec x$ so that we have convex combinations $x = \sum_{i=1}^{k} t_iy_i$ and $y = \sum_{j=1}^{l} s_jx_j$, where $t_i$ and $s_j$ are positive scalars, $\sum_{i=1}^{k} t_i = 1$, and $\sum_{j=1}^{l} s_j = 1$, $y_i \in [y]$ and $x_j \in [x]$ for all $i,j$.
	Since $y_i\in [y] \Rightarrow \lambda(y_i)=\ly \Rightarrow \norm{y_i} = \norm{y}$, we see that $\norm{x} \leq \sum_{i=1}^{k} t_i \norm{y_i} = \norm{y}$; similarly, $\norm{y} \leq \norm{x}$. Thus, 
	$x$ is a convex combination of $y_i$s, where $\norm{y_i} = \norm{x}$ for all $i$. By strict convexity of the (inner product) norm, we see that $x=y_i$ for all $i$, showing that $x\in [y]$. Likewise $y\in [x]$. Thus, $[x]=[y]$.
	This proves the `only if' part. The `if' part is obvious.
\end{proof}

%%%%%%%%%%%%%%%%%%%%%%%%%%%%%%%%%%%%%%%%
\section{Doubly stochastic transformations}
In this section, we formulate the definition of a doubly stochastic transformation on a FTvN system. 

\gap

First, we recall some standard definitions. A {\it doubly stochastic matrix} is a real $n\times n$ nonnegative matrix, each of whose rows and columns sums to one. A linear transformation $D$ on $\Hn$ is {\it doubly stochastic} \cite{ando} if it positive, unital, and  trace-preserving, that is, it keeps the semidefinite cone invariant, $D(I)=I$ and $D^*(I)=I$, where $I$ is the identity matrix and $D^*$ denotes the adjoint of $D$. More generally, a linear transformation $D$ on a  Euclidean Jordan algebra $\V$ is {\it doubly stochastic} \cite{gowda-doubly-stochastic} if it keeps the corresponding symmetric cone invariant, $D(e)=e$ and $D^*(e)=e$, where
the symmetric cone $\V_+$ consists of all elements of $\V$ with nonnegative eigenvalues, $e$ is the  unit element, and $D^*$ denotes the adjoint. 

\gap

Now, deviating from the use of specific cones and/or unit elements, we introduce  the definition of a doubly stochastic transformation in the setting of a FTvN system and show that this definition is equivalent to the standard ones in familiar settings.

\begin{definition}
	Consider a FTvN system $(\V,\W,\lambda)$. A linear transformation $D:\V \to \V$ is said to be doubly stochastic if $Dx \prec x$ for all $x\in \V$, that is,
	$Dx\in \conv [x]$ for all $x\in \V$. 
\end{definition}

In the proof of Item $(c)$ of Proposition \ref{basic majorization prop}, we observed that $x \prec y \Rightarrow \norm{x} \leq \norm{y}$. This shows that every doubly stochastic transformation is continuous. 

We recall the definition of adjoint $D^*$ of $D$: For all $x,y\in \V$, $\ip{D^*x}{y} = \ip{x}{Dy}$; note that $D^*$ exists, for example, when $\V$ is a Hilbert space. 

\begin{proposition}
	Suppose $D$ is a doubly stochastic transformation on 
	$(\V,\W,\lambda)$ with adjoint $D^*$. Then, the following statements hold:
	\begin{itemize}
		\item [$(a)$] For every convex spectral set $K$, $D(K)\subseteq K$. 
		\item [$(b)$] For any $u$ in (the center) $C$, $Du=u$ and $D^*u=u$. In particular, if $e$ is a unit element of $(\V, \W, \lambda)$, then $De=e$ and $D^*e=e$.
		\item [$(c)$] If $\V$ is finite dimensional, then $D^*x\prec x$ for all $x\in \V$.
	\end{itemize}
\end{proposition}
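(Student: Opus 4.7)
The three parts all follow by unwinding the definition $Dx \in \conv[x]$ and combining it with the basic results on the center, on commutativity, and on the characterization of majorization via inner products.

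For part $(a)$, the plan is to observe that if $K$ is spectral and $x \in K$, then $[x] \subseteq K$; since $K$ is additionally convex, $\conv[x] \subseteq K$, and so $Dx \in \conv[x] \subseteq K$. Thus $D(K) \subseteq K$.

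For part $(b)$, the idea is to use the characterization $[u]=\{u\}$ for $u\in C$ established in Proposition \ref{e-orbit prop}. Since $Du \in \conv[u] = \{u\}$, we immediately get $Du=u$. For $D^*u=u$, the plan is to compute $\ip{D^*u}{x} = \ip{u}{Dx}$ for arbitrary $x\in \V$, write $Dx = \sum_{i=1}^k t_i x_i$ with $t_i \geq 0$, $\sum t_i = 1$ and $x_i \in [x]$, and use that $u\in C$ commutes with each $x_i$:
\[
\ip{u}{x_i} = \ip{\lambda(u)}{\lambda(x_i)} = \ip{\lambda(u)}{\lambda(x)} = \ip{u}{x}.
\]
Summing with weights $t_i$ gives $\ip{u}{Dx} = \ip{u}{x}$, hence $\ip{D^*u - u}{x}=0$ for every $x$, and so $D^*u=u$. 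The unit-element claim is the special case $u=e$.

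For part $(c)$, the key is to invoke the finite-dimensional criterion in Proposition \ref{basic majorization prop}$(a)$: in finite dimension, $z \prec x$ is equivalent to the family of scalar inequalities $\ip{c}{z} \leq \ip{\lambda(c)}{\lambda(x)}$ for every $c\in \V$. The plan is to take an arbitrary $c\in \V$, write $\ip{c}{D^*x} = \ip{Dc}{x}$, expand $Dc = \sum_i t_i c_i$ with $c_i \in [c]$, and estimate
\[
\ip{Dc}{x} = \sum_{i} t_i \ip{c_i}{x} \leq \sum_{i} t_i \ip{\lambda(c_i)}{\lambda(x)} = \sum_{i} t_i \ip{\lambda(c)}{\lambda(x)} = \ip{\lambda(c)}{\lambda(x)},
\]
using $(A2)$ and the fact that $\lambda(c_i)=\lambda(c)$. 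Since this holds for every $c$, the finite-dimensional converse in Proposition \ref{basic majorization prop}$(a)$ yields $D^*x \prec x$.

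The only point that requires more than direct unwinding is part $(c)$, and there the mild obstacle is the use of finite-dimensionality: without it, one cannot invert the inequality $\ip{c}{D^*x} \leq \ip{\lambda(c)}{\lambda(x)}$ to produce membership in $\conv[x]$, since the strong separation argument in Proposition \ref{basic majorization prop}$(a)$ relies on compactness of $[x]$ and hence of $\conv[x]$. Parts $(a)$ and $(b)$ need no such hypothesis.
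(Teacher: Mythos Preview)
Your proof is correct and, in parts $(a)$ and $(c)$, essentially identical to the paper's: the paper proves $(c)$ by contradiction (separate $D^*c$ from $\conv[c]$ and derive $\ip{Dd}{c} > \ip{\lambda(d)}{\lambda(c)}$, which clashes with $Dd\prec d$ via Proposition~\ref{basic majorization prop}$(a)$), but this is exactly the contrapositive of your direct inequality $\ip{c}{D^*x}=\ip{Dc}{x}\le \ip{\lambda(c)}{\lambda(x)}$ followed by the finite-dimensional converse in Proposition~\ref{basic majorization prop}$(a)$.

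For $(b)$, your argument for $D^*u=u$ is actually cleaner than the paper's. The paper argues by contradiction: if $D^*u\neq u$ it finds $d$ with $\ip{d}{D^*u}>\ip{d}{u}$, rewrites the left side as $\ip{Dd}{u}=\ip{\lambda(Dd)}{\lambda(u)}$, and contradicts $Dd\prec d$ via Proposition~\ref{basic majorization prop}$(b)$. Your direct computation --- using only that $u\in C$ forces $\ip{u}{x_i}=\ip{\lambda(u)}{\lambda(x_i)}=\ip{\lambda(u)}{\lambda(x)}=\ip{u}{x}$ for every $x_i\in[x]$ --- avoids both the contradiction detour and the appeal to Proposition~\ref{basic majorization prop}, and makes transparent that the linear functional $\ip{u}{\cdot}$ is constant on each $\lambda$-orbit whenever $u$ lies in the center.
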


\begin{proof}
	$(a)$ Suppose $K$ is a  spectral set that is also convex. For any $x\in K$, we have $[x]\subseteq K$ and hence, $\conv\,[x]\subseteq K$. Then, $Dx\in \conv\,[x]$ implies that $Dx\in K$.
	
	\smallgap
	
	$(b)$ We have observed previously (see Proposition \ref{e-orbit prop}) that $[u]=\{u\}$ for all $u \in C$. So, when $D$ is doubly stochastic, we have
	$Du\prec u$, that is, $Du\in \conv\,[u]=\{u\}$. Thus, $Du=u$. Suppose, if possible, $D^*u\neq u$. Then (working in a finite dimensional subspace), we can find a $d\in \V$ such that $\ip{d}{D^*u} > \ip{d}{u}$. Since $u$ commutes with all elements of $\V$, this leads to $\ip{\lambda(Dd)}{\lu} = \ip{Dd}{u} = \ip{d}{D^*u} > \ip{d}{u}$. However, from Proposition \ref{basic majorization prop}$(b)$, 
	\[ Dd \prec d \Rightarrow \ip{\lambda(Dd)}{\lu} \leq \ip{\lambda(d)}{\lu}=\ip{d}{u}. \] 
	We reach a contradiction. Hence, $D^*u=u$.
	The additional statement follows as $e\in C$.
	
	\smallgap
	
	$(c)$ Let $\V$ be finite dimensional. Suppose, if possible, there exists $c\in \V$ with $D^*c\not\prec c$, that is, $D^*c\not\in \conv [c]$. 
	Since $\V$ is finite dimensional, $\conv\,[c]$ is compact and convex; so, by the strong separation theorem, we can find  $d\in \V$ and $\alpha\in \R$ such that 
	\[ \ip{d}{D^*c} > \alpha \geq \ip{d}{u}, \]
	for all $u\in [c]$. Taking the maximum over $u$ in $[c]$ and using  (\ref{intro ftvn}), we see that 
	\[ \ip{Dd}{c} > \alpha \geq \ip{\lambda(d)}{\lc}. \]
	However, $Dd \prec d$ implies, via Proposition \ref{basic majorization prop}$(a)$ that $\ip{Dd}{c} \leq \ip{\lambda(d)}{\lc}$. We reach a contradiction. Hence, $D^*x \prec x$ for all $x\in \V$.
\end{proof}

\noindent{\bf Remark.} We note that when $\V$ is finite dimensional, Item $(a)$  in the above proposition is 
equivalent to $D$ being doubly stochastic. 
This is because, when $\V$ is finite dimensional, for any $x\in \V$, $K:=\conv\,[x]$ is a convex spectral set (see Proposition \ref{spectral invariance prop}(c)); so $D(K)\subseteq K$ implies that $Dx\in \conv\,[x]$, or equivalently, $Dx\prec x$.

\gap

The following result shows that in the setting of a Euclidean Jordan algebras (hence in $\Hn$ and $\Rn$), our  definition coincides with the standard ones.

\begin{corollary}
	Consider the FTvN system $(\V,\Rn,\lambda)$ corresponding to the Euclidean Jordan algebra $\V$, see Example \ref{eja}. Let $e$ be the unit element of $\V$ and $D:\V \to \V$ be linear. Then the following statements are equivalent:
	\begin{itemize}
		\item [$(i)$] $D$ is doubly stochastic on the FTvN system $(\V,\Rn,\lambda)$, i.e., $Dx\in \conv\, [x]$ for all $x\in \V$. 
		\item [$(ii)$]  $D$ is doubly stochastic on the Euclidean Jordan algebra $\V$, i.e., $D(\V_+)\subseteq \V_+$, $De=e$ and $D^*e=e$.
		\item [$(iii)$] $\lambda(Dx)\prec \lx$ in $\Rn$ for all $x\in \V$.
	\end{itemize}
\end{corollary}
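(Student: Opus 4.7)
The plan is to establish the cycle $(i) \Rightarrow (ii) \Rightarrow (iii) \Rightarrow (i)$.

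For $(i) \Rightarrow (ii)$, I would observe that the symmetric cone $\V_+$ is convex (it is a convex cone), and it is spectral because membership in $\V_+$ is determined entirely by the sign of the eigenvalues, i.e., by $\lambda(x)$. Hence Item $(a)$ of the preceding proposition yields $D(\V_+) \subseteq \V_+$. Since the continuation of Example \ref{eja} shows that the center $C$ of the FTvN system $(\V, \Rn, \lambda)$ equals $\R\,e$, Item $(b)$ of the preceding proposition immediately gives $De = e$ and $D^*e = e$.

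For $(ii) \Rightarrow (iii)$, I would use the Ky Fan style variational representation: for every $y \in \V$ and $1 \leq k \leq n$,
\[ \sum_{i=1}^{k} \lambda_i(y) \;=\; \max \big\{ \ip{y}{c} : 0 \leq c \leq e,\ \tr(c) = k \big\}, \]
where the maximum is in fact attained at a partial sum $c = f_1 + \cdots + f_k$ of a Jordan frame diagonalizing $y$ (one direction is the FTvN inequality applied after noting $\lambda(c)$ is a convex combination of $0/1$ vectors with $k$ ones; the other direction is explicit). Given any admissible $c$, the conditions in $(ii)$ force $0 \leq D^*c \leq D^*e = e$ and $\tr(D^*c) = \ip{D^*c}{e} = \ip{c}{De} = \ip{c}{e} = k$, so $D^*c$ is also admissible. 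Consequently,
\[ \sum_{i=1}^{k} \lambda_i(Dx) \;=\; \max_{c} \ip{Dx}{c} \;=\; \max_{c} \ip{x}{D^*c} \;\leq\; \sum_{i=1}^{k} \lambda_i(x). \]
Equality at $k = n$ follows from $\sum_i \lambda_i(Dx) = \tr(Dx) = \ip{Dx}{e} = \ip{x}{D^*e} = \ip{x}{e} = \tr(x)$. (This is essentially the main result of \cite{gowda-doubly-stochastic}, which can be cited directly.)

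For $(iii) \Rightarrow (i)$, I would invoke the classical theorem of Hardy-Littlewood-P\'olya together with Birkhoff's theorem in $\Rn$: since $\lambda(Dx) \prec \lambda(x)$, we may write $\lambda(Dx) = \sum_{i} t_i\,P_i\,\lambda(x)$ for nonnegative scalars $t_i$ summing to $1$ and permutation matrices $P_i$. Choose a Jordan frame $\{f_1, \ldots, f_n\}$ realizing the spectral decomposition $Dx = \sum_{j} \lambda_j(Dx) f_j$, and define
\[ x_i := \sum_{j=1}^{n} \big(P_i \lambda(x)\big)_j \, f_j. \]
Each $x_i$ is an element written as a combination of a Jordan frame whose coefficients form a permutation of $\lambda(x)$, so its eigenvalue multiset coincides with that of $x$, giving $\lambda(x_i) = \lambda(x)$ and therefore $x_i \in [x]$. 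Linearity then gives $\sum_i t_i x_i = \sum_j \big(\sum_i t_i (P_i \lambda(x))_j\big) f_j = \sum_j \lambda_j(Dx) f_j = Dx$, so $Dx \in \conv\,[x]$.

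The main obstacle is Step $(ii) \Rightarrow (iii)$: verifying the Ky Fan-style variational formula in the Jordan-algebraic setting requires the spectral theorem plus a Schur-Horn-type observation that the extreme points of $\{c : 0 \leq c \leq e,\ \tr(c) = k\}$ are exactly the partial frame sums $f_1 + \cdots + f_k$. In contrast, $(i) \Rightarrow (ii)$ is routine once the preceding proposition is in hand, and $(iii) \Rightarrow (i)$ only requires an elementary bookkeeping lift from $\Rn$ back to $\V$ via the spectral decomposition of $Dx$.
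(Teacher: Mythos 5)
Your proposal is correct, and the implication $(i)\Rightarrow(ii)$ is exactly the paper's argument ($\V_+$ is a convex spectral cone, $e$ spans the center, quote Items $(a)$ and $(b)$ of the preceding proposition). The other two implications, however, follow genuinely different routes. For $(ii)\Rightarrow(iii)$ the paper takes spectral decompositions $x=\sum_j\lambda_j(x)e_j$ and $Dx=\sum_i\lambda_i(Dx)f_i$, observes that the matrix $M=[\ip{De_j}{f_i}]$ is doubly stochastic (nonnegativity from $D(\V_+)\subseteq\V_+$ and self-duality, row/column sums from $De=e$ and $D^*e=e$), and reads off $\lambda(Dx)=M\lambda(x)\prec\lambda(x)$ from Hardy--Littlewood--P\'olya; you instead run the dual Ky Fan argument, showing the admissible set $\{c: 0\leq c\leq e,\ \tr(c)=k\}$ is preserved by $D^*$. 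Both are sound (your verification that $0\leq D^*c\leq e$ and $\tr(D^*c)=k$ is correct via self-duality of $\V_+$), and both lean on essentially the same input --- the paper's needs the identity $\lambda_i(Dx)=\ip{Dx}{f_i}$, yours needs the Ky Fan variational formula, which in turn only requires the polytope fact in $\Rn$ (you do not actually need the extreme points of the set in $\V$ to be partial frame sums, only that $\lambda(c)$ lies in the convex hull of the $0/1$ vectors with $k$ ones, so your ``main obstacle'' is milder than you suggest). For $(iii)\Rightarrow(i)$ the difference is more pronounced: the paper argues abstractly through the reduced system $(\Rn,\Rn,\mu)$ with $\mu(p)=p^\downarrow$, applying Proposition \ref{basic majorization prop}$(b)$ twice (once in $\Rn$, once in $\V$, the latter resting on a separation argument in finite dimensions), whereas you give a constructive lift: Birkhoff's theorem writes $\lambda(Dx)=\sum_i t_iP_i\lambda(x)$, and pushing each $P_i\lambda(x)$ through the Jordan frame of $Dx$ produces explicit elements $x_i\in[x]$ with $\sum_i t_ix_i=Dx$. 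Your version exhibits the convex combination explicitly and is more elementary, but it is tied to the Jordan-algebraic spectral decomposition; the paper's version is the one that survives in the general FTvN/reduced-system framework of Section 9.
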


\begin{proof}
	$(i) \Rightarrow (ii)$: This follows immediately from Items $(a)$ and $(b)$ of the previous result, as  the symmetric cone $\V_+$ is a convex spectral cone, the unit element $e$ in the algebra is a  unit element in the corresponding FTvN system.
	
	\smallgap
	
	$(ii) \Rightarrow (iii)$: This has been proved in Theorems 5 and 6 in \cite{gowda-doubly-stochastic}. Here, for completeness, we repeat the arguments. Fix $x\in \V$ and let $y:=Dx$. We write the spectral decompositions of $x$ and $y$: $x = \lambda_1(x)e_1 + \lambda_2(x)e_2 + \cdots + \lambda_n(x)e_n$ and $y = \lambda_1(y)f_1 + \lambda_2(y)f_2 + \cdots + \lambda_n(y)f_n$, where $\{e_1, e_2, \ldots, e_n\}$ and $\{f_1, f_2, \ldots, f_n\}$ are Jordan frames in $\V$. Then for all $i$,
	\[ \ly_i = \ip{y}{f_i} =\ip{Dx}{f_i} = \sum_{j=1}^{n} \ip{De_j}{f_i} \lambda_j(x). \]
	From the properties imposed on $D$, the matrix $M = [m_{ij}]$, where $m_{ij} = \ip{De_j}{f_i}$, is doubly stochastic. Since $\ly = M\lx$, we see that $\ly \prec \lx$ in $\Rn$. This gives Item $(iii)$.
	
	\smallgap
	
	$(iii) \Rightarrow (i)$: Assume $(iii)$. We consider the FTvN system $(\Rn, \Rn, \mu)$, where $\mu(p) = p^\downarrow$. Now, for any $x \in \V$,  $\lambda(Dx) \prec \lx$ in $\Rn$. From Proposition \ref{basic majorization prop}$(b)$ applied to the FTvN system $(\Rn, \Rn, \mu)$, we get, for any $c\in \V$,
	\[ \ip{\mu(\lc)}{\mu(\lambda(Dx))} \leq \ip{\mu(\lc)}{\mu(\lx)}. \]
	However, we have $\mu \circ \lambda = \lambda$. Hence, 
	\[ \ip{\lc}{\lambda(Dx)} \leq \ip{\lc}{\lx}. \]
	As $\V$ is finite dimensional, from Proposition \ref{basic majorization prop}$(b)$ applied to the FTvN system $(\V, \Rn, \lambda)$, we see that $Dx \prec x$ in $(\V, \Rn, \lambda)$. Thus we have $(i)$.
\end{proof}

We make an important observation:  In the setting of the FTvN system coming from a  Euclidean Jordan algebra, apart from $De=e$ and $D^*(e)=e$, we require $D$ to keep only one  convex
spectral set, namely  $\V_+$, invariant. Do we have such a result for the system coming from a hyperbolic polynomial? 

\gap

\noindent{\bf Problem.} Consider the FTvN system of Example \ref{hyperbolic}. Then, the {\it hyperbolicity cone} is, by definition, 
\[ \V_+:=  \{ x \in \V : \lx \geq 0 \}. \]
Suppose $D:\V \to \V$ is linear with $D(\V_+)\subseteq \V_+$ and $D(e)=e=D^*(e)$. Does it follow that $D$ is doubly stochastic, that is, $Dx\in \conv\,[x]$ for all $x\in \V$?

\begin{proposition} In a FTvN system $(\V,\W,\lambda)$, the following hold:
	\begin{itemize}
		\item [$(a)$] Let $A:\V \to \V$ be linear. Then, $A \in \Aut(\V,\W,\lambda)$ if and only if $A$ is invertible with $A$ and $A^{-1}$ both doubly stochastic.
		\item [$(b)$] If $\V$ is finite dimensional, every $D$ in the convex hull of $\Aut(\V, \W, \lambda)$ is doubly stochastic.
	\end{itemize}
\end{proposition}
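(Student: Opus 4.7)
For the forward direction, suppose $A \in \Aut(\V,\W,\lambda)$. Then $\lambda(Ax) = \lambda(x)$ for every $x \in \V$, so $Ax \in [x] \subseteq \conv\,[x]$, which is exactly $Ax \prec x$; hence $A$ is doubly stochastic. Since $\Aut(\V,\W,\lambda)$ is a group (it is a subgroup of $\mathcal{O}(\V)$), $A^{-1}$ also belongs to $\Aut(\V,\W,\lambda)$ and is doubly stochastic by the same argument.

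For the reverse direction, suppose $A$ is invertible with both $A$ and $A^{-1}$ doubly stochastic. Fix $x \in \V$. From $Ax \prec x$ we get one majorization, and from the doubly stochastic property of $A^{-1}$ applied to $Ax$ we get $x = A^{-1}(Ax) \prec Ax$. Now I invoke Proposition \ref{basic majorization prop}(c): the two-sided majorization $Ax \prec x$ and $x \prec Ax$ forces $[Ax] = [x]$, i.e., $\lambda(Ax) = \lambda(x)$. As $x$ was arbitrary, $A \in \Aut(\V,\W,\lambda)$.

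\textbf{Plan for part (b).} Write $D = \sum_{i=1}^{k} t_i A_i$ with $t_i \geq 0$, $\sum_{i=1}^{k} t_i = 1$, and $A_i \in \Aut(\V,\W,\lambda)$. For any $x \in \V$, each $A_i x \in [x]$ since $\lambda(A_i x) = \lambda(x)$; hence
\[ Dx = \sum_{i=1}^{k} t_i A_i x \in \conv\,[x], \]
which is precisely $Dx \prec x$. Thus $D$ is doubly stochastic.

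\textbf{Obstacle.} There is essentially no technical obstacle: part (a) reduces to invoking the antisymmetry of majorization from Proposition \ref{basic majorization prop}(c), and part (b) is immediate from the definitions. The only subtle point worth noting is that the finite-dimensionality assumption in (b) is not actually used by the argument above (the convex hull consists of finite convex combinations); it is presumably stated for consistency with the ambient setting in which doubly stochastic transformations are most naturally studied, or to permit a later strengthening to the closed convex hull via a compactness argument.
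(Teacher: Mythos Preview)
Your proof of part (a) is correct and essentially identical to the paper's: both directions invoke Proposition~\ref{basic majorization prop}(c) after observing the two-sided majorization $Ax \prec x$ and $x \prec Ax$.

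For part (b), your argument is correct but genuinely different from the paper's, and in fact simpler. The paper proceeds indirectly: it fixes $c \in \V$ and uses the sublinearity inequality from Theorem~\ref{basic theorem}(c) together with $\lambda(A_i x) = \lambda(x)$ to obtain
\[
\big\langle \lambda(c),\, \lambda(Dx) \big\rangle \leq \big\langle \lambda(c),\, \lambda(x) \big\rangle
\]
for all $c$, and then invokes the finite-dimensional converse in Proposition~\ref{basic majorization prop}(b) to conclude $Dx \prec x$. That route genuinely requires the finite-dimensionality of $\V$. Your argument, by contrast, simply observes that each $A_i x$ already lies in $[x]$, so $Dx$ is by definition a finite convex combination of elements of $[x]$ and hence lies in $\conv\,[x]$. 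As you correctly note in your obstacle remark, this direct argument never uses finite dimensionality, so your proof actually yields a slightly stronger statement than the one in the paper.
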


\begin{proof}
	$(a)$ Suppose $A \in \Aut(\V,\W,\lambda)$. Then for all $x\in \V$, we have $\lambda(Ax) = \lx = \lambda(A^{-1}x)$; hence $Ax, A^{-1}x \in [x]$. So, $Ax \prec x$ and $A^{-1}x \prec x$ for all $x \in \V$. Thus, $A$ and $A^{-1}$ are doubly stochastic. Conversely, suppose $A$ is invertible with $Ax \prec x$ and $A^{-1}x \prec x$ for all $x \in \V$. As $A^{-1}x \prec x$ for all $x \in \V$ is equivalent to $y \prec Ay$ for all $y \in \V$, by Proposition \ref{basic majorization prop}$(c)$, we see that $[Ay]=[y]$ for all $y \in \V$. Thus, we have $A \in \Aut(\V,\W,\lambda)$.
	
	\smallgap
	
	$(b)$ Now suppose $\V$ is finite dimensional and $D=\sum_{i=1}^{k} t_i A_i$ is a convex combination of $A_1,A_2,\ldots, A_k$ with $A_i \in \Aut(\V,\W,\lambda)$ for all $i$. To show that $Dx\prec x$ for all $x$, it is enough to show that $\ip{\lc}{\lambda(Dx)} \leq \ip{\lc}{\lx}$ for all $c\in \V$ and quote Proposition \ref{basic majorization prop}$(b)$. To this end, we fix $c$ and observe, due to sublinearity, 
	\[ \Big\langle \lc, \lambda \Big( \sum_{i=1}^{k} A_ix \Big) \Big\rangle \leq \Big\langle \lc, \sum_{i=1}^{k} t_i \lambda(A_ix) \Big\rangle = \ip{\lc}{\lx}, \]
	where we have used the fact that $\lambda(A_ix) = \lx$ for all $i$. Thus, $Dx\prec x$ for all $x$, so $D$ is doubly stochastic.
\end{proof}

\begin{corollary}
	Suppose $\V$ is finite dimensional and $\Aut(\V,\W,\lambda)$ is orbit-transitive. If $x\prec y$, then there exists a doubly stochastic transformation $D$ (which is a convex combination of automorphisms on $(\V,\W,\lambda)$) such that $x=Dy$. 
\end{corollary}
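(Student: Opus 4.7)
The proof will be essentially a direct unpacking of the hypotheses combined with the preceding proposition. The plan is to take the definition $x \prec y$, which says $x \in \conv[y]$, expand it as a finite convex combination of elements of $[y]$, then use orbit-transitivity to rewrite each such element as $A_i y$ for some $A_i \in \Aut(\V,\W,\lambda)$, and finally invoke Item $(b)$ of the previous proposition to conclude that the resulting convex combination of automorphisms is doubly stochastic.

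In more detail: since $x \prec y$ means $x \in \conv[y]$, I can write $x = \sum_{i=1}^{k} t_i y_i$ where $t_i \geq 0$, $\sum_{i=1}^{k} t_i = 1$, and each $y_i \in [y]$, i.e., $\lambda(y_i) = \ly$. By the orbit-transitivity hypothesis applied to each pair $(y, y_i)$, there exist $A_i \in \Aut(\V,\W,\lambda)$ such that $y_i = A_i y$ for $i = 1, 2, \ldots, k$. Setting $D := \sum_{i=1}^{k} t_i A_i$, which is a convex combination of automorphisms, gives $Dy = \sum_{i=1}^{k} t_i A_i y = \sum_{i=1}^{k} t_i y_i = x$.

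It remains to note that $D$ is doubly stochastic. This is exactly the content of the previous proposition, Item $(b)$, which states that in a finite dimensional FTvN system, every convex combination of elements of $\Aut(\V,\W,\lambda)$ is doubly stochastic. Since $\V$ is assumed finite dimensional here, this applies directly and completes the proof.

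There is no real obstacle: the key work has already been done in establishing that convex combinations of automorphisms are doubly stochastic (which used sublinearity of $\lambda$ under inner product with $\lc$ and Proposition \ref{basic majorization prop}$(b)$), and orbit-transitivity is precisely the hypothesis that turns the abstract convex-hull description of majorization into a concrete linear action on $y$. The only thing to be mildly careful about is choosing the representatives $A_i$ honestly from the definition of orbit-transitivity and not conflating the $y_i$s (which are in $[y]$) with the way $D$ acts on an arbitrary vector — but $D$ is defined independently of $x$ once the $t_i$s and $A_i$s are fixed, so this is automatic.
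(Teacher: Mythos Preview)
Your proof is correct and follows essentially the same approach as the paper's own proof: write $x$ as a convex combination of $y_i \in [y]$, use orbit-transitivity to get $y_i = A_i y$ with $A_i \in \Aut(\V,\W,\lambda)$, set $D := \sum_i t_i A_i$, and invoke Item~$(b)$ of the preceding proposition (using finite-dimensionality) to conclude $D$ is doubly stochastic.
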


\begin{proof}
	If $x\prec y$, then $x\in \conv\,[y]$ and so $x$ is a convex combination of some $y_i$ in $[y]$, $i=1,2,\ldots, k$. As $(\V,\W,\lambda)$ is orbit-transitive, there exists $A_i\in \Aut(\V,\W,\lambda)$ such that $y_i = A_i y$ for all $i$. Thus, $x = \big( \sum_{i=1}^{k} t_i A_i \big) y$, where $t_i$s are nonnegative, adding up to one. Since $\V$ is finite dimensional, we can apply the previous result to conclude that $D: = \sum_{i=1}^{k} t_i A_i$ is doubly stochastic.
\end{proof}

\gap

\noindent{\bf Remark.} Suppose $(\V,\G,\gamma)$ is a finite dimensional NDS. If $x\prec y$ in this system, we can find $D$ which is a convex combination of elements $A_1,A_2,\ldots, A_k$ in $\G$ such that $x=Dy$. 
(This is because, in the above proof, $y_i=A_iy$ for some $A_i\in \G$.)

%%%%%%%%%%%%%%%%%%%%%%%%%%%%%%%%%%%%%%%%%%%%%%%%%%%%%%%%%%%%%%%%
\section{Reduced systems}

In certain settings, the properties of a FTvN system $(\V,\W,\lambda)$ are closely related to those of a companion system $(\W,\W,\mu)$. The goal of this section is to understand the interplay between the two systems, specifically addressing the implication $x\prec y \Rightarrow \lx\prec \ly$ and Lidskii type inequality $\lambda(x+y)\prec \lx+\ly$. To motivate, we consider the following  examples.

\begin{itemize}
	\item Suppose $\V$ is a Euclidean Jordan algebra of rank $n$ carrying the trace inner product with $\lambda:\V \to \Rn$ denoting the eigenvalue map. Then, as in Example \ref{eja}, $(\V,\Rn,\lambda)$ is a FTvN system. In addition, $(\Rn,\Rn,\mu)$ is a FTvN system, where $\mu(q)=q^\downarrow$. In this setting, $\mu\circ \lambda=\lambda$ and $\mathrm{ran}\,\mu = \mathrm{ran}\,\lambda$. 
	
	\item We consider a FTvN system $(\V,\Rn,\lambda)$ induced by a complete isometric hyperbolic polynomial, see Example \ref{hyperbolic}. With $\mu$ as in the previous example, $(\Rn,\Rn,\mu)$ is a FTvN system, and moreover $\mu\circ \lambda=\lambda$. While the condition $\mathrm{ran}\,\mu = \mathrm{ran}\,\lambda$ may not always hold, such a condition was imposed in getting an important subgradient formula (\cite{bauschke et al}, Theorem 5.5). 
	
	\item Consider a finite dimensional NDS (equivalently, an Eaton triple) $(\V,\G,\gamma)$. As noted in Example \ref{nds}, $(\V,\W,\lambda)$ is a FTvN system, where $\W = \mathrm{span}(\gamma(\V))$ and $\lambda = \gamma$. Now, let  $\h:=\{A\in \G: A(\W)=\W\}$ and $\mu=\gamma|_{\W}$. Then,   under appropriate conditions (see \cite{niezgoda-group majorization}, Theorem 3.2) $(\W,\h,\mu)$ becomes an Eaton triple, or equivalently, $(\W,\W,\mu)$ becomes a NDS. In this setting, we see that $\mu\circ \lambda=\lambda$ and $\mathrm{ran}\,\mu\subseteq \mathrm{ran}\,\lambda$. 
\end{itemize}

Motivated by these examples, we introduce the following.

\begin{definition} (Reduced system)
	Let $(\V,\W,\lambda)$ be a FTvN system. Suppose $(\W,\W,\mu)$ is a FTvN system such that 
	\begin{itemize}
		\item [$(C1)$] $\mu\circ \lambda=\lambda$, and
		\item [$(C2)$] $\mathrm{ran}\,\mu \subseteq \mathrm{ran}\,\lambda$.
	\end{itemize}
	Then,  $(\W,\W,\mu)$ is called a reduced system of $(\V,\W,\lambda)$.
\end{definition}

\begin{proposition} 
	Suppose $(\W,\W,\mu)$ is a reduced system of $(\V,\W,\lambda)$. Let $C_{\V}$ and $C_{\W}$ denote the centers of $(\V,\W,\lambda)$ and $(\W,\W,\mu)$, respectively. Then, the following statements hold:
	\begin{itemize}
		\item [$(a)$] $\mathrm{ran}\,\mu = \mathrm{ran}\,\lambda$ and $\mu^2=\mu$.
		\item [$(b)$] $\lambda(C_{\V}) = \mu(C_{\W})=C_{\W}$. 
		\item [$(c)$] $\mathrm{dim}\,(C_{\V}) = \mathrm{dim}\,\lambda(C_{\V}) = \mathrm{dim}\,\mu(C_{\W}) = \mathrm{dim}\,(C_{\W})$. 
		\item [$(d)$] $e$ is a unit in $\V$ if and only if $\lambda(e)$ is a unit in $\W$.
	\end{itemize}
\end{proposition}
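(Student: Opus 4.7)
The plan is to take the four parts in order, invoking only (C1)--(C2) together with the center characterizations from Propositions \ref{prop: subspace} and \ref{e-orbit prop}, each applied in the appropriate system. For (a), the inclusion $\mathrm{ran}\,\mu\subseteq \mathrm{ran}\,\lambda$ is just (C2); for the reverse, any $q=\lambda(x)\in \mathrm{ran}\,\lambda$ satisfies $\mu(q)=\mu(\lambda(x))=\lambda(x)=q$ by (C1), which simultaneously places $q$ in $\mathrm{ran}\,\mu$ and shows that $\mu$ fixes the common range pointwise. The identity $\mu^2=\mu$ then follows because $\mu(w)\in \mathrm{ran}\,\mu=\mathrm{ran}\,\lambda$ for any $w\in \W$, so it is fixed by $\mu$.

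For (b), first let $c\in C_\V$. By Proposition \ref{prop: subspace}(d), $\lambda(-c)=-\lambda(c)$, so both $\lambda(c)$ and $-\lambda(c)=\lambda(-c)$ lie in $\mathrm{ran}\,\lambda$ and hence are fixed by $\mu$, yielding $\mu(-\lambda(c))=-\lambda(c)=-\mu(\lambda(c))$. Applying Proposition \ref{prop: subspace}(d) inside the reduced system then gives $\lambda(c)\in C_\W$, so $\lambda(C_\V)\subseteq C_\W$. Conversely, take $w\in C_\W$. Proposition \ref{e-orbit prop} applied in the reduced system says the $\mu$-orbit of $w$ is $\{w\}$; since $\mu(w)$ belongs to that orbit (by $\mu^2=\mu$), we conclude $\mu(w)=w$, so $w\in \mathrm{ran}\,\mu=\mathrm{ran}\,\lambda$, and similarly $-w\in \mathrm{ran}\,\lambda$ because $C_\W$ is a subspace. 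Writing $w=\lambda(x)$ and $-w=\lambda(y)$, the relation $\lambda(x)=-\lambda(y)$ triggers Proposition \ref{prop: subspace}(b), producing $x\in C_\V$ with $\lambda(x)=w$; thus $C_\W\subseteq \lambda(C_\V)$. Combined with the observation $\mu|_{C_\W}=\mathrm{id}_{C_\W}$ from the same step, this delivers $\lambda(C_\V)=\mu(C_\W)=C_\W$.

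For (c), Proposition \ref{prop: subspace}(c) makes $\lambda|_{C_\V}$ and $\mu|_{C_\W}$ linear, and since both restrictions are norm-preserving they are injective. Therefore $\dim C_\V=\dim \lambda(C_\V)$ and $\dim C_\W=\dim \mu(C_\W)$, and (b) collapses all four to a common value. For (d), if $C_\V=\R e$ then linearity of $\lambda|_{C_\V}$ and (b) give $C_\W=\lambda(\R e)=\R\lambda(e)$, with $\lambda(e)\neq 0$ by norm-preservation, so $\lambda(e)$ is a unit of the reduced system. Conversely, if $\lambda(e)$ is a unit then $C_\W=\R\lambda(e)$ is one-dimensional, so by (c) $\dim C_\V=1$. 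Since $\lambda(e)\in C_\W=\lambda(C_\V)$, there is $f\in C_\V$ with $\lambda(f)=\lambda(e)$; Proposition \ref{e-orbit prop} gives $[f]=\{f\}$, hence $e=f\in C_\V$, and one-dimensionality then pins down $C_\V=\R e$.

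The only genuinely delicate step will be the inclusion $C_\W\subseteq \lambda(C_\V)$ in (b): the maneuver is to use $\mu(w)=w$ to push both $w$ and $-w$ into $\mathrm{ran}\,\lambda$, then lift them to a pair $(x,y)$ with $\lambda(x)=-\lambda(y)$ so that Proposition \ref{prop: subspace}(b) deposits the preimage directly inside $C_\V$. All other bookkeeping is routine and consists of carefully tagging each use of Propositions \ref{prop: subspace} and \ref{e-orbit prop} to the correct system.
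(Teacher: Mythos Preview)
Your proof is correct. The only notable departure from the paper's argument is in part~$(b)$: the paper dispatches both inclusions in one stroke by invoking Corollary~\ref{lineality result}, which identifies each center's image as the lineality space of the corresponding range cone, yielding
\[
\lambda(C_\V)=\lambda(\V)\cap -\lambda(\V)=\mathrm{ran}\,\lambda\cap -\mathrm{ran}\,\lambda=\mathrm{ran}\,\mu\cap -\mathrm{ran}\,\mu=\mu(C_\W).
\]
You instead establish the two inclusions by hand, using Proposition~\ref{prop: subspace}$(d)$ for $\lambda(C_\V)\subseteq C_\W$ and Proposition~\ref{prop: subspace}$(b)$ for the reverse. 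This is slightly longer but more self-contained, avoiding the lineality-space detour. Your treatment of the converse in~$(d)$ is also more explicit than the paper's: you take care to place $e$ itself in $C_\V$ via the singleton-orbit argument, whereas the paper only argues that the two systems simultaneously have or lack unit elements and then checks the forward direction.
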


\begin{proof}
	$(a)$ The first equality can be seen by combining $(C1)$ and $(C2)$. For the second one, let 
	$w\in \W$. Then, by $(C2)$, $\mu(w)=\lx$ for some $x\in \V$. Using $(C1)$, $\mu(\mu(w)) = \mu(\lx) = \lx = \mu(w)$.
	Hence, $\mu^2=\mu$.
	
	\smallgap
	
	$(b)$ We combine Corollary \ref{lineality result} and $(a)$ above to see
	\[ \lambda(C_{\V}) = \lambda(\V) \cap - \lambda(\V) = \mathrm{ran}\, \lambda \cap - \mathrm{ran}\, \lambda = \mathrm{ran}\, \mu \cap - \mathrm{ran}\, \mu = \mu(C_{\W}). \]
	Now, from Proposition \ref{e-orbit prop} (applied to $(\W,\W,\mu)$), the $\mu$-orbit of any element of $C_{\W}$ is a singleton. However, for any $w\in \W$, we have $\mu(\mu(w))=\mu(w)$; hence, $\mu(w)$ belongs to the $\mu$-orbit of $w$. Thus, when $w\in C_{\W}$, we get $\mu(w)=w$. We see that $\mu(C_{\W})=C_{\W}$.
	Hence, we  have proved $(ii)$.
		
	\smallgap
	
	$(c)$ We know that $C_{\V}$ is a subspace in $\V$ and $\lambda$ is linear on it. Since $\lambda$ is norm-preserving, it is also one-to-one on $C_{\V}$. Hence $C_{\V}$ and $\lambda(C_{\V})$ have the same dimension. As  $\lambda(\V) = C_{\W}$, we see that $C_{\V}$ and $C_{\W}$ also have the same dimension.
	
	\smallgap
	
	$(d)$ By $(c)$, $C_{\V}$ is one-dimensional if and only if $C_{\W}$ is one-dimensional. Thus, $(\V,\W,\lambda)$ has a unit if and only if $(\W,\W,\mu)$ has a unit. Now suppose that $e$ is a unit in $\V$, that is, $e$ is nonzero and  $C_{\V}=\R\,e$. By $(b)$ and the linearity of $\lambda$ on $C_{\V}$, we have  $C_{\W}=\R\,\lambda(e)$. Since $\lambda$ is also norm-preserving, we have $\lambda(e)\neq 0$. Thus, $\lambda(e)$ is a unit in $\W$.
\end{proof}

Suppose $(\W,\W,\mu)$ is a reduced system of $(\V,\W,\lambda)$. Then, by above, $\mu^2=\mu$. On the other hand, if  $(\W,\W,\mu)$ is a FTvN system with $\mu^2=\mu$, then,  by taking $\V=\W$ and $\lambda=\mu$, we see that $(\W,\W,\mu)$ is a reduced system of $(\V,\W,\lambda)$. Thus, a system $(\W,\W,\mu)$ is the reduced system of some FTvN system if and only if $\mu^2=\mu$.
So, every NDS is a reduced FTvN system (of itself).
By an earlier result, a FTvN system $(\W,\W,\mu)$ with $\mu^2=\mu$ comes from an NDS if and only if the system is orbit-transitive. 

\gap

\noindent{\bf Problem.} {\it Characterize FTvN system $(\W,\W,\mu)$, where $\W$ is finite dimensional and $\mu^2=\mu$.}

\begin{theorem}\label{Lidskii type theorem}
	Suppose $(\W,\W,\mu)$ is a reduced system of $(\V,\W,\lambda)$ with $\W$ finite dimensional. Let $F:= \mathrm{ran}\,\lambda$ and $F^*$ denote the dual of the cone $F$ in $\W$. Then,
	the following statements hold:
	\begin{itemize}
		\item [$(a)$] If $u,v\in F$ and $u-v \in F^*$, then $v \prec u$ in $\W$.
		\item [$(b)$] For all $x_1,x_2,\ldots, x_k\in \V$, $\lambda(x_1+x_2+\cdots+x_k)\prec \lambda(x_1)+\lambda(x_2)+\cdots+\lambda(x_k)$ in $\W$.
		\item [$(c)$] $x\prec y$ in $\V$ implies $\lx \prec \ly$ in $\W$. The converse holds if $\V$ is finite dimensional.
	\end{itemize}
\end{theorem}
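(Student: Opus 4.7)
The plan is to prove the three parts sequentially, leveraging the identities $\mathrm{ran}\,\mu = F$ and $\mu^2 = \mu$ established in the proposition just before this theorem. These give $\mu(w) = w$ for every $w \in F$, which is what turns the dual-cone hypothesis of part (a) into FTvN inequalities inside $(\W,\W,\mu)$.

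For part (a), I would apply the reverse direction of Proposition \ref{basic majorization prop}(a) in the system $(\W,\W,\mu)$, which is available since $\W$ is finite dimensional. Thus it suffices to verify $\ip{c}{v} \leq \ip{\mu(c)}{\mu(u)} = \ip{\mu(c)}{u}$ for every $c \in \W$. The FTvN inequality $(A2)$ in $(\W,\W,\mu)$ gives $\ip{c}{v} \leq \ip{\mu(c)}{\mu(v)} = \ip{\mu(c)}{v}$, and since $\mu(c) \in F$ while $u - v \in F^*$, one has $\ip{\mu(c)}{v} \leq \ip{\mu(c)}{u}$. Chaining these two inequalities delivers part (a).

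Part (b) reduces directly to (a) by setting $v := \lambda(x_1 + \cdots + x_k)$ and $u := \lambda(x_1) + \cdots + \lambda(x_k)$: both lie in $F$ (using that $F$ is a convex cone, Theorem \ref{basic theorem}(d)), and the condition $u - v \in F^*$ follows by testing against an arbitrary $\lambda(z) \in F$ and invoking Theorem \ref{basic theorem}(c). The forward direction of part (c) is then immediate: if $x = \sum_{i=1}^k t_i y_i$ is a convex combination with $y_i \in [y]$, applying part (b) to the summands $t_i y_i$ and using positive homogeneity together with $\lambda(y_i) = \lambda(y)$ rewrites the conclusion as $\lambda(x) \prec \lambda(y)$.

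For the converse of (c), assuming $\V$ is finite dimensional, I plan to chain two uses of Proposition \ref{basic majorization prop}(a). From $\lambda(x) \prec \lambda(y)$ in $\W$, the forward direction in $(\W,\W,\mu)$ gives $\ip{w}{\lambda(x)} \leq \ip{\mu(w)}{\mu(\lambda(y))}$ for all $w \in \W$; specializing $w = \lambda(c)$ for $c \in \V$ and using $\mu \circ \lambda = \lambda$ yields $\ip{\lambda(c)}{\lambda(x)} \leq \ip{\lambda(c)}{\lambda(y)}$. Combined with the FTvN bound $\ip{c}{x} \leq \ip{\lambda(c)}{\lambda(x)}$, the reverse implication of Proposition \ref{basic majorization prop}(a) in $(\V,\W,\lambda)$, which requires $\V$ finite dimensional, concludes $x \prec y$. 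The main delicacy, rather than a genuine obstacle, is keeping track of which system each instance of Proposition \ref{basic majorization prop}(a) lives in and ensuring the finite-dimensionality hypothesis is available in the relevant space ($\W$ for parts (a) and (b), and $\V$ for the converse of (c)); once the identities $\mathrm{ran}\,\mu = F$, $\mu^2 = \mu$, and $\mu \circ \lambda = \lambda$ are in hand, the rest is formal.
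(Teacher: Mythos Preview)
Your proposal is correct and follows essentially the same route as the paper: parts $(b)$ and $(c)$ are argued exactly as the paper does (reducing $(b)$ to $(a)$ via Theorem \ref{basic theorem}$(c)$, and handling $(c)$ by combining $(b)$ with Proposition \ref{basic majorization prop}$(a)$ in the two systems). For part $(a)$ you invoke the reverse direction of Proposition \ref{basic majorization prop}$(a)$ directly, whereas the paper unpacks that same step as a separation-and-contradiction argument; since the reverse implication in Proposition \ref{basic majorization prop}$(a)$ is itself proved by strong separation, the two arguments are the same in substance, yours being slightly more economical in presentation.
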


\begin{proof}
	$(a)$ Suppose $u,v\in F$, $u-v\in F^*$, and $v \not\in \conv\,[u]$ in $\W$. As $\W$ is finite dimensional, $\conv \,[u]$ is compact and convex; hence,
	by the strong separation theorem, there exist $d\in \W$ and  $\alpha\in \R$ such that
	\[ \ip{d}{v} > \alpha \geq \max_{w\in [u]} \ip{d}{w}. \]
	Now, in the system $(\W, \W, \mu)$ we have
	\[ \ip{\mu(d)}{\mu(v)} \geq \ip{d}{v} \,\, \text{and} \,\, \max_{w\in [u]} \ip{d}{w} = \ip{\mu(d)}{\mu(u)}. \]
	Hence,
	\[ \ip{\mu(d)}{\mu(v)} > \ip{\mu(d)}{\mu(u)}. \]
	However, from $(C1)$, $u, v \in F \Rightarrow \mu(u)=u, \,\mu(v)=v$ and, from $(C2)$, $r:=\mu(d)\in F$. The above inequality implies that 
	$\ip{r}{v} > \ip{r}{u}$, that is, $\ip{r}{u-v} < 0$, contradicting the assumption $u-v\in F^*$. We thus have Item $(a)$.
	
	\smallgap
	
	$(b)$ From Theorem \ref{basic theorem}, for all $c\in \V$, 
	\[ \Big\langle \lc, \lambda \Big( \sum_{i=1}^{k}x_i \Big) \Big\rangle \leq \sum_{i=1}^{k} \ip{\lc}{\lambda(x_i)}. \]
	Let $u:=\sum_{i=1}^{k}\lambda(x_i)$, $v:=\lambda\big (\sum_{i=1}^{k}x_i\big )$, and $r:=\lc$. Then, $\ip{r}{v} \leq \ip{r}{u}$. As $u, v \in F$ and $r=\lc$ is arbitrary in $F$ (=$\lambda(\V)$), we see that $u-v\in F^*$. From Item $(a)$, $v\prec u$, that is,
	\[ \lambda(x_1 + x_2 + \cdots + x_k) \prec \lambda(x_1) + \lambda(x_2) + \cdots + \lambda(x_k). \]
	
	\smallgap
	
	$(c$) Suppose $x\prec y$ in $\V$ so that $x = \sum_{i=1}^{k}t_iy_i$, where $t_i$s are nonnegative with sum one and $y_i\in [y]$ for all $i$. By using the (positive) homogeneity of $\lambda$ and Item $(b)$, we see that 
	\[ \lx\prec \sum_{i=1}^{k}t_i\lambda(y_i)=\ly, \]
	as $\lambda(y_i)=\ly$ for all $i$.
	
	Now suppose that $\V$ is finite dimensional, $\lx\prec \ly$ in $\W$, but $x\not\prec y$ in $\V$, that is, $x\not \in \conv\,[y]$. As in the proof of Item $(a)$ in Proposition \ref{basic majorization prop}, there exists a $c\in \V$ such that $\ip{c}{x} > \ip{\lc}{\ly}$. On the other hand, by applying Item $(a)$ in Prop \ref{basic majorization prop} in the FTvN system $(\W,\W,\mu)$, we see that 
	\[ \lx\prec \ly \Rightarrow \ip{\lc}{\lx} \leq \ip{\mu(\lc)}{\mu(\ly)}. \] 
	With $\mu(\lc)=\lc$ and $\mu(\ly)=\ly$ (coming from condition $(C1)$), we see that $\ip{\lc}{\lx} \leq \ip{\lc}{\ly}$, which contradicts a previous inequality. Thus, $x \prec y$ in $\V$.
\end{proof}

\begin{corollary}
	Suppose $(\W,\W,\mu)$ is a reduced system of $(\V,\W,\lambda)$, where both $\V$ and $\W$ are finite dimensional. Let $x,y\in \V$. Then,
	\[ x\prec y \text{ in } \V \,\, \big( i.e.,\, x\in \conv\,[y] \big) \,\, \iff \,\, \lx \prec \ly \text{ in } \W. \]
	In particular, if the system $(\V,\Rn,\lambda)$ comes from a Euclidean Jordan algebra $\V$, then
	\[ x\prec y \text{ in } \V \,\, \big( i.e.,\, x\in \conv\,[y] \big) \,\, \iff \,\, \lx \prec \ly \text{ in } \Rn. \]
\end{corollary}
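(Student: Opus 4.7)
The plan is to obtain the corollary as a direct specialization of Theorem~\ref{Lidskii type theorem}. For the general equivalence $x\prec y$ in $\V$ iff $\lx\prec \ly$ in $\W$, nothing new is needed: part~$(c)$ of the theorem already states both directions, the forward implication unconditionally and the converse under finite dimensionality of $\V$, both of which are hypotheses of the corollary. So I would simply cite part~$(c)$ of the theorem and be done with the first statement.

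For the ``in particular'' claim, I would verify that the companion system $(\Rn,\Rn,\mu)$ with $\mu(q):=q^{\downarrow}$, which is already known to be a FTvN system by Example~\ref{rn}, is a reduced system of the Euclidean Jordan algebra system $(\V,\Rn,\lambda)$ in the sense of the definition of Section~9. Condition~$(C1)$ is immediate: $\lambda(x)$ is, by convention, the eigenvalue vector written in decreasing order, so $\mu(\lambda(x))=\lambda(x)$ for every $x\in \V$. Condition~$(C2)$ requires that every decreasing vector $q=(q_1,\ldots,q_n)\in \Rn$ lies in $\mathrm{ran}\,\lambda$; this follows by fixing a Jordan frame $\{e_1,\ldots,e_n\}$ in $\V$ and noting that $x:=q_1e_1+\cdots+q_ne_n$ satisfies $\lambda(x)=q$ by the spectral theorem. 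With $(C1)$ and $(C2)$ verified and both $\V$ and $\Rn$ finite dimensional, the first part of the corollary applies and yields the claim.

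There is no serious obstacle here; the corollary is a bookkeeping consequence of Theorem~\ref{Lidskii type theorem}. The only mild point to keep in mind is that $(C2)$ is the inclusion $\mathrm{ran}\,\mu \subseteq \mathrm{ran}\,\lambda$, so the verification requires producing an element of $\V$ with a prescribed (decreasing) eigenvalue vector rather than the other way around, which is precisely what the spectral theorem provides.
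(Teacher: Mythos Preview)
Your proposal is correct and matches the paper's approach: the corollary is stated without proof, being an immediate consequence of Theorem~\ref{Lidskii type theorem}$(c)$, and the paper has already observed (in the bulleted examples opening Section~9) that for a Euclidean Jordan algebra the system $(\Rn,\Rn,\mu)$ with $\mu(q)=q^\downarrow$ satisfies $\mu\circ\lambda=\lambda$ and $\mathrm{ran}\,\mu=\mathrm{ran}\,\lambda$, so your verification of $(C1)$ and $(C2)$ is exactly the intended reasoning.
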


\begin{proposition} 
	Consider the FTvN system $(\V,\Rn,\lambda)$ induced from a complete and isometric hyperbolic polynomial (see Example \ref{hyperbolic}). Let $x,y\in \V$. Then,
	\[ x\prec y \text{ in } \V \,\, \big( i.e.,\, x\in \conv\,[y] \big) \,\, \iff \,\, \lx \prec \ly \text{ in } \Rn. \]
\end{proposition}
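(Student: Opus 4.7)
My plan is to prove the two implications separately: the forward direction is already available from earlier work in the paper, while the reverse requires a short separation argument that sidesteps the obstruction to applying the preceding corollary. For $x \prec y \Rightarrow \lx \prec \ly$, I would simply invoke equation \eqref{implication from gurvits}, established in Example \ref{hyperbolic} as a consequence of Gurvits's hyperbolic Lidskii-type inequality; notice that this direction does not even require the completeness or isometric hypotheses on $p$.

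For the reverse direction, I would argue by contradiction, mirroring the converse argument in the proof of Theorem \ref{Lidskii type theorem}(c). Assume $\lx \prec \ly$ in $\Rn$ but $x \notin \conv\,[y]$. Since $\V$ is finite dimensional in the hyperbolic setting, and $\lambda$ is continuous and norm-preserving, the orbit $[y]$ is closed and bounded, hence compact, so $\conv\,[y]$ is compact and convex. The strong separation theorem (Theorem \ref{strong separation theorem}) yields $c \in \V$ with
\[ \ip{c}{x} > \max_{z \in [y]} \ip{c}{z}. \]
Evaluating the right side via (\ref{intro ftvn}) and bounding the left by (A2) gives $\ip{\lc}{\lx} \geq \ip{c}{x} > \ip{\lc}{\ly}$. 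On the other hand, $\lx \prec \ly$ in $\Rn$ is exactly majorization in the FTvN system $(\Rn,\Rn,\mu)$ of Example \ref{rn} with $\mu(q) = q^\downarrow$. Applying Proposition \ref{basic majorization prop}(a) there with the test vector $\lc$, and using that $\lc$ and $\ly$ are by construction already in decreasing order (so $\mu(\lc) = \lc$ and $\mu(\ly) = \ly$), gives $\ip{\lc}{\lx} \leq \ip{\lc}{\ly}$, contradicting the strict inequality above.

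The main obstacle, and the reason the preceding corollary cannot be invoked directly, is that it is unclear whether $(\Rn, \Rn, \mu)$ qualifies as a reduced system of $(\V, \Rn, \lambda)$. Condition $(C1)$ is automatic because $\lambda$ already outputs decreasing vectors, but $(C2)$ demands that every decreasing vector in $\Rn$ be realized as $\lambda(v)$ for some $v \in \V$, and this surjectivity is not apparent from the completeness and isometric hypotheses on $p$ alone. The separation argument sketched above evades $(C2)$ entirely because the only test vectors that matter, namely $\lc$ and $\ly$, are automatically fixed by $\mu$ regardless of whether they exhaust its range.
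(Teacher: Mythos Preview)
Your proposal is correct and follows essentially the same approach as the paper: the forward direction is dispatched via \eqref{implication from gurvits}, and the reverse is handled by mimicking the converse part of Theorem~\ref{Lidskii type theorem}(c), using only condition $(C1)$ (i.e., $\mu(\lc)=\lc$ and $\mu(\ly)=\ly$) and sidestepping the unavailable $(C2)$. Your diagnosis of why the preceding corollary fails to apply directly also matches the paper's own note.
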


\noindent{\it Note:} The previous corollary cannot be applied here because the condition $\mathrm{ran}\,\mu \subseteq \mathrm{ran}\,\lambda$ with $\mu(q) = q^\downarrow$ on $\Rn$ may not hold; so, $(\Rn, \Rn, \mu)$  may not be a reduced system of $(\V, \Rn, \lambda)$.

\gap

\begin{proof}
	The implication $x\in \conv\,[y] \Rightarrow \lx \prec \ly$ has already been observed for any hyperbolic polynomial, see \eqref{implication from gurvits}. 
	For the  reverse implication, we mimic the proof given in Item $(c)$ of Theorem  \ref{Lidskii type theorem} with the observation that condition $(C1)$ holds in our setting of $(\V, \Rn, \lambda)$ and $(\Rn, \Rn, \mu)$, where $\mu(q)=q^\downarrow$.
\end{proof}

\noindent{\bf Remark.} A number of interesting results on reduced systems of Eaton triples appear in \cite{tam}.

%%%%%%%%%%%%%%%%%%%%%%%%%%%%%%%%%%%%%%%%%%%%%%%%%%%%%%%%%%%%%%%%%%%%%%%%%
\section{Appendix}

For ease of reference, we now provide the definitions of normal decomposition systems and Eaton triples.

\subsection{Normal decomposition systems}

\begin{definition} \cite{lewis}\label{definition: nds}
	Let $\V$ be a real inner product space, $\G$ be a closed subgroup of the orthogonal group of $\V$, and $\gamma : \V  \to \V$ be a map satisfying the following conditions:
	\begin{itemize}
		\item [$(a)$] $\gamma$ is $\G$-invariant, that is, $\gamma(Ax) = \gamma(x)$ for all $x \in \V$ and $A \in \G$.
		\item [$(b)$] For each $x\in \V$, there exists $A\in \G$ such that $x=A\gamma(x)$.
		\item [$(c)$] For all $x,y\in \V$, we have $\ip{x}{y} \leq \ip{\gamma(x)}{\gamma(y)}$.
	\end{itemize}
	Then, $(\V, \G, \gamma)$ is called a {\it normal decomposition system}.
\end{definition}

Items $(a)$ and $(b)$ in the above definition show that $\gamma^2=\gamma$ and $\norm{\gamma(x)} = \norm{x}$ for all $x$.
We state a few relevant properties.

\begin{proposition} [\cite{lewis}, Proposition 2.3 and Theorem 2.4] \label{prop: lewis}
	Let $(\V,\G,\gamma)$ be a normal decomposition system. Then,
	\begin{itemize}
		\item [$(i)$] For any two elements $x$ and $y$ in $\V$, we have
		\[ \max_{A \in \G}\, \ip{Ax}{y} = \ip{\gamma(x)}{\gamma(y)}. \]
		Also, $\ip{x}{y} = \ip{\gamma(x)}{\gamma(y)}$ holds for two elements $x$ and $y$ if and only if there exists an $A \in \G$ such that $x = A\gamma(x)$ and $y = A\gamma(y)$.
		
		\item [$(ii)$] The range of $\gamma$, denoted by $F$, is a closed convex cone in $\V$.
	\end{itemize}
\end{proposition}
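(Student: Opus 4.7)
The proof naturally decomposes into three pieces: the max formula in (i), the equality characterization in (i), and the cone/closedness/convexity statements in (ii). My plan is to handle them in that order, since the later pieces re-use the earlier ones.

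For the max formula in (i), the upper bound is immediate: for any $A\in\G$, combining (a) with (c) yields $\ip{Ax}{y}\leq \ip{\gamma(Ax)}{\gamma(y)} = \ip{\gamma(x)}{\gamma(y)}$. For attainment, I would invoke (b) to produce $A_1,A_2\in\G$ with $x=A_1\gamma(x)$ and $y=A_2\gamma(y)$, and then set $A:=A_2A_1^{-1}\in\G$. Since $Ax=A_2A_1^{-1}A_1\gamma(x)=A_2\gamma(x)$, orthogonality of $A_2$ gives $\ip{Ax}{y} = \ip{A_2\gamma(x)}{A_2\gamma(y)} = \ip{\gamma(x)}{\gamma(y)}$.

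For the equality characterization in (i), the ``if'' direction is immediate from orthogonality of $A$. For the ``only if'' direction, assume $\ip{x}{y}=\ip{\gamma(x)}{\gamma(y)}$. The plan is first to reduce to a normalized situation: pick $A_1$ from (b) with $x=A_1\gamma(x)$ and replace $y$ by $\widetilde{y}:=A_1^{-1}y$. Then (a) gives $\gamma(\widetilde{y})=\gamma(y)$, and the hypothesis rewrites as $\ip{\gamma(x)}{\widetilde{y}}=\ip{\gamma(x)}{\gamma(\widetilde{y})}$. It now suffices to produce $A_3\in\G$ with $\widetilde{y}=A_3\gamma(\widetilde{y})$ which additionally fixes $\gamma(x)$, because then $A:=A_1A_3$ satisfies both $A\gamma(x)=x$ and $A\gamma(y)=y$. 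Applying (b) to $\widetilde y$ supplies \emph{some} such $A_3$; any other choice differs from it by an element of the stabilizer $\G_{\gamma(\widetilde y)}$. The task reduces to showing that the coset $A_3\,\G_{\gamma(\widetilde y)}$ meets the stabilizer $\G_{\gamma(x)}$. The plan is to combine the extremal information that both the identity and some element of this coset achieve $\max_{B\in\G}\ip{B\gamma(x)}{\widetilde{y}}=\ip{\gamma(x)}{\gamma(\widetilde y)}$ with compactness of $\G$ in the finite-dimensional setting to extract a suitable coset representative. I expect this coset-selection argument to be the main obstacle of the whole proposition: the individual applications of (b) to $x$ and to $y$ give two group elements whose composition need not simultaneously work, and only a careful use of the equality hypothesis and extremality over $\G$ reconciles the two.

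For (ii), I would separately verify the three properties of $F:=\mathrm{ran}\,\gamma$. Since $\gamma^2=\gamma$ (a direct consequence of (a) and (b)), we have $F=\{w\in\V:\gamma(w)=w\}$, so closedness follows from the continuity of $\gamma$ (a $1$-Lipschitz bound coming from (c) and norm-preservation, as in Theorem \ref{basic theorem}(b)). For the cone property, I would first establish $\gamma(tx)=t\gamma(x)$ for $t\geq 0$: norm-preservation gives $\norm{\gamma(tx)}=t\norm{\gamma(x)}$, and (c) forces $t\norm{x}^2\leq\ip{\gamma(tx)}{\gamma(x)}\leq\norm{\gamma(tx)}\norm{\gamma(x)}=t\norm{x}^2$, so Cauchy--Schwarz equality plus matching norms pins down $\gamma(tx)=t\gamma(x)$. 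For convexity, take $u,v\in F$, $t\in[0,1]$, and set $w:=(1-t)u+tv$; by (b), write $w=A\gamma(w)$ for some $A\in\G$. Expanding
\[ \norm{w}^2=\ip{w}{A\gamma(w)}=(1-t)\ip{u}{A\gamma(w)}+t\ip{v}{A\gamma(w)} \]
and applying (c) termwise (using $\gamma(u)=u$, $\gamma(v)=v$, and $\gamma(A\gamma(w))=\gamma(w)$, which comes from (a) together with $\gamma^2=\gamma$) yields $\norm{w}^2\leq\ip{w}{\gamma(w)}$. Combined with $\ip{w}{\gamma(w)}\leq \norm{w}\norm{\gamma(w)}=\norm{w}^2$, this forces equality throughout Cauchy--Schwarz, giving $w=\gamma(w)$, hence $w\in F$.
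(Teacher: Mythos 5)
This proposition is quoted from Lewis's paper (the bracketed citation is the paper's entire ``proof''), so there is no in-paper argument to compare against; your attempt must therefore stand on its own. Most of it does: the max formula, the ``if'' direction of the equality characterization, and all of part (ii) (idempotence, the Lipschitz/closedness argument, positive homogeneity via the Cauchy--Schwarz pinch, and the convexity argument) are correct and complete in the stated generality.

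The genuine gap is the ``only if'' direction of the equality characterization, which is the substantive content of Lewis's Theorem 2.4. Your reduction to finding $A_3$ with $\widetilde y=A_3\gamma(\widetilde y)$ and $A_3\gamma(x)=\gamma(x)$ is set up correctly, and the reformulation as ``the coset $A_3\,\G_{\gamma(\widetilde y)}$ meets $\G_{\gamma(x)}$'' is accurate, but you never prove it: you only announce a plan to ``combine extremal information with compactness of $\G$,'' and you yourself flag this as the main obstacle. Two problems: first, the paper's definition of a normal decomposition system places no finite-dimensionality hypothesis on $\V$, so compactness of $\G$ is not available; second, even granting compactness, no mechanism is given for extracting the desired coset representative. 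The step can, however, be closed by an elementary argument that avoids stabilizers entirely. From the hypothesis $\ip{x}{y}=\ip{\gamma(x)}{\gamma(y)}$ and norm preservation one gets $\norm{\gamma(x)+\gamma(y)}=\norm{x+y}=\norm{\gamma(x+y)}$, while your max formula (with $A=\mathrm{Id}$) gives $\ip{\gamma(x+y)}{\gamma(x)+\gamma(y)}\geq\ip{x+y}{x}+\ip{x+y}{y}=\norm{x+y}^2$; equality in Cauchy--Schwarz then forces $\gamma(x+y)=\gamma(x)+\gamma(y)$. Now apply (b) to $x+y$ to get $A\in\G$ with $x+y=A\gamma(x)+A\gamma(y)=:u+v$. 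Then
\[ \norm{x}^2+\ip{\gamma(x)}{\gamma(y)}=\ip{x}{x+y}=\ip{x}{u}+\ip{x}{v}\leq\norm{x}\norm{u}+\ip{\gamma(x)}{\gamma(v)}=\norm{x}^2+\ip{\gamma(x)}{\gamma(y)}, \]
so both inequalities are tight; the equality case of Cauchy--Schwarz (with $\norm{u}=\norm{x}$) gives $u=x$, hence $v=y$, i.e.\ $x=A\gamma(x)$ and $y=A\gamma(y)$. This works in any real inner product space and uses only the pieces you have already established.
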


\subsection{Eaton triples}
These were introduced and studied in \cite{eaton-perlman, eaton1, eaton2} from the perspective of majorization techniques in probability. They were also extensively studied in the papers of Tam and Niezgoda, see the references.

\begin{definition} \label{definition: eaton triple}
	Let $\V$ be a finite dimensional real inner product space, $\G$ be a closed subgroup of the orthogonal group of $\V$, and $F$ be a closed convex cone in $\V$ satisfying the following conditions:
	\begin{itemize}
		\item [$(a)$] $Orb(x)\cap F \neq \emptyset$ for all $x \in \V$, where $Orb(x) := \{Ax : A \in \G\}$.
		\item [$(b)$] $\ip{x}{Ay} \leq \ip{x}{y}$ for all $x, y \in F$ and $A \in \G$.
	\end{itemize}
	Then, $(\V, \G, F)$ is called an {\it Eaton triple}.
\end{definition}

It has been shown (see \cite{niezgoda-group majorization}, page 14) that in an Eaton triple $(V, \G, F)$, $Orb(x)\cap F$ consists of exactly one element for each $x\in \V$. Defining $\gamma : \V \to \V$ such that $Orb(x) \cap F = \{\gamma(x)\}$, it has been observed that $(\V, \G, \gamma)$ is a normal decomposition system. Also, given a finite dimensional normal decomposition system $(\V,\G, \gamma)$, with $F := \gamma(\V)$, $(\V, \G, F)$ becomes an Eaton triple. Thus, {\it finite dimensional normal decomposition systems are equivalent to Eaton triples} \cite{lewis, lewis3, niezgoda-commutation}.

\subsection{Rearrangement inequality for measurable functions}
The notion of rearrangement of a function, systematically introduced by Hardy and Littlewood, has played a key role in proving inequalities in classical and applied analysis. The definitions and properties in this subsection can be found in \cite{bennett-sharpley}, Chapter 2. 

\gap

Let $(\Omega, \Sigma, \mu)$ denote a $\sigma$-finite measure space. 

\begin{definition}
	Let $f : \Omega \to \R$ be a $\Sigma$-measurable function. 
	\begin{itemize}
		\item The function $\mu_f : [0, \infty) \to [0, \infty]$ defined by
		\[ \mu_f(\alpha) = \mu \big( \{x \in \Omega : \abs{f(x)} > \alpha \} \big) \]
		is called the distribution function of $f$.
		\item The decreasing rearrangement of $f$ is the function $f^* : [0, \infty) \to [0, \infty]$ defined by
		\[ f^*(t) := \inf \{ \alpha \geq 0 : \mu_f(\alpha) \leq t \}, \]
		where we use the convection that $\inf \emptyset = \infty$.
	\end{itemize}
\end{definition}
The next propositions establish some basic properties of the distribution function and decreasing rearrangement.

\begin{proposition}
	The following properties hold:
	\begin{itemize}
		\item[(i)] $f$ and $f^*$ are equimeasurable, that is,
		\[ \mu \big( \{x \in \Omega : \abs{f(x)} > \alpha \} \big) = m \big( \{t > 0 : f^*(t) > \alpha \} \big) \]
		for all $\alpha \geq 0$, where $m$ is the Lebesgue measure.
		
		\item[(ii)] $\norm{f}_{L_{p}(\Omega)} = \norm{f^*}_{L_{p}[0, \infty)}$ for all positive real numbers $p$.
		
		\item[(iii)] The Hardy-Littlewood-P{\'o}lya inequality holds, i.e., 
		\[ \int _{\Omega} \abs{fg} \; d\mu \leq \int_{0}^{\infty} f^{*}(t)g^{*}(t) \; dt. \]
	\end{itemize}

\end{proposition}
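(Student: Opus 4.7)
The plan is to prove the three parts in order, using the layer-cake formula as the unifying tool and deriving (ii) and (iii) from the equimeasurability established in (i).

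For part (i), the key is to compute the set $\{t>0 : f^*(t) > \alpha\}$ explicitly from the definition of $f^*$ as an infimum. First I would verify the equivalence $f^*(t) > \alpha \iff \mu_f(\alpha) > t$: if $\mu_f(\alpha) > t$, then $\alpha$ does not lie in $\{\beta \geq 0 : \mu_f(\beta) \leq t\}$, and monotonicity of $\mu_f$ forces the infimum defining $f^*(t)$ to strictly exceed $\alpha$; conversely, $\mu_f(\alpha) \leq t$ would put $\alpha$ in that set, forcing $f^*(t) \leq \alpha$. Consequently $\{t > 0 : f^*(t) > \alpha\} = (0, \mu_f(\alpha))$, whose Lebesgue measure is exactly $\mu_f(\alpha)$, giving equimeasurability.

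For part (ii), I would apply the layer-cake identity
\[
\int_{\Omega} |f|^p \, d\mu = p \int_0^\infty \alpha^{p-1} \mu_f(\alpha) \, d\alpha,
\]
valid by Fubini once one writes $|f(x)|^p = p\int_0^{|f(x)|}\alpha^{p-1}\,d\alpha$. The analogous identity for $f^*$ on $([0,\infty), m)$ gives $\|f^*\|_{L_p}^p = p\int_0^\infty \alpha^{p-1}\mu_{f^*}(\alpha)\,d\alpha$. By (i) the distribution functions $\mu_f$ and $\mu_{f^*}$ coincide, so the two integrals are equal.

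For part (iii), I would again use layer cake, this time in two variables: write $|f(x)g(x)| = \int_0^\infty\!\!\int_0^\infty \chi_{\{|f|>\alpha\}}(x)\chi_{\{|g|>\beta\}}(x)\,d\alpha\,d\beta$ and integrate over $\Omega$ to obtain
\[
\int_\Omega |fg|\,d\mu = \int_0^\infty\!\!\int_0^\infty \mu\bigl(\{|f|>\alpha\}\cap\{|g|>\beta\}\bigr)\,d\alpha\,d\beta \leq \int_0^\infty\!\!\int_0^\infty \min\bigl(\mu_f(\alpha),\mu_g(\beta)\bigr)\,d\alpha\,d\beta.
\]
Doing the same computation for $\int_0^\infty f^*(t)g^*(t)\,dt$, and exploiting the fact that $f^*$ and $g^*$ are non-increasing (so that $\{t : f^*(t)>\alpha,\; g^*(t)>\beta\}$ is the interval $[0,\min(\mu_f(\alpha),\mu_g(\beta)))$ by part (i)), the right-hand side is exactly $\int_0^\infty f^* g^*\,dt$. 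Combining the two gives the Hardy-Littlewood-Pólya inequality.

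The only mildly subtle point I expect is the careful handling of the infimum in the characterization $\{f^*>\alpha\}=(0,\mu_f(\alpha))$ in part (i)—in particular the strict-versus-non-strict inequalities and the right-continuity of $f^*$—since everything downstream rests on it. Once (i) is correct, (ii) and (iii) are essentially mechanical applications of Fubini.
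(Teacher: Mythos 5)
Your argument is essentially correct, but note that the paper itself does not prove this proposition at all: it is stated in the Appendix as a recollection of standard facts about decreasing rearrangements, with a pointer to Bennett--Sharpley, \emph{Interpolation of Operators}, Chapter 2. So there is no in-paper proof to compare against; what you have written is the standard self-contained layer-cake argument, and it does the job. The one place to tighten is the forward implication in part (i): from $\mu_f(\alpha) > t$ you assert that ``monotonicity of $\mu_f$ forces the infimum defining $f^*(t)$ to strictly exceed $\alpha$,'' but monotonicity alone only yields $f^*(t) \geq \alpha$, since the set $\{\beta \geq 0 : \mu_f(\beta) \leq t\}$ could a priori have infimum exactly $\alpha$ through values $\beta > \alpha$. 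What closes the gap is the right-continuity of the \emph{distribution function} $\mu_f$ (not of $f^*$, which is the property you name in your closing remark): since $\{|f| > \alpha\} = \bigcup_{\beta > \alpha} \{|f| > \beta\}$ is an increasing union, continuity from below gives $\mu_f(\alpha) = \lim_{\beta \downarrow \alpha} \mu_f(\beta)$, so $\mu_f(\alpha) > t$ forces $\mu_f(\beta) > t$ for all $\beta$ in a right neighborhood of $\alpha$, whence $f^*(t) > \alpha$. With that fixed, the identity $\{t > 0 : f^*(t) > \alpha\} = (0, \mu_f(\alpha))$ holds (also in the cases $\mu_f(\alpha) = 0$ or $\infty$), and your parts (ii) and (iii) then follow exactly as you describe by Tonelli, using $m\bigl((0,\mu_f(\alpha)) \cap (0,\mu_g(\beta))\bigr) = \min\bigl(\mu_f(\alpha), \mu_g(\beta)\bigr) \geq \mu\bigl(\{|f| > \alpha\} \cap \{|g| > \beta\}\bigr)$ for (iii).
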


\begin{proposition}
	There exists only one right-continuous decreasing function $f^*$ equimeasurable with $f$. Hence, the decreasing rearrangement is unique.
\end{proposition}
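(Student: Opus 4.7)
The plan is to recover $g$ uniquely from its distribution function $m(\{t \geq 0 : g(t) > \alpha\})$ whenever $g$ is decreasing and right-continuous on $[0,\infty)$, and then use equimeasurability to identify this function with $f^*$. Existence of a right-continuous decreasing function equimeasurable with $f$ is already supplied by $f^*$ itself together with Item $(i)$ of the preceding proposition, so the substantive task is uniqueness.

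First, I would establish the following structural observation: for any decreasing right-continuous $g : [0, \infty) \to [0, \infty]$ and any $\alpha \geq 0$, the super-level set $A_\alpha := \{t \geq 0 : g(t) > \alpha\}$ is a half-open interval $[0, a_\alpha)$, where $a_\alpha = \sup A_\alpha$. Downward closure of $A_\alpha$ follows from monotonicity: if $s < s_0$ and $g(s_0) > \alpha$, then $g(s) \geq g(s_0) > \alpha$. If $a_\alpha$ were to belong to $A_\alpha$, right-continuity of $g$ would force $g > \alpha$ on some interval $[a_\alpha, a_\alpha + \varepsilon)$, contradicting the definition of $a_\alpha$. In particular, $m(A_\alpha) = a_\alpha$.

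Next, fix $t \geq 0$. Combining monotonicity and right-continuity, $\sup_{s > t} g(s) = g(t+) = g(t)$, which yields the chain of equivalences
\[
m(A_\alpha) \leq t \,\,\iff\,\, g(s) \leq \alpha \,\,\text{for all}\,\, s > t \,\,\iff\,\, g(t) \leq \alpha.
\]
Assuming now that $g$ is equimeasurable with $f$, we have $m(A_\alpha) = \mu_f(\alpha)$ for every $\alpha \geq 0$, so by the definition of $f^*$,
\[
f^*(t) \;=\; \inf\{\alpha \geq 0 : \mu_f(\alpha) \leq t\} \;=\; \inf\{\alpha \geq 0 : g(t) \leq \alpha\} \;=\; g(t).
\]
Hence $g = f^*$ pointwise, completing the uniqueness half.

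The only delicate point I anticipate concerns the conventions at infinity, namely the case $g(t) = +\infty$ or $\mu_f(\alpha) = +\infty$. With the standing convention $\inf \emptyset = +\infty$ (built into the definition of $f^*$) and the $\sigma$-finiteness of $\mu$, both equivalences above remain valid in the extended real line: the condition ``$g(s) \leq \alpha$ for all $s > t$'' degenerates correctly when $g(t) = +\infty$, and the identification $f^*(t) = g(t)$ persists.
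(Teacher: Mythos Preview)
The paper does not actually prove this proposition; it appears in the Appendix as background material quoted from Bennett--Sharpley, with no argument supplied. So there is nothing to compare against, and your task reduces to giving a self-contained proof.

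Your argument is correct and is the standard one. The key identity you establish,
\[
g(t)=\inf\{\alpha\geq 0:\, m(\{s\geq 0: g(s)>\alpha\})\leq t\},
\]
valid for any right-continuous decreasing $g:[0,\infty)\to[0,\infty]$, immediately forces $g=f^*$ once $g$ is assumed equimeasurable with $f$. The structural step (super-level sets of $g$ are intervals $[0,a_\alpha)$, hence have measure $a_\alpha$) and the use of $g(t^+)=g(t)=\sup_{s>t}g(s)$ are both handled cleanly, and your remark about the convention $\inf\emptyset=+\infty$ correctly disposes of the degenerate cases.

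One small point you pass over: for the existence half you invoke $f^*$ and Item~$(i)$ of the preceding proposition, but you do not verify that $f^*$ itself is decreasing and right-continuous. Both follow routinely from the definition (monotonicity of $\mu_f$ gives the first; a short $\varepsilon$-argument using that $\{\alpha:\mu_f(\alpha)\leq t\}$ is up-closed gives the second), and it would be worth one sentence to close that loop.
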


\gap

{\bf Example} Consider the measure space $(\Omega, \Sigma, \mu) = (\mathbb{N}, 2^{\mathbb{N}}, \mu)$, where $\mu$ is  the counting measure on $\mathbb{N}$. Then, any $\Sigma$-measurable function $f : \Omega \to \R$ can be realized as a sequence $(x_n)$. The decreasing rearrangement $f^*$ is a function defined on $[0, \infty)$, but  can be interpreted as a sequence $(x_n^*)$, where, for any $n\in \mathbb{N}$, 
\[ x_n^* := f^*(t) \;\; \text{for} \;\; n-1 \leq t < n. \]
Formally, for any $n$,

\[ x_n^* = \inf \{ \alpha \geq 0 : \mu \big( \{k \in \mathbb{N} : \abs{x_k} > \alpha \} \big) \leq n-1 \}. \]

The following can easily be observed:
\begin{itemize}
	\item [$(a)$] If $(x_n)$ has finite number of nonzero entries, say, $k$ of them, then 
	$x^*=(|x_{n_1}|,|x_{n_2}|,\ldots, |x_{n_k}|, 0, 0, \ldots)$, with $|x_{n_1}| \geq |x_{n_2}| \geq \cdots \geq |x_{n_k}|$.
	\item [$(b)$] If $(x_n)$ has infinitely many nonzero entries, then $x^*$ consists of absolute values of these entries arranged in the decreasing order; in particular, every entry of $x^*$  is nonzero.
\end{itemize}
For example, if $x = \Big( 1, 0, \frac{1}{2}, 0, \frac{1}{3}, 0, 0, \ldots \Big)$, then $x^* = \Big( 1, \frac{1}{2}, \frac{1}{3}, 0, 0, \ldots \Big)$.
On the other hand, if $x = \Big( 1, 0, \frac{1}{2}, 0, \frac{1}{3}, 0, \frac{1}{4}, 0, \ldots \Big)$, then $x^* = \Big( 1, \frac{1}{2}, \frac{1}{3}, \frac{1}{4}, \ldots \Big)$.

\section*{Acknowledgment}

We wish to record our appreciation to Roman Sznajder and Michael Orlitzky for their comments and suggestions on an earlier draft of the paper. The work of J.~Jeong was supported by the National Research Foundation of Korea NRF-2016R1A5A1008055 and the National Research Foundation of Korea NRF-2021R1C1C2008350. 

%%%%%%%%%%%%%%%%%%%%%%%%%%%%%%%%%%%%%%%%%%%%%%%%%%%%%%%%%%%%%%%%%%%%%%%%%%

\end{document}